\numberwithin{equation}{section}
\newtheorem{theorem}{Theorem}
\numberwithin{theorem}{section}
\newtheorem{problem}[theorem]{Problem}
\newtheorem{lemma}[theorem]{Lemma}
\newtheorem{proposition}[theorem]{Proposition}
\newtheorem{conjecture}[theorem]{Conjecture}
\newtheorem{remark}[theorem]{Remark}
\theoremstyle{definition}
\newcommand{\R}{\mathbb{R}}
\newcommand{\Q}{\mathbb{Q}}
\newcommand{\dd}{\mathrm{d}}
\newcommand{\N}{\mathrm{N}}
\newcommand{\Res}{\mathrm{Res}}
\newcommand{\kn}{\mathfrak{n}}
\newcommand{\kp}{\mathfrak{p}}
\newcommand{\kq}{\mathfrak{q}}
\newcommand{\km}{\mathfrak{m}}
\newcommand{\kd}{\mathfrak{d}}
\renewcommand{\Re}{\mathop{\mathrm{Re}}}
\renewcommand{\tilde}{\widetilde}
\begin{document}
	\title{On Hypothesis H of Rudnick and Sarnak}
	
	\author{Yujiao Jiang}
	
	\address{Yujiao Jiang\\
		School of Mathematics and Statistics
		\\
		Shandong University
		\\
		Weihai
		\\
		Shandong 264209
		\\
		China}
	\email{yujiaoj@sdu.edu.cn}

	\date{\today}

\begin{abstract}
We prove Hypothesis H in full generality for ${\rm GL}_n$ over any number field. This result is a consequence of our stronger effective bound on Euler products involving Rankin--Selberg coefficients at prime ideal powers. The proof rests on a new analytic method, which employs a power sieve over number fields and an iterative argument to bypass the functoriality barrier that had restricted prior results to $n\leq 4$. As applications, we unconditionally establish the GUE statistics for automorphic $L$-function zeros, provide the first effective polynomial bound for the strong multiplicity one problem for coefficients, and resolve the Selberg orthogonality conjecture with stronger error terms.
\end{abstract}
	
\subjclass[2010]{11F66, 11M41, 11N36}
\keywords{Power sieve, Hypothesis H, automorphic forms}
\maketitle

\section{Introduction} 

The generalized Ramanujan conjecture (GRC) is one of the foundational problems in modern number theory. It predicts optimal bounds for the local components of automorphic representations and has far-reaching implications for the analytic theory of automorphic forms and $L$-functions. Although the GRC remains out of reach in general, significant progress has been made in special cases, and various approximations to the GRC have already had substantial impact on analytic number theory. In many situations, these approximations are sufficient to derive deep arithmetic consequences. For further discussion, for example, see \cite{BB-2013, Sarnak-2005}.

To state the conjecture precisely, we introduce some notation. Let $F$ be a number field with ring of integers $\mathcal{O}_F$, absolute norm $\N=\N_{F/\Q}$, and absolute discriminant $D_F$.
Let $\mathbb{A}_{F}$ denote its ring of adeles. For an integer $n\geq 1$, let $\mathfrak{F}_n$ be the set of cuspidal automorphic representations $\pi$ of $\mathrm{GL}_n(\mathbb{A}_{F})$ whose
central character $\omega_{\pi}$ is unitary. For each $\pi\in\mathfrak{F}_n$, the associated standard $L$-function is defined by the Euler product 
\[
L(s,\pi)=\prod_{\kp} \prod_{j=1}^{n}\Big(1-\frac{\alpha_{j,\pi}(\kp)}{\N\kp^{s}}\Big)^{-1}
\]
for $\Re s>1$, where the $\alpha_{j,\pi}(\kp)$ are suitable complex numbers. The GRC predicts that for every prime ideal $\kp$ at which $\pi_{\kp}$ is unramified, the Satake parameters satisfy 
\[
|\alpha_{j,\pi}(\kp)| = 1 \quad\text{for all }\, j = 1, \ldots, n.
\]
This conjecture is only known in certain cases. For instance, when $n = 1$, the representation $\pi$ is a unitary Hecke character, and then the conjecture holds unconditionally. Another well-known case is when $F=\Q$ and $n = 2$, in which $\pi$ corresponds to a classical holomorphic cusp form of even integral weight. In this setting, the conjecture follows from Deligne's proof of the Weil conjecture \cite{Deligne-1974}. However, the general case remains an open problem. The full strength of the GRC is often unnecessary for a wide range of analytic applications. Weaker forms, particularly those controlling individual or average behavior of the Satake parameters, are often sufficient. A notable example of such an approximation, introduced by Rudnick and Sarnak \cite{RS-1996}, is the following hypothesis concerning second moments of coefficients associated with $L(s,\pi)$ at prime powers.  While their original formulation was over $\Q$, it can be easily extended to any number field $F$.

\vskip 2mm
\noindent \textbf{Hypothesis H}. For any fixed $k \geq 2$ and any fixed $\pi\in \mathfrak{F}_n$,
\[
\sum_{\kp} \frac{(\log \N\kp)^2|a_\pi(\kp^k)|^2}{\N\kp^{k}}<\infty,
\]
where $a_\pi(\kp^k)=\sum_{j=1}^n \alpha_{j,\pi}(\kp)^{k}$.
\begin{remark}
	~
\begin{enumerate}[label=\arabic*.]
	\item Hypothesis H is a direct consequence of the GRC.
	\item Hypothesis H is only known to hold for automorphic $L$-functions of small degree. While the initial verifications were carried out over $F=\Q$, the arguments used can be adapted without difficulty to prove the same results for any general number field $F$. For instance,
    \begin{itemize} 
        \item For $n=2$ and $n=3$, the hypothesis follows from the Rankin--Selberg theory.
        \item For $n=4$, it was established by Kim through the functoriality of the exterior square lift $\wedge^2 \pi$ of a cuspidal representation $\pi$ of $\mathrm{GL}_4(\mathbb{A}_{\mathbb{Q}})$.
        \item Hypothesis H has also been verified in specific cases in $\mathrm{GL}_5(\mathbb{A}_{\mathbb{Q}})$ and $\mathrm{GL}_6(\mathbb{A}_{\mathbb{Q}})$, such as the symmetric fourth power $\operatorname{sym}^4 \pi$ of a cuspidal representation $\pi$ of $\mathrm{GL}_2(\mathbb{A}_{\mathbb{Q}})$, which is an automorphic representation of $\mathrm{GL}_5(\mathbb{A}_{\mathbb{Q}})$, and automorphic representations $\pi_2 \boxtimes \pi_3$ and $\wedge^2 \pi_4$ of $\mathrm{GL}_6(\mathbb{A}_{\mathbb{Q}})$, where $\pi_d$ is a cuspidal representation of $\mathrm{GL}_d(\mathbb{A}_{\mathbb{Q}})$. These two cases are due to the works of Kim \cite{Kim-2006} and Wu--Ye \cite{WY-2007}, respectively.
    \end{itemize}
	\end{enumerate}
\end{remark}

Hypothesis H, though technical in its formulation, has emerged as a powerful tool in the analytic study of automorphic $L$-functions. It provides effective control over the coefficients associated with these functions at higher prime powers. In particular, Hypothesis H is used to bound the contribution from prime powers in various summations, ensuring convergence and enabling precise asymptotics in many problems.

A central reason for its significance is that Hypothesis H can often substitute for the GRC in analytic applications. The second moment estimates that it provides are frequently sufficient to obtain unconditional results on deep problems related to automorphic forms and their $L$-functions.

One of the most influential applications of Hypothesis H appears in the work of Rudnick and Sarnak \cite{RS-1996}, where they studied the $n$-level correlations of the zeros of automorphic $L$-functions. Under the assumption of Hypothesis H, they established that the statistical distribution of these zeros agrees with the predictions for the Gaussian unitary ensemble from random matrix theory. 

Moreover, Hypothesis H yields arithmetic estimates that contribute to progress on several deep arithmetic problems, such as the Selberg orthogonality conjecture in the automorphic context. In this direction, it has been a key assumption in the work of Liu and Ye on orthogonality relations \cite{LY-2005,LY-2006}, in moment estimates of automorphic $L$-functions by Milinovich and Turnage-Butterbaugh \cite{MT-2014}, and in the study of extreme central values of $L$-functions in families by Heap and Li \cite{HL-2024}.

Through these and other applications, Hypothesis H has proven to be a flexible and effective analytic input in the study of automorphic forms. It allows one to bypass the GRC while still achieving unconditional results in problems involving moments, orthogonality, and value distributions of automorphic $L$-functions.

In this paper, we establish Hypothesis H in full generality for all $\pi \in \mathfrak{F}_n$, thereby removing this technical hypothesis from a wide range of applications.

\begin{theorem}\label{thm-HH}
	Hypothesis H is true for all $n\geq 1$.
\end{theorem}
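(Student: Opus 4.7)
\begin{proofsection}{Proof Proposal}

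My plan is to derive Hypothesis~H from the Rankin--Selberg theory of $\pi\times\tilde\pi$ alone, without invoking any functorial lift of $\pi$. The starting point is the identity
\[
\log L(s,\pi\times\tilde\pi)=\sum_{\kp}\sum_{m\geq 1}\frac{|a_\pi(\kp^m)|^2}{m\,\N\kp^{ms}},\qquad\Re s>1,
\]
which follows from $|a_\pi(\kp^m)|^2=\sum_{i,j}(\alpha_{i,\pi}(\kp)\bar\alpha_{j,\pi}(\kp))^m$. Combined with the Newton--Girard relation and the positivity of the Dirichlet coefficients $\lambda_{\pi\times\tilde\pi}(\kp^m)=\sum_{\mu\vdash m}|s_\mu(\alpha_\pi(\kp))|^2\geq 0$ (Cauchy identity), one gets the basic pointwise inequality
\[
|a_\pi(\kp^k)|^2\leq k\,\lambda_{\pi\times\tilde\pi}(\kp^k),\qquad k\geq 1,
\]
so Hypothesis~H is implied by the effective estimate
\[
\sum_{\kp}\frac{\lambda_{\pi\times\tilde\pi}(\kp^k)(\log\N\kp)^2}{\N\kp^k}<\infty\qquad\text{for each fixed }k\geq 2.
\]
It is this statement on Rankin--Selberg Euler products that I would prove.

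The principal analytic input is the prime ideal theorem for $\pi\times\tilde\pi$ with an effective error, coming from the Jacquet--Piatetski-Shapiro--Shalika theory together with Jacquet--Shalika non-vanishing: $\sum_{\N\kn\leq X}\Lambda_{\pi\times\tilde\pi}(\kn)=X+O_A(X(\log X)^{-A})$. Feeding this into a power sieve over $\mathcal O_F$ --- realised as a Perron-type transform with a test function $\Phi_k$ that detects $\kn=\kp^k$ and whose residues at $\kn=\kp^j$ for $j<k$ are controlled by previously obtained estimates --- produces quantitative bounds on $\sum_{\N\kp\leq Y}\lambda_{\pi\times\tilde\pi}(\kp^k)$. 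Adapting the sieve to a number field requires decomposing primes of $\mathcal O_F$ via Hecke characters of bounded conductor, so that their orthogonality relations drive the argument; ramified primes and archimedean Euler factors enter only through explicit constants.

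The induction is on $k$. For $k=2$ one uses Newton's identity $|a_\pi(\kp^2)|^2=2\lambda_{\pi\times\tilde\pi}(\kp^2)-|a_\pi(\kp)|^4$ and the prime ideal theorem to obtain an effective bound on partial sums of $\lambda_{\pi\times\tilde\pi}(\kp^2)$. For $k\geq 3$, the sieve error terms factor through the lower-level sums $\sum_{\N\kp\leq Y}\lambda_{\pi\times\tilde\pi}(\kp^j)$ with $j<k$, which are under control by the induction hypothesis at those levels. Tracking the constants uniformly, one reaches a threshold $k_0=k_0(n,F)$ past which the pointwise Ramanujan-type bound $|a_\pi(\kp^k)|^2\leq n^2\,\N\kp^{2k\theta}$, with $\theta<1/2-1/(n^2+1)$ by Luo--Rudnick--Sarnak, closes the remaining cases geometrically. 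The induction therefore terminates after finitely many steps, yielding Hypothesis~H in full.

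The main obstacle, I expect, is keeping the sieve errors summable across the induction. The naive estimate from Ramanujan-type bounds alone fails for $n\geq 2$ at small $k$, so the argument must recover cancellation from the Rankin--Selberg $L$-function itself through a careful contour shift that exploits its polynomial growth in vertical strips. A second delicate point is the passage from the Chebyshev-type partial-summation estimate on $\lambda_{\pi\times\tilde\pi}(\kp^k)$ to the weighted tail demanded by Hypothesis~H: this requires the effective error in the prime ideal theorem to have at least a polynomial-logarithmic saving, uniform in $k$. It is this effective uniformity that distinguishes the present approach from the earlier functoriality-based proofs and permits its application to all $n$.

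\end{proofsection}
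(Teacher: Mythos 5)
Your reduction is sound and matches the paper's: Hypothesis H is deduced from convergence of $\sum_{\kp}\lambda_{\pi\times\tilde{\pi}}(\kp^k)(\log\N\kp)^2\N\kp^{-k}$ via the positivity of the Rankin--Selberg coefficients and the recursion $k\lambda_{\pi\times\tilde{\pi}}(\kp^k)=\sum_{l\le k}a_{\pi\times\tilde{\pi}}(\kp^l)\lambda_{\pi\times\tilde{\pi}}(\kp^{k-l})$, and your appeal to the Luo--Rudnick--Sarnak bound to close the cases $k$ large (in the paper, $k\ge n^2+2$) is exactly what is done. The gap is in the analytic engine you propose for fixed small $k$. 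Your ``principal analytic input'' is the prime ideal theorem for $L(s,\pi\times\tilde{\pi})$, but that theorem controls only the aggregate $\sum_{\N\kn\le X}\Lambda_F(\kn)a_{\pi\times\tilde{\pi}}(\kn)\sim X$ over \emph{all} prime-ideal powers; by positivity it yields for the $k$-th-power subsum only the trivial bound $\sum_{\N\kp\le Y}\lambda_{\pi\times\tilde{\pi}}(\kp^k)\ll Y^k$, which after partial summation gives $\sum_{\kp}\lambda_{\pi\times\tilde{\pi}}(\kp^k)(\log\N\kp)^2\N\kp^{-k}\ll\int (\log Y)^2 Y^{-1}\,\dd Y=\infty$. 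Indeed, the impossibility of separating the exponent-$k$ stratum from the main term of the prime ideal theorem is precisely why Hypothesis H has been open for $n\ge 5$; no ``Perron-type test function $\Phi_k$'' can detect the multiplicative condition $\kn=\kp^k$, since Mellin/Perron kernels only see $\N\kn$.

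What actually makes the sieve run in the paper are two ingredients absent from your sketch. First, the detection of $k$-th powers is achieved not by a transform but by an explicit family of primitive ray class characters $\chi_{\kq}$ of \emph{exact order} $k$, constructed for auxiliary primes $\kq$ splitting completely in $F(\zeta_k,\sqrt[k]{\mathcal{O}_F^\times})$ (so that $\chi_{\kq}(\mathfrak{b}^k)=1$), with an effective Chebotarev lower bound guaranteeing enough such $\kq$. Second, and decisively, the resulting character-twisted sums $\sum_{\kn}\lambda_{\pi\times\tilde{\pi}}(\kn)\psi(\kn)V(\N\kn/X)$ are bounded using the analytic continuation, functional equation, and \emph{convexity bound} for the twisted Rankin--Selberg $L$-functions $L(s,\pi\times\tilde{\pi}\otimes\psi)$; balancing the sieve parameter then produces the genuine power saving $\sum_{\N\kn\le X}\lambda_{\pi\times\tilde{\pi}}(\kn^k)\ll_{\pi} X^{k(1-\frac{1}{n^2+1}+\varepsilon)}$, which is what Hypothesis H needs (a saving of three logarithms would already suffice, but nothing short of the twisted $L$-function input produces even that). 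Your proposal never invokes these twists, and the induction on $k$ you describe (level-$k$ errors controlled by levels $j<k$) has no counterpart in, and no evident substitute for, this mechanism: the paper treats each $k$ independently. As written, the proposal therefore does not close.
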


However, in some applications, Hypothesis H does not suffice. For example, in the study of log-free zero density estimates for automorphic $L$-functions over $F=\Q$, Michel and Kowalski \cite{KM-2002} introduced a stronger hypothesis, namely that $|\alpha_{j,\pi}(\kp)| \leq \N\kp^{1/4 -\delta}$ for some $\delta > 0$. Subsequently, also working over the rational field, Brumley, Thorner, and Zaman \cite{BTZ-2022} proposed a more flexible hypothesis involving the infinite product
\begin{equation}\label{eq-hypotheis-BTZ}
    \prod_{\kp} \sum_{k=0}^\infty \frac{\max_{1\leq j\leq n} |\alpha_{j,\pi}(\kp)|^{2k}}{\N\kp^{k(1+\varepsilon)}} \ll C(\pi)^{\varepsilon},
\end{equation}
which holds for $n \leq 4$ when $F=\Q$ and was extended to general number fields in \cite[Lemma 7.1]{HT-2022}. Here $C(\pi)$ is the analytic conductor of $\pi$ (see \eqref{eq-conductor} for the definition). Using a straightforward estimation together with the Satake parameter bound \eqref{eq-LRS-finite} due to Luo--Rudnick--Sarnak, one sees that this condition is in fact equivalent to the following
\[
\prod_{\kp} \left(1 + \frac{\max_{1\leq j\leq n} |\alpha_{j,\pi}(\kp)|^2}{\N\kp^{1+\varepsilon}} \right) \ll C(\pi)^\varepsilon.
\]
The above hypothesis plays a central role in their work on large sieve inequalities for $\mathrm{GL}_n$ and in establishing log-free zero density estimates for families of Rankin--Selberg $L$-functions. It was also used by Humphries and Thorner \cite{HT-2022} to obtain the asymptotic formula for prime number theorem in short intervals for automorphic forms on $\mathrm{GL}_n$. In these applications, a key technical requirement is the ability to make the dependence on the conductor $C(\pi)$ explicit and uniform across families, which is a feature not addressed by Hypothesis H.

In this paper, we establish a new bound that not only implies Hypothesis H but also suffices for the aforementioned applications where Hypothesis H is inadequate. Our result provides refined control over certain infinite products involving coefficients of Rankin--Selberg $L$-functions, with an explicit and uniform dependence on the conductor. To state our main theorem precisely, we now introduce the necessary notation. Let $\tilde{\pi}$ denote the contragredient of $\pi \in \mathfrak{F}_n$, which also belongs to $\mathfrak{F}_n$. Let $L(s, \pi \times \tilde{\pi})$ be the associated Rankin--Selberg $L$-function. We write
\[
L(s, \pi \times \tilde{\pi}) = \sum_{\kn} \frac{\lambda_{\pi \times \tilde{\pi}}(\kn)}{\N\kn^{s}}, \quad 
- \frac{L'}{L}(s, \pi \times \tilde{\pi}) = \sum_{\kn} \frac{\Lambda_F(\kn) a_{\pi \times \tilde{\pi}}(\kn)}{\N\kn^{s}},
\]
where the $\kn$ runs over the nonzero integral ideals of $\mathcal{O}_F$, and $\lambda_{\pi \times \tilde{\pi}}(\kn)$ and $\Lambda_F(\kn) a_{\pi \times \tilde{\pi}}(\kn)$ denote the Dirichlet coefficients of the Rankin--Selberg $L$-function and its logarithmic derivative, respectively.

\begin{theorem}\label{thm-main}
	For any $\varepsilon>0$ and any $\pi \in \mathfrak{F}_n$, we have
	\[
	\prod_{\kp}\left(1+\sum_{k=2}^{\infty} \frac{\lambda_{\pi\times\tilde{\pi}}(\kp^k)}{\N\kp^{k\sigma}}\right)\ll C(\pi)^{\varepsilon},
	\]
	and 
	\[
	\prod_{\kp}\left(1+\sum_{k=2}^{\infty} \frac{a_{\pi\times\tilde{\pi}}(\kp^k)}{\N\kp^{k\sigma}}\right)\ll C(\pi)^{\varepsilon}
	\]
	for any $\sigma \geq 1-\frac{1}{n^2+1}+\varepsilon$, where $C(\pi)$ is the analytic conductor of $\pi$, and the implied constants depend only on $[F:\Q],n$ and $\varepsilon$.
\end{theorem}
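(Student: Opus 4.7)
The plan is to reduce both product bounds to a single additive estimate, dispose of the large-$k$ tail by the Luo--Rudnick--Sarnak pointwise bound, and attack the remaining short range of $k$ by the new power-sieve induction.

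\textbf{Reduction.} At unramified primes $\kp$ both Rankin--Selberg coefficients are nonnegative: $\lambda_{\pi\times\tilde\pi}(\kp^k) = h_k\bigl((\alpha_{i,\pi}(\kp)\overline{\alpha_{j,\pi}(\kp)})_{i,j}\bigr) \geq 0$ and $a_{\pi\times\tilde\pi}(\kp^k) = |a_\pi(\kp^k)|^2 \geq 0$. Setting $h_i = \lambda_{\pi\times\tilde\pi}(\kp^i)$ and $p_i = a_{\pi\times\tilde\pi}(\kp^i)$, Newton's identity $k h_k = \sum_{i=1}^{k} p_i h_{k-i}$ with all terms nonnegative yields $p_k \leq k h_k$, so the second product bound follows from the first with only a combinatorial loss. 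Ramified primes number at most $O(\log C(\pi)/\log\log C(\pi)) = o_\varepsilon(\log C(\pi))$, each contributing $O(1)$ to the log via the pointwise Satake bound; using $\log(1+x) \leq x$ for $x \geq 0$ reduces the theorem to
\[
\sum_{\kp \text{ unr.}} \sum_{k\geq 2} \frac{\lambda_{\pi\times\tilde\pi}(\kp^k)}{\N\kp^{k\sigma}} \leq \varepsilon \log C(\pi) + O_{n,\varepsilon}(1).
\]

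\textbf{Large-$k$ tail.} The Luo--Rudnick--Sarnak bound $|\alpha_{j,\pi}(\kp)| \leq \N\kp^{\theta_n}$ with $\theta_n = \tfrac{1}{2} - \tfrac{1}{n^2+1}$ yields $\lambda_{\pi\times\tilde\pi}(\kp^k) \leq \binom{n^2+k-1}{k}\N\kp^{2k\theta_n}$. Since $\sigma - 2\theta_n \geq \tfrac{1}{n^2+1} + \varepsilon$, the exponent $k(2\theta_n - \sigma)$ falls below $-1$ as soon as $k \geq n^2+1$, and the contribution from $k \geq n^2+1$ is $O_{n,\varepsilon}(1)$ unconditionally.

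\textbf{Small $k$ by power-sieve induction.} The genuine content is the finite range $2 \leq k \leq n^2$, which I would handle by induction on $k$. The base case $k=1$ is supplied by the zero-free region for $L(s, \pi\times\tilde\pi)$ and partial summation: $\sum_{\kp} \lambda_{\pi\times\tilde\pi}(\kp)/\N\kp^\sigma \ll_\varepsilon \log C(\pi)$. The inductive step rests on the new power-sieve inequality for sums over prime ideals of $\mathcal{O}_F$. Expanding $\lambda_{\pi\times\tilde\pi}(\kp^k)$ as a positive symmetric polynomial of degree $k$ in the $n^2$ quantities $\alpha_{i,\pi}(\kp)\overline{\alpha_{j,\pi}(\kp)}$, one splits each monomial as a product of two lower-degree factors of degrees $k_1$ and $k-k_1$, applies Cauchy--Schwarz over $\kp$, and invokes the power sieve together with the pointwise Satake bound to close the iteration. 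Each round trades one factor of $\N\kp^{\sigma-2\theta_n}$ against the trivial bound, so after $O_n(1)$ rounds the level-$k$ estimate is reduced to estimates at levels $\geq n^2+1$, which are handled by the tail. Crucially, the argument never leaves the degree-$n^2$ world of $L(s, \pi\times\tilde\pi)$: no automorphy of $\pi\times\pi$, $\wedge^2\pi$, or any higher Rankin--Selberg convolution is used, which is precisely what removes the previous restriction $n\leq 4$.

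\textbf{Main obstacle.} The decisive step is $k=2$, where the trivial bound fails by the widest margin and (for $n \geq 5$) no stronger automorphic input is available. All cancellation must be extracted from $L(s, \pi\times\tilde\pi)$ and the Luo--Rudnick--Sarnak bound through the sieve alone. The most delicate bookkeeping is to calibrate the $k$-iteration so that it preserves the conductor saving $C(\pi)^\varepsilon$ rather than accumulating an extra power of $\log C(\pi)$ at each step, and to do so uniformly over the base field $F$. That uniformity --- which is why the sieve is described as operating ``over number fields'' --- is, I expect, the crucial technical novelty underpinning the whole proof.
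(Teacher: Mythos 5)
Your reduction steps are sound and agree with the paper: the tail $k\geq n^2+2$ is handled by the Luo--Rudnick--Sarnak bound exactly as you describe, and the second product bound does follow from the first via the nonnegativity of $a_{\pi\times\tilde{\pi}}(\kp^k)$ and the recursion $\nu\lambda_{\pi\times\tilde{\pi}}(\kp^\nu)=\sum_{l}a_{\pi\times\tilde{\pi}}(\kp^l)\lambda_{\pi\times\tilde{\pi}}(\kp^{\nu-l})$. The problem is that the core of the proof, the range $2\leq k\leq n^2+1$, is not actually supplied, and the sketch you give for it would not close.

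First, you misplace where the cancellation comes from. The power sieve does not operate on sums over prime ideals: it detects $k$-th power ideals inside a sum over \emph{all} integral ideals, converting $\sum_{\kn}\lambda_{\pi\times\tilde{\pi}}(\kn^k)V(\N\kn^k/X)$ into sums $\sum_{\kn}\lambda_{\pi\times\tilde{\pi}}(\kn)\psi_{\kp\kq}(\kn)V(\N\kn/X)$ twisted by order-$k$ ray class characters, and the saving is then extracted from the convexity bound for the twisted Rankin--Selberg $L$-function $L(s,\pi\times\tilde{\pi}\otimes\psi_{\kp\kq})$ via Mellin inversion. Your sketch (splitting monomials into lower-degree factors, Cauchy--Schwarz over $\kp$, then ``the power sieve together with the pointwise Satake bound'') never introduces any twisted $L$-function and hence has no source of cancellation; Cauchy--Schwarz and the pointwise Satake bound alone cannot beat the trivial estimate. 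Moreover, ``reducing the level-$k$ estimate to levels $\geq n^2+1$'' by splitting a degree-$k$ monomial into \emph{lower}-degree pieces goes in the wrong direction, and there is in fact no induction on $k$ anywhere in the argument: each $k$ is treated independently with its own family of order-$k$ characters, so your base case $k=1$ plays no role.

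Second, and more fundamentally, you have no mechanism for the conductor aspect, which you yourself flag as ``the most delicate bookkeeping''. The sieve-plus-convexity step produces an initial bound of the shape $\sum_{\kn}\lambda_{\pi\times\tilde{\pi}}(\kn^k)\N\kn^{-k\sigma_0}\ll C(\pi)^{n+O_{k,[F:\Q]}(1)}$, a large fixed power of the conductor, nowhere near $C(\pi)^{\varepsilon}$. The paper's essential second ingredient is an Iwaniec-type iteration: restrict to square-free ideals, use multiplicativity of $\lambda_{\pi\times\tilde{\pi}}$ to derive $D(k\sigma)^2\ll C(\pi)^{\varepsilon}D(k\sigma-\varepsilon)$, iterate $O(\log(1/\varepsilon))$ times, and take a $2^l$-th root to drive the conductor exponent down to $\varepsilon$. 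This step is entirely absent from your proposal, and without it (or a substitute) the stated bound $\ll C(\pi)^{\varepsilon}$ cannot be reached.
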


\begin{remark}
   Compared to Hypothesis H which is expressed in terms of the arithmetic function $|a_\pi(\kn)|^2$, our theorem is formulated using the Rankin--Selberg coefficients $\lambda_{\pi \times \tilde{\pi}}(\kn)$, and in fact also covers the coefficients  $a_{\pi \times \tilde{\pi}}(\kn)$ of the logarithmic derivative. This formulation proves more flexible in analytic applications, since many arithmetic quantities of interest can be controlled using $\lambda_{\pi \times \tilde{\pi}}(\kn)$ or $a_{\pi \times \tilde{\pi}}(\kn)$. See the inequalities in Section 3 for some examples.

However, we emphasize that our bound does not imply the stronger hypothesis \eqref{eq-hypotheis-BTZ} introduced by Brumley, Thorner, and Zaman, which involves a refined infinite sum over the Satake parameters and is currently known only for $n \leq 4$. 
\end{remark}

\textbf{Proof of Theorem~\ref{thm-HH}.}
As a consequence, we now deduce Hypothesis H as a corollary of our main result. Recall the elementary inequality
\begin{equation}\label{eq-ineq-sumprod}
    \sum_{\kp} x_{\kp}\leq \prod_{\kp}(1+x_{\kp}),
\end{equation}
which holds for any sequence $\{x_{\kp}\}_{\kp}$ of non-negative real numbers. The second assertion in Theorem~\ref{thm-main}, combined with the inequality \eqref{eq-ineq-sumprod} with $x_{\kp}=\sum_{k=2}^{\infty} \frac{a_{\pi\times\tilde{\pi}}(\kp^k)}{\N\kp^{k\sigma}}$, we get
\[
\sum_{\kp}\sum_{k=2}^{\infty} \frac{a_{\pi\times\tilde{\pi}}(\kp^k)}{\N\kp^{k\sigma}}\ll C(\pi)^{\varepsilon}
\]
for any $\sigma \geq 1-\frac{1}{n^2+1}+\varepsilon$. Theorem~\ref{thm-HH} follows immediately from this and the inequality~\eqref{coeffpair}.

\vskip 2mm
\subsection{Strategy of the proof and comparison with previous methods}

The proof of our main theorem rests upon a new analytic framework that differs fundamentally from prior work \cite{RS-1996,Kim-2006,WY-2007} on Hypothesis H. To motivate our approach, we begin by outlining the previous strategy and pointing out the main difficulty that prevented further progress.

The previous approach to establishing Hypothesis H relied on obtaining pointwise control over the maximal Satake parameter, $\max_{1\leq j\leq n}|\alpha_{j,\pi}(\kp)|$, through an analysis of its parameterization. Such control yields the bound $|a_{\pi}(\kp^k)|^2\leq n^2\max_{1\leq j\leq n}|\alpha_{j,\pi}(\kp)|^{2k}$ for any $k\geq 2$. In the case $n=3$, the unitarity condition uniquely constrains this parameterization, which leads to the estimate
\[
\max_{1\leq j\leq 3}|\alpha_{j,\pi}(\kp)|^2 \ll 1+|a_\pi(\kp)|^{2}.
\]
This inequality provides the necessary pointwise control. The proof is then completed using the Rankin--Selberg theory.

The same strategy for parameterizing the Satake parameters extends to $n=4$, but the analysis becomes more complex. The parameterization leads to several distinct cases. For the more intricate of these, the standard bounds towards the GRC for $\mathrm{GL}_4$ are insufficient. The key insight of Kim \cite{Kim-2006} was to employ the functoriality of the exterior square lift $\Lambda^2: \mathrm{GL}_4 \to \mathrm{GL}_6$. By applying the bound \eqref{eq-LRS-finite} for $\mathrm{GL}_6$ (due to Luo, Rudnick and Sarnak) to this lift, he established the estimate
\[
\max_{1\leq j\leq 4}|\alpha_{j,\pi}(\kp)|^2 \ll p^{\frac{1}{2}-\frac{1}{37}}+|a_\pi(\kp)|^{2}.
\]
This estimate is sufficient to handle all cases and thus completes the proof of Hypothesis H for $n=4$.

The principle of using lifts to achieve pointwise control was developed by Brumley \cite{Brumley-2006-arch}, who proved the estimate
\[
\max_{1\leq j\leq n}|\alpha_{j,\pi}(\kp)|^2 \ll_n 1+|a_\pi(\kp)|^{2}+\sum_{\nu=2}^{\lfloor n/2 \rfloor} |a_{\Lambda^{\nu}\pi}(\kp)|^{\frac{2}{\nu}},
\]
where $\Lambda^{\nu}\pi$ denotes the $\nu$-th exterior power lift of $\pi$. This inequality makes the obstruction explicit. To bound the maximal Satake parameter on the left hand side, one must control the coefficients of the higher exterior power lifts on the right hand side. For $n \geq 5$, however, this requires the automorphy of these lifts, which is an unproven part of the Langlands program. This functoriality barrier has prevented any significant progress for nearly two decades.

In contrast, our approach bypasses the strategy of pointwise control. We introduce a global and analytic method that directly estimates the average of the Rankin--Selberg coefficients $\lambda_{\pi\times\tilde{\pi}}(\kn^k)$  with $k\geq 2$. The method proceeds in three steps. First, the power sieve reduces the problem to estimating sums twisted by characters. We establish an initial estimate for these sums by applying the convexity bound for the twisted Rankin--Selberg $L(s, \pi \times \tilde{\pi} \otimes \psi)$.  Second, we employ a robust iterative procedure inspired by Iwaniec \cite{Iwaniec-1990}. This procedure systematically refines the initial estimate, reducing the exponent of the dependence on the conductor to be arbitrarily small. Finally, the analysis is completed by combining our new bound with the pointwise estimate of Luo, Rudnick and Sarnak. Our bound is applied for small exponents ($k\leq n^2+1$), while their bound is used for larger exponents.

The analytic theory of the twisted $L$-function $L(s, \pi \times \tilde{\pi} \otimes \psi)$ plays an important role in our method, as it did in the seminal work \cite{LRS-1995,LRS-1999} of Luo, Rudnick and Sarnak on the GRC. However, the two approaches diverge sharply in their application of this theory. Luo, Rudnick and Sarnak used information about the distribution of zeros of this $L$-function to constrain individual parameters. This led to their pointwise bound \eqref{eq-LRS-finite}. In contrast, we use properties of this $L$-function to estimate the character sums produced by the sieve. This estimate then serves as the input for an iterative process, allowing us to control averages rather than individual values.

In essence, our work replaces a dependence on deep conjectural results from the Langlands program with a new framework of analytic machinery.

\vskip 5mm
\textbf{Notation} Throughout the paper, $\varepsilon$ represents an arbitrarily small, positive constant, which may vary from one occurrence to the next. We use $f = O(g), f \ll g, \text{ or } g \gg f$ to denote the bound $|f| \leq C g$ for some positive constant $C$. If this constant $C$ depends on additional parameters then we will indicate this by subscripts. For instance, $f \ll_{\delta,\varepsilon} g$ denotes the bound $|f| \leq C_{\delta,\varepsilon} \,g$ for some $C_{\delta,\varepsilon}$ depending on $\delta, \varepsilon$. In some cases, we omit explicit dependence in the constants, which may depend at most on $n$, $[F:\Q]$, and $\varepsilon$.

\section{Properties of \texorpdfstring{$L$}{L}-functions}

In this section, we give basic notation
and definitions of standard and Rankin--Selberg $L$-functions, and their twists, including their fundamental analytic properties and functional equations. See \cite[Section 1]{Brumley-2006-ajm}, \cite[Lecture 1]{Michel-2007} and \cite[Section 2]{ST-2019} for detailed discussions.

\subsection{Standard $L$-functions}

Let $\pi=\otimes_{v} \pi_v \in \mathfrak{F}_n$ be a cuspidal automorphic representation of $\mathrm{GL}_n(\mathbb{A}_{F})$, where $v$ varies over the places of a number field $F$. Denote by $\kq_{\pi}$ the arithmetic conductor of $\pi$. For each nonarchimedean place $v$, there corresponds a prime ideal $\kp$ of $\mathcal{O}_F$. At each prime ideal $\kp$ such that $\pi_{\kp}$ is unramified, the inverse of the local factor $L(s,\pi_{\kp})$ is defined by a polynomial in $\N\kp^{-s}$ of degree $n$
\[
L(s,\pi_{\kp})^{-1}=\prod_{j=1}^{n}\Big(1-\frac{\alpha_{j,\pi}(\kp)}{\N\kp^{s}}\Big),
\]
where ${\alpha_{1,\pi}(\kp), \ldots, \alpha_{n,\pi}(\kp)}$ are the Satake parameters of $\pi$ at $\kp$, all of which are nonzero complex numbers. At $\kp$ such that $\pi_{\kp}$ is ramified, the local factor $L(s, \pi_{\kp})$ can still be expressed in the same form, with some of the parameters $\alpha_{j,\pi}(\kp)$ possibly equal to zero. The standard $L$-function $L(s,\pi)$ associated to $\pi$ is of the form
\[
L(s,\pi)=\prod_{\kp} L(s,\pi_{\kp})=\sum_{\kn}\frac{\lambda_{\pi}(\kn)}{\N\kn^{s}},
\]
where the $\kn$ (resp. $\kp$) runs over all nonzero integral (resp. prime) ideals of $\mathcal{O}_F$. The Euler product and Dirichlet series converge absolutely for $\Re(s)>1$ by the Rankin--Selberg theory. 

At each archimedean place, there are $n$ complex Langlands parameters $\{\mu_{1, \pi}(v),\ldots,\mu_{n, \pi}(v)\}$ from which we define
\[
L(s,\pi_{\infty}) = \prod_{ v|\infty}\prod_{j=1}^{n}\Gamma_{v}(s+\mu_{j,\pi}(v)),\qquad \Gamma_{v}(s):=\begin{cases}
	\pi^{-s/2}\Gamma(s/2)&\mbox{if $F_{ v}=\R$,}\\
	2(2\pi)^{-s}\Gamma(s)&\mbox{if $F_{ v}=\mathbb{C}$.}
\end{cases}
\]
It is known that there exists a constant
\begin{equation}\label{eq-delta-n}
	\delta_{n} \in \left[0,\frac{1}{2}-\frac{1}{n^2+1}\right]
\end{equation}
such that for all places $\kp,v$ and indices $j$,
\begin{equation}\label{eq-LRS-finite}
	| \alpha_{j,\pi}(\kp)|\leq  \N\kp^{\delta_{n}}\qquad\textup{and}\qquad\Re(\mu_{j, \pi}(v))\geq -\delta_{n}.
\end{equation}
The bounds follow from the works of Luo--Rudnick--Sarnak \cite{LRS-1999} and M\"uller--Speh \cite{MS-2004}. The generalized Ramanujan and Selberg conjectures predict that one may take $\delta_{n}=0$ in \eqref{eq-delta-n}.

Let $\tilde{\pi}$ denote the contragredient of $\pi\in\mathfrak{F}_n$, which is also an irreducible cuspidal automorphic representation in $\mathfrak{F}_n$. For any place $v$ of $F$, $\tilde{\pi}_{v}$ is equivalent to the complex conjugate $\bar{\pi}_{v}$, and hence we have
\[
\big\{\alpha_{j,\tilde{\pi}}(\kp):\ 1\leq j\leq n \big\}=\big\{\overline{ \alpha_{j,\pi}(\kp)}:\ 1\leq j\leq n \big\}
\]
and
\[
\big\{\mu_{j,\tilde{\pi}}(v):\ 1\leq j\leq n \big\}=\big\{\overline{\mu_{j, \pi}(v)}:\ 1\leq j\leq n \big\}.
\]
Let $d(v)=1$ if $F_{v}=\R$ and $d(v)=2$ if $F_{v}=\mathbb{C}$. The analytic conductor of $\pi$ is defined as
\begin{equation}\label{eq-conductor}
C(\pi,t)=D_F^n \N\kq_{\pi}\prod_{ v|\infty}\prod_{j=1}^n(3+|it+\mu_{j,\pi}( v)|^{d( v)}),\qquad C(\pi)=C(\pi,0).
\end{equation}

Define the completed $L$-function by
\[
\Lambda(s,\pi) = (D_F^n \N\kq_{\pi})^{s/2}L(s,\pi_{\infty})L(s,\pi).
\]
Then $\Lambda(s,\pi)$ has an analytic continuation to the entire complex plane, is bounded in vertical strips, and satisfies the functional equation
\[
\Lambda(s,\pi)=W(\pi)\Lambda(1-s,\tilde{\pi}),
\]
where $W(\pi)$ is a complex number of modulus one. As $\pi$ is cuspidal, this continuation is entire.

\subsection{Rankin--Selberg $L$-functions}
We now turn to the Rankin--Selberg $L$-functions. Let $\pi=\otimes_v \pi_{v}\in\mathfrak{F}_n$ and $\pi'=\otimes_v \pi_{v}'\in\mathfrak{F}_{n'}$. The Rankin--Selberg $L$-function $L(s,\pi\times\pi')$ associated to $\pi$ and $\pi'$ is defined by
\[
L(s,\pi\times\pi')=\prod_{\kp}L(s,\pi_{\kp}\times\pi_{\kp}')=\sum_{\kn}\frac{\lambda_{\pi\times\pi'}(\kn)}{\N\kn^{s}}.
\]
Both the Euler product and the Dirichlet series converge absolutely for $\Re(s)>1$. For each prime ideal $\kp$, the inverse of the local factor $L(s,\pi_{\kp}\times\pi_{\kp}')$ is a polynomial in $\N\kp^{-s}$ of degree at most $nn'$
\begin{equation*}
L(s,\pi_{\kp}\times\pi_{\kp}')^{-1}=\prod_{j=1}^{n}\prod_{j'=1}^{n'}\Big(1-\frac{\alpha_{j,j',\pi\times\pi'}(\kp) }{\N\kp^{s}}\Big)
\end{equation*}
for suitable complex numbers $\alpha_{j,j',\pi\times\pi'}(\kp)$.  With $\delta_{n}$ as in \eqref{eq-delta-n}, we have the pointwise bound
\begin{equation}\label{eq-LRS-2}
	|\alpha_{j,j',\pi\times\pi'}(\kp)|\leq  \N\kp^{\delta_{n}+\delta_{n'}}.
\end{equation}
If $\kp\nmid \kq_{\pi}\kq_{\pi'}$ (i.e., if both $\pi$ and $\pi'$ are unramified at $\kp$), then the local parameters satisfy the identity of sets
\begin{equation}\label{eq-RS-unramified}
	\big\{\alpha_{j,j',\pi\times\pi'}(\kp):\  j\leq n,\,  j'\leq n' \big\}
	=\big\{ \alpha_{j,\pi}(\kp)\alpha_{j',\pi'}(\kp):\  j\leq n,\,  j'\leq n'\big\}.
\end{equation}
See Brumley \cite[Appendix]{ST-2019} for a description of $\alpha_{j,j',\pi\times\pi'}(\kp)$ when $\kp|\kq_{\pi}\kq_{\pi'}$.
At each archimedean place $v$ of $F$, there exist $nn'$ complex Langlands parameters $\mu_{j,j',\pi\times\pi'}(v)$ associated to $\pi_{v}$ and $\pi_{v}'$, from which one defines
\[
L(s,\pi_{\infty}\times\pi_{\infty}') = \prod_{ v|\infty}\prod_{j=1}^{n}\prod_{j'=1}^{n'}\Gamma_{v}(s+\mu_{j,j',\pi\times\pi'}(v)).
\]
These parameters satisfy the pointwise bound
\begin{equation*}
	\Re(\mu_{j,j',\pi\times\pi'})\geq-\delta_{n}-\delta_{n'}.
\end{equation*}

Let $\kq_{\pi\times\pi'}$ be the arithmetic conductor of $\pi\times\pi'$. The completed $L$-function
\[
\Lambda(s,\pi\times\pi')=(D_F^{n'n}\N\kq_{\pi\times\pi'})^{s/2}L(s,\pi\times\pi')L(s,\pi_{\infty}\times\pi_{\infty}')
\]
has a meromorphic continuation to $\mathbb{C}$ and is bounded in vertical strips. It is entire unless $\pi'= \tilde{\pi}\otimes |\cdot|^{it_{\pi}}$ for some $t_{\pi} \in \R$, in which case it has simple poles at $s=-it_{\pi}, 1-it_{\pi}$. Furthermore, it satisfies the functional equation
\begin{equation}\label{pi1pi2-fe}
\Lambda(s,\pi\times\pi')=W(\pi\times\pi')\Lambda(1-s,\tilde{\pi}\times\tilde{\pi}'),
\end{equation}
where $W(\pi\times\pi')$ is a complex number of modulus one. 

As with $L(s,\pi)$, we define the analytic conductor of $\pi\times\pi'$ to be
\[
C(\pi\times\pi',t):=D_F^{n'n}\N\kq_{\pi\times\pi'}\prod_{ v|\infty}\prod_{j=1}^n \prod_{j'=1}^{n'}(3+|it+\mu_{j,j',\pi\times\pi'}( v)|^{d( v)}),\qquad C(\pi\times\pi')=C(\pi\times\pi',0).
\]
To decouple the dependencies of $C(\pi\times\pi',t)$ on $\pi$, $\pi'$, and $t$, we cite the result of Bushnell and Henniart \cite[Theorem 1]{BH-1997} and Brumley \cite[Lemma A.2]{HB-2019} which gives
\begin{equation}\label{eq-BH}
	C(\pi\times\pi',t)\leq C(\pi\times\pi')(1+|t|)^{[F:\Q]n'n},\qquad C(\pi\times\pi')\leq e^{O(n'n)} C(\pi)^{n'}C(\pi')^{n}.
\end{equation}

We are especially interested in the case $\pi'=\tilde{\pi}$. In this case the Rankin--Selberg $L$-function $L(s,\pi\times\tilde{\pi})$ has non-negative coefficients $\lambda_{\pi\times\tilde{\pi}}(\kn)$. Moreover, $L(s,\pi\times\tilde{\pi})$ extends to the complex plane with a simple pole at $s = 1$. 


\vskip 2mm

The convexity bound of the Rankin--Selberg $L$-functions is a crucial tool, which we state as follows.

\begin{lemma}\label{lem-convexity}
	Let $\pi\in\mathfrak{F}_n, \pi'\in\mathfrak{F}_{n'}$, and $0<\sigma<1$. For any $\varepsilon > 0$, we have
	\[
	L\Big(\sigma+it, \pi \times  \pi'\Big)\ll C(\pi \times \pi',t)^{\frac{1-\sigma}{2}+\varepsilon},
	\]
	where the implied constant depends only on $[F:\Q],n,n'$ and $\varepsilon$. 
\end{lemma}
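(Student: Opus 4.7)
The plan is to apply the Phragmén--Lindelöf convexity principle to the completed $L$-function across the strip $-\varepsilon \leq \Re(s) \leq 1 + \varepsilon$, after removing any poles.

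First I would bound $L(s, \pi \times \pi')$ on the right edge $\Re(s) = 1 + \varepsilon$. The Dirichlet series converges absolutely there by the Rankin--Selberg theory together with the local parameter bound \eqref{eq-LRS-2}, giving $L(1 + \varepsilon + it, \pi \times \pi') \ll 1$ uniformly in $\pi$, $\pi'$, and $t$. Next, on the left edge $\Re(s) = -\varepsilon$, I would use the functional equation \eqref{pi1pi2-fe} to solve for $L(s, \pi \times \pi')$:
\[
L(s, \pi \times \pi') = W(\pi \times \pi') (D_F^{nn'} \N\kq_{\pi \times \pi'})^{\frac{1}{2} - s} \frac{L(1 - s, \tilde\pi_\infty \times \tilde\pi'_\infty)}{L(s, \pi_\infty \times \pi'_\infty)} L(1 - s, \tilde\pi \times \tilde\pi').
\]
Stirling's formula applied to the archimedean ratio, with the lower bound $\Re(\mu_{j,j',\pi \times \pi'}(v)) \geq -\delta_n - \delta_{n'}$ keeping the gamma arguments away from the poles of $\Gamma$, shows that this ratio is of order $C(\pi \times \pi', t)^{\frac{1}{2} - \Re(s) + \varepsilon}$. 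Combined with the absolute convergence bound applied to $L(1 - s, \tilde\pi \times \tilde\pi')$, this yields $L(-\varepsilon + it, \pi \times \pi') \ll C(\pi \times \pi', t)^{\frac{1}{2} + O(\varepsilon)}$.

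Before applying Phragmén--Lindelöf, I would remove the possible simple poles at $s = -it_\pi$ and $s = 1 - it_\pi$ (in the exceptional case $\pi' = \tilde\pi \otimes |\cdot|^{it_\pi}$) by multiplying $\Lambda(s, \pi \times \pi')$ by the polynomial $(s + it_\pi)(s - 1 + it_\pi)$. The resulting auxiliary function is entire, of finite order, and bounded in vertical strips; the polynomial factor contributes at most $(1 + |t|)^{O(1)}$, which is absorbed into $C(\pi \times \pi', t)^{\varepsilon}$ via \eqref{eq-BH}. Phragmén--Lindelöf then linearly interpolates the boundary exponents from $\frac{1}{2} + \varepsilon$ on the left edge to $\varepsilon$ on the right, producing at $\Re(s) = \sigma$ the claimed bound $C(\pi \times \pi', t)^{\frac{1 - \sigma}{2} + \varepsilon}$.

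The main technical step is the Stirling analysis of the archimedean ratio: I need uniform control in all parameters $\mu_{j,j',\pi \times \pi'}(v)$ and in $|t|$, and the exponent must come out precisely to $\frac{1}{2} - \Re(s)$ so that the interpolation produces the sharp $\frac{1 - \sigma}{2}$. The uniform lower bound on the real parts of the $\mu_{j,j',\pi \times \pi'}(v)$ ensures Stirling's estimate applies cleanly, and the definition of $C(\pi \times \pi', t)$ in \eqref{eq-conductor} is calibrated so that contributions from every archimedean place and every index combine into a single clean power of the analytic conductor.
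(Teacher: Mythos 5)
Your architecture (bound on $\Re s=1+\varepsilon$, functional equation plus Stirling on $\Re s=-\varepsilon$, Phragm\'en--Lindel\"of in between, with the exceptional poles removed by a polynomial factor) is exactly the paper's route, and the Stirling analysis of the archimedean ratio is set up correctly. However, there is one genuine gap: your bound on the right edge. You assert that absolute convergence of the Dirichlet series for $\Re s>1$ (via Rankin--Selberg theory and \eqref{eq-LRS-2}) gives $L(1+\varepsilon+it,\pi\times\pi')\ll 1$ \emph{uniformly in $\pi$ and $\pi'$}. Absolute convergence only gives finiteness for each fixed pair, not uniformity, and the uniform $O(1)$ claim is in fact false: by \eqref{ineq-22rs} and Cauchy--Schwarz the natural bound is $L(1+\varepsilon,\pi\times\tilde{\pi})^{1/2}L(1+\varepsilon,\pi'\times\tilde{\pi}')^{1/2}$, and since $\lambda_{\pi\times\tilde\pi}(\kn)\geq 0$ and $L(s,\pi\times\tilde\pi)$ has a simple pole at $s=1$, one has $L(1+\varepsilon,\pi\times\tilde\pi)\gg \varepsilon^{-1}\Res_{s=1}L(s,\pi\times\tilde\pi)$, and this residue is not uniformly bounded over $\pi\in\mathfrak{F}_n$ (already for $n=2$ it behaves like $L(1,\mathrm{sym}^2 f)$, which is unbounded over the family). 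Note also that the pointwise bound \eqref{eq-LRS-2} alone only gives absolute convergence for $\Re s>1+\delta_n+\delta_{n'}$, so it cannot be the source of control at $1+\varepsilon$. What is actually needed, and what the paper invokes, is Li's theorem \eqref{eq-Liconvexity}, namely $L(\sigma+it,\pi\times\pi')\ll C(\pi\times\pi',t)^{\varepsilon}$ for $\sigma>1$; this is a non-trivial result (it amounts to an edge-of-critical-strip bound resting on the non-negativity of $\lambda_{\pi\times\tilde\pi}$ and the functional equation), not a consequence of convergence. Replacing your $O(1)$ by $C(\pi\times\pi',t)^{\varepsilon}$ costs nothing in the interpolation, so the lemma survives, but as written this step is unjustified.

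Two smaller points. First, the claim that the polynomial factor $(s+it_\pi)(s-1+it_\pi)$ is ``absorbed into $C(\pi\times\pi',t)^{\varepsilon}$'' is not literally correct for large $|t|$, since that factor grows like $(1+|t|)^{2}$, which is a fixed positive power of $C(\pi\times\pi',t)$ rather than an $\varepsilon$-power; one should instead multiply by a bounded ratio such as $\frac{(s+it_\pi)(s-1+it_\pi)}{(s+it_\pi+2)(s+it_\pi+3)}$, which is $\asymp 1$ on the relevant lines, so that nothing needs to be absorbed. Second, since $C(\pi\times\pi',t)$ depends on $\Im s$, the Phragm\'en--Lindel\"of step should formally be run against the majorant $C(\pi\times\pi')(1+|t|)^{[F:\Q]nn'}$ from \eqref{eq-BH} raised to an exponent linear in $\sigma$; this is standard and the paper glosses over it as well.
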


\begin{proof}
Let $s=\sigma+it$. It is known from \cite[Theorem 2]{Li-2010} that the Rankin--Selberg $L$-function $L(s,\pi \times \pi')$ satisfies 
\begin{equation}\label{eq-Liconvexity}
    L(s,\pi \times \pi')\ll C(\pi \times \pi',t)^{\varepsilon}
\end{equation}
for $\sigma>1$. The functional equation \eqref{pi1pi2-fe}, combined with Stirling's estimate for the gamma functions, provides a bound for the $L$-function in the region $\sigma<0$. The desired result then immediately follows by applying the
Phragm\'en--Lindel\"of principle.
\end{proof}

\subsection{Twists}  
Let $\pi\in \mathfrak{F}_n$. Let $\km \subseteq \mathcal{O}_F$ be a nonzero integral ideal of $F$, and let $\psi$ be a primitive ray class character modulo $\km$ with $(\km,\kq_{\pi})=\mathcal{O}_F$. Consider the twisted representation $\pi'=\tilde{\pi}(\psi):=\tilde{\pi}\otimes\psi$.
The associated Rankin--Selberg $L$-function can be written as the Dirichlet series
\[
L(s,\pi\times\tilde{\pi}(\psi))=\sum_{\kn}\frac{\lambda_{\pi\times\tilde{\pi}}(\kn)\psi(\kn)}{\N\kn^{s}}.
\]
The completed $L$-function
\[
\Lambda(s,\pi\times\tilde{\pi}(\psi))= (D_{F}^{n^2} \N\km^{n^2} \N\kq_{\pi\times\tilde{\pi}})^{s/2}L(s,\pi_{\infty}\times\tilde{\pi}_{\infty}(\psi_{\infty})) L(s, \pi\times\tilde{\pi}(\psi))
\]
has an analytic continuation to $\mathbb{C}$ and satisfies the following functional equation
\[
\Lambda(s,\pi\times\tilde{\pi}(\psi))=W(\pi\times\tilde{\pi}(\psi))\Lambda(1-s,\tilde{\pi}\times \pi(\overline{\psi})),
\]
where
\[
L(s,\pi_{\infty}\times\tilde{\pi}_{\infty}(\psi_{\infty}))=\prod_{ v|\infty}\prod_{j=1}^{n}\prod_{j'=1}^{n}\Gamma_{v}(s+\mu_{j,j',\pi\times\tilde{\pi}(\psi)}(v)).
\]
According to the work of M\"uller and Speh \cite[Proof of Lemma 3.1]{MS-2004}, the Langlands parameters $\mu_{j,j',\pi\times\tilde{\pi}(\psi)}$ depend only on $\pi$ and the parity of $\psi$.
Applying the general conductor bound \eqref{eq-BH}, we can estimate the analytic conductor of the twisted $L$-function
\begin{equation*}
C(\pi\times\tilde{\pi}(\psi),t) \ll_{n} C(\pi)^{2n} \N\km^{n^2} (3+|t|)^{[F:\Q]n^2}.
\end{equation*}
Finally, substituting this conductor bound into Lemma \ref{lem-convexity} yields the convexity bound for the twisted $L$-function in the strip $0<\sigma<1$
\begin{equation}\label{eq-twist-convexity}
L(\sigma+it,\pi\times\tilde{\pi}(\psi)) \ll_{[F:\Q],n,\varepsilon} \big(C(\pi)^{2n}\N\km^{n^2} (3+|t|)^{[F:\Q]n^2}\big)^{\frac{1-\sigma}{2}+\varepsilon}.
\end{equation}
\vskip 5mm

\section{Auxiliary inequalities}\label{sec-inequality}

In this section, we introduce several arithmetic inequalities that will be used in subsequent arguments. Let $\pi\in\mathfrak{F}_n$ and $\pi'\in\mathfrak{F}_{n'}$. Consider the logarithmic derivative of the Rankin--Selberg $L$-function, given by
\begin{equation}\label{def-api}
	-\frac{L'}{L}(s,\pi\times\pi')=\sum_{\kp} \sum_{\nu=1}^{\infty} \frac{\sum_{j=1}^{n} \sum_{j'=1}^{n'} \alpha_{j, j', \pi \times \pi'}(\kp)^{\nu} \log \N\kp}{\N\kp^{\nu s}}=\sum_{\kn}\frac{\Lambda_F(\kn)a_{\pi\times\pi'}(\kn)}{\N\kn^{s}}
	\end{equation}
	for $\Re(s)>1$, where 
\begin{equation*}
\Lambda_F(\kn):=\begin{cases}
		\displaystyle\log \N\kp,  &\text{ if }\kn=\kp^k \text{ for some }k\in\mathbb{N}, \\
0, &\text{ otherwise,}
\end{cases}
\end{equation*}
and $a_{\pi\times\pi'}(\kp^k)=\sum_{j=1}^{n} \sum_{j'=1}^{n'} \alpha_{j, j', \pi \times \pi'}(\kp)^{k}$. We set $a_{\pi\times\pi'}(\kn)=0$ if $\kn$ is not a prime ideal power. It is known, by an argument due to Brumley (see \cite[Lemma 2.2 and Appendix]{ST-2019}), that for all $\kn\subseteq \mathcal{O}_F$,
	\begin{equation}\label{a-ineq}
		|a_{\pi\times\pi'}(\kn)|\leq \sqrt{a_{\pi\times\tilde{\pi}}(\kn)a_{\pi'\times{\tilde{\pi}'}}(\kn)}.
	\end{equation}
In particular, taking any $\pi'\in\mathfrak{F}_{1}$ yields
\begin{equation}\label{coeffpair}
	|a_{\pi}(\kn)|^2\leq a_{\pi\times\tilde{\pi}}(\kn).
\end{equation}
An analogous inequality holds for the Dirichlet coefficients of the $L$-functions themselves. For all $\kn\subseteq \mathcal{O}_F$, 
\begin{equation}\label{ineq-22rs}
|\lambda_{\pi\times\pi'}(\kn)|\leq \sqrt{\lambda_{\pi\times\tilde{\pi}}(\kn)\lambda_{\pi'\times{\tilde{\pi}'}}(\kn)}.
\end{equation}
See the details in our joint work with L\"u and Wang \cite[Lemma 3.1]{JLW-2020}. In particular, when $\pi'\in\mathfrak{F}_{1}$, this implies
\begin{equation}\label{ineq-2rs}
    |\lambda_{\pi}(\kn)|^2\leq \lambda_{\pi\times\tilde{\pi}}(\kn)
\end{equation}
for all integral ideals $\kn$.

We now establish an inequality between the coefficients of the Rankin--Selberg $L$-function and those of its reciprocal. This result may be viewed as a complement to inequalities in \eqref{a-ineq} and \eqref{ineq-22rs}.

\begin{lemma}\label{lem-ineq}
	Let $\pi\in\mathfrak{F}_n$ and $\pi'\in\mathfrak{F}_{n'}$. Then the inequality
	\begin{equation}\label{f-sec-rsmm}
		|\mu_{\pi\times\pi'}(\kn)|\leq \sqrt{\lambda_{\pi\times\tilde{\pi}}(\kn)\lambda_{\pi'\times{\tilde{\pi}'}}(\kn)}
	\end{equation}
	holds for any integral ideal $\kn$. In particular, for any $\pi\in\mathfrak{F}_n$, we have
	\begin{equation}\label{f-sec-rs}
		|\mu_{\pi}(\kn)|^2\leq \lambda_{\pi\times\tilde{\pi}}(\kn),\quad \text{and}\quad |\mu_{\pi\times\tilde{\pi}}(\kn)|\leq \lambda_{\pi\times\tilde{\pi}}(\kn)
	\end{equation}
	for any integral ideal $\kn$.
\end{lemma}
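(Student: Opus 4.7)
The plan is to establish \eqref{f-sec-rsmm} by reducing to prime powers via multiplicativity and then running a local Cauchy--Schwarz argument organized around Schur-polynomial identities. Since $\mu_{\pi\times\pi'}$ arises as the Dirichlet inverse of $\lambda_{\pi\times\pi'}$ from the Euler product $L(s,\pi\times\pi')^{-1}=\prod_{\kp}L(s,\pi_{\kp}\times\pi_{\kp}')^{-1}$, it is multiplicative, and so is the product $\lambda_{\pi\times\tilde{\pi}}\lambda_{\pi'\times\tilde{\pi}'}$. Consequently the square-root bound factorizes compatibly across prime powers, and it suffices to prove \eqref{f-sec-rsmm} at $\kn=\kp^{k}$ for each prime ideal $\kp$ and integer $k\geq 1$.

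For an unramified prime $\kp$, write $\alpha_{j}:=\alpha_{j,\pi}(\kp)$ and $\beta_{j'}:=\alpha_{j',\pi'}(\kp)$, so that by \eqref{eq-RS-unramified} the local Rankin--Selberg parameters are the products $\alpha_{j}\beta_{j'}$. Setting $X=\N\kp^{-s}$, the dual Cauchy identity $\prod_{j,j'}(1+\alpha_{j}\beta_{j'}T)=\sum_{\lambda}s_{\lambda}(\alpha)s_{\lambda'}(\beta)T^{|\lambda|}$ (with $T=-X$) yields
\[
\mu_{\pi\times\pi'}(\kp^{k}) \;=\; (-1)^{k}\sum_{|\lambda|=k} s_{\lambda}(\alpha)\,s_{\lambda'}(\beta),
\]
while the ordinary Cauchy identity, together with the reality relation $s_{\lambda}(\bar{\alpha})=\overline{s_{\lambda}(\alpha)}$, gives
\[
\lambda_{\pi\times\tilde{\pi}}(\kp^{k}) \;=\; \sum_{|\lambda|=k}|s_{\lambda}(\alpha)|^{2}, \qquad \lambda_{\pi'\times\tilde{\pi}'}(\kp^{k}) \;=\; \sum_{|\lambda|=k}|s_{\lambda'}(\beta)|^{2},
\]
the latter after relabeling partitions by conjugation $\lambda\leftrightarrow\lambda'$, which is a bijection on partitions of $k$. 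The Cauchy--Schwarz inequality applied to the formula for $\mu_{\pi\times\pi'}(\kp^{k})$ then delivers the desired local bound.

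At primes $\kp$ where $\pi$ or $\pi'$ is ramified, identity \eqref{eq-RS-unramified} may fail, but the local factor $L(s,\pi_{\kp}\times\pi_{\kp}')$ still admits the linear-factor decomposition described in Brumley's appendix to \cite{ST-2019} (with some parameters possibly equal to zero), and the same symmetric-function identities carry through. This ramified analysis is the step requiring the most care, since one must verify that the local Euler factors of $L(s,\pi_{\kp}\times\tilde{\pi}_{\kp})$ and $L(s,\pi_{\kp}'\times\tilde{\pi}_{\kp}')$ remain compatible with the factorization at $\kp$ in the sense demanded by Cauchy--Schwarz; the argument here parallels \cite[Lemma 3.1]{JLW-2020}, which established the analogous bound \eqref{ineq-22rs} for $\lambda_{\pi\times\pi'}$.

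Finally, the two specializations in \eqref{f-sec-rs} fall out of \eqref{f-sec-rsmm}: taking $\pi'$ to be the trivial character on $\mathrm{GL}_{1}(\mathbb{A}_{F})$ gives $L(s,\pi'\times\tilde{\pi}')=\zeta_{F}(s)$, so $\lambda_{\pi'\times\tilde{\pi}'}(\kn)=1$ and $\mu_{\pi\times\pi'}=\mu_{\pi}$, yielding $|\mu_{\pi}(\kn)|^{2}\leq\lambda_{\pi\times\tilde{\pi}}(\kn)$ upon squaring; taking $\pi'=\tilde{\pi}$ gives $\lambda_{\pi'\times\tilde{\pi}'}=\lambda_{\pi\times\tilde{\pi}}$, yielding $|\mu_{\pi\times\tilde{\pi}}(\kn)|\leq\lambda_{\pi\times\tilde{\pi}}(\kn)$.
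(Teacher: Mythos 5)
Your argument for unramified primes is correct and is essentially the Schur-function proof of Humphries and Thorner \cite{HT-2024}: the dual and ordinary Cauchy identities express $\mu_{\pi\times\pi'}(\kp^k)$ and $\lambda_{\pi\times\tilde{\pi}}(\kp^k)$ as sums over partitions of $k$, and Cauchy--Schwarz (together with the conjugation bijection on partitions) finishes the job. The specializations yielding \eqref{f-sec-rs} are also fine. The problem is the ramified case, which you flag as "the step requiring the most care" but then assert rather than prove. This is precisely where the lemma's content lies: as the remark following the statement notes, \cite{HT-2024} proved \eqref{f-sec-rsmm} only for $(\kn,\kq_\pi\kq_{\pi'})=\mathcal{O}_F$, and the whole point here is to remove that restriction.

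Concretely, at a ramified prime the symmetric-function identities do \emph{not} carry through. The Cauchy identities require the local Rankin--Selberg parameters to have the product structure $\{\alpha_{j,j',\pi\times\pi'}(\kp)\}=\{\alpha_{j,\pi}(\kp)\alpha_{j',\pi'}(\kp)\}$ of \eqref{eq-RS-unramified}; when $\kp\mid\kq_\pi\kq_{\pi'}$ this fails, and the parameters described in Brumley's appendix to \cite{ST-2019} (arising from the tensor product of local Langlands parameters, e.g.\ for supercuspidal components) are not products of individual Satake parameters of $\pi_\kp$ and $\pi'_\kp$. So there is no identity of the form $\mu_{\pi\times\pi'}(\kp^k)=(-1)^k\sum_{|\lambda|=k}s_\lambda(\alpha)s_{\lambda'}(\beta)$ to which Cauchy--Schwarz could be applied, and "verifying compatibility" is not a routine check but the actual obstruction. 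The paper circumvents this entirely by a different mechanism: Newton's identity gives the recursions $\nu\mu_{\pi\times\pi'}(\kp^\nu)=-\sum_{l\le\nu}a_{\pi\times\pi'}(\kp^l)\mu_{\pi\times\pi'}(\kp^{\nu-l})$ and $\nu\lambda_{\pi\times\pi'}(\kp^\nu)=\sum_{l\le\nu}a_{\pi\times\pi'}(\kp^l)\lambda_{\pi\times\pi'}(\kp^{\nu-l})$, and one inducts on $\nu$ using Brumley's bound \eqref{a-ineq}, $|a_{\pi\times\pi'}(\kn)|\le\sqrt{a_{\pi\times\tilde{\pi}}(\kn)a_{\pi'\times\tilde{\pi}'}(\kn)}$, which is valid at \emph{all} primes including ramified ones; Cauchy--Schwarz then closes the induction uniformly without ever invoking the product structure of the local parameters. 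To repair your proof you would need either to import such a recursion-based argument at the ramified primes or to supply a genuine analysis of the ramified local factors, neither of which is present.
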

\begin{remark}
The inequality \eqref{f-sec-rsmm} was previously established by Humphries and Thorner \cite{HT-2024} under the restriction that $(\kn, \kq_{\pi} \kq_{\pi'})=\mathcal{O}_F$. Here, we remove this coprime condition. This generalization is important for the arguments of some results without the constraint of conductors.
\end{remark}

\begin{proof}
	To prove this lemma, we begin by recalling a useful recursive identity, which states as follows: Let $\{\beta_j\}_{j=1}^n$ be a finite sequence of complex numbers. The elementary symmetric polynomials $e_\nu(\beta_1,\ldots, \beta_n)$ in variables $\{\beta_j\}_{j=1}^n$ are defined for $\nu\geq 0$ by
	\[
	e_\nu(\beta_1,\cdots, \beta_n)=\sum_{1\leq j_1< j_2<\cdots< j_\nu\leq n} \beta_{j_1}\beta_{j_2}\cdots\beta_{j_\nu},
	\]
 with the convention that $e_\nu(\beta_1,\cdots, \beta_n)=0$ for any $\nu>n$. Moreover, for $\nu\geq 1$, define the $\nu$-th power sum $c_\nu(\beta_1,\cdots, \beta_n)$ by 
	\[
	c_\nu(\beta_1,\cdots, \beta_n)=\sum_{j=1}^n\beta_j^\nu.
	\]
Newton's identity provides a recursion connecting these two types of polynomials, which states 
	\begin{equation}\label{recur-relation}
		\nu e_\nu(\beta_1,\cdots, \beta_n)=\sum_{j=1}^{\nu}(-1)^{j-1}e_{\nu-j}(\beta_1,\cdots, \beta_n)c_j(\beta_1,\cdots, \beta_n)
	\end{equation}
	valid for $1\leq \nu\leq n$.
	
We shall apply this recursion to the following specific case. Consider the local factor of the Rankin--Selberg $L$-function, which admits the following two expressions
	\[
	L_{\kp}(s,\pi\times\pi')^{-1}=\sum_{\nu=0}^{\infty}\frac{\mu_{\pi\times\pi'}(\kp^{\nu})}{\N\kp^{\nu s}}=\prod_{j=1}^{n}\prod_{j'=1}^{n'}\Big(1-\frac{\alpha_{j,j',\pi\times\pi'}(\kp) }{\N\kp^{s}}\Big).
	\]
    By comparing the Euler product with the Dirichlet series expansion and applying Newton's identity \eqref{recur-relation} to the local parameters, we obtain from \eqref{def-api} the following recursive relation
	\begin{equation}\label{re2}
		\nu\mu_{\pi\times\pi'}(\kp^\nu)=-\sum_{l=1}^\nu a_{\pi\times\pi'}(\kp^l)\mu_{\pi\times\pi'}(\kp^{\nu-l})
	\end{equation}
	for $1\leq \nu\leq nn'$ and $\mu_{\pi\times\pi'}(\kp^\nu)=0$ for $\nu>n n'$. Moreover, we recall another recursion for the coefficients of the Rankin--Selberg $L$-function
	\begin{equation}\label{re-3-lambda}
		\nu\lambda_{\pi\times\pi'}(\kp^\nu)=\sum_{l=1}^\nu a_{\pi\times\pi'}(\kp^l)\lambda_{\pi\times\pi'}(\kp^{\nu-l})
	\end{equation}
	for $\nu\geq 1$ (see \cite[equation (3.14)]{JLW-2020}).
	
   Since each Rankin--Selberg $L$-function admits the Euler product, we find the arithmetic function $\mu_{\pi\times\pi'}(\kn)$ is multiplicative. Hence, it suffices to establish  the desired inequality \eqref{f-sec-rsmm} holds for the powers of prime ideals, that is
	\begin{equation}\label{p-power-ineq}
		|\mu_{\pi\times\pi'}(\kp^\nu)|\leq \sqrt{\lambda_{\pi\times\tilde{\pi}}(\kp^\nu)\lambda_{\pi'\times{\tilde{\pi}'}}(\kp^\nu)}
	\end{equation}
	holds for all prime ideals $\kp$ and integers $\nu\ge 1$. This inequality holds trivially when $\nu>n n'$, since the left-hand side vanishes. We shall prove \eqref{p-power-ineq} by induction on the exponent $\nu$. For $\nu=1$, it follows from \eqref{re2} and \eqref{re-3-lambda} that
    $\mu_{\pi\times\pi'}(\kp) = -a_{\pi\times\pi'}(\kp)$ and $\lambda_{\pi\times\tilde{\pi}}(\kp) = a_{\pi\times\tilde{\pi}}(\kp)$.
    we then apply the inequality \eqref{a-ineq} to obtain
	\[
	|\mu_{\pi\times\pi'}(\kp)|^2=|a_{\pi\times\pi'}(\kp)|^2\leq a_{\pi\times\tilde{\pi}}(\kp) a_{\pi'\times{\tilde{\pi}'}}(\kp)=\lambda_{\pi\times\tilde{\pi}}(\kp)\lambda_{\pi'\times{\tilde{\pi}'}}(\kp).
	\]
	For any $1\leq \nu\leq n n'$, assume that \eqref{p-power-ineq} holds for all exponents up to $\nu$. That is
	\begin{equation}\label{ineq-hy}
		|\mu_{\pi\times\pi'}(\kp^c)|^2\leq \lambda_{\pi\times\tilde{\pi}}(\kp^c)\lambda_{\pi'\times{\tilde{\pi}'}}(\kp^c)
		\quad \text{for all} \;c\leq \nu.
	\end{equation}
	By applying \eqref{re2} and the triangle inequality, we deduce
	\begin{equation}\label{hy-step}
		(\nu+1)|\mu_{\pi\times\pi'}(\kp^{\nu+1})|\leq \sum_{l=1}^{\nu+1} |a_{\pi\times\pi'}(\kp^l)\mu_{\pi\times\pi'}(\kp^{\nu+1-l})|.
	\end{equation}
	Further, by using \eqref{a-ineq}, the induction hypothesis \eqref{ineq-hy} and the Cauchy--Schwarz inequality, the sum on the right hand side of \eqref{hy-step} is bounded by
	\begin{equation*}
		\begin{aligned}
&\sum_{l=1}^{\nu+1}\sqrt{a_{\pi\times\tilde{\pi}}(\kp^{l})a_{\pi'\times{\tilde{\pi}'}}(\kp^{l})\lambda_{\pi\times\tilde{\pi}}(\kp^{\nu+1-l})\lambda_{\pi'\times{\tilde{\pi}'}}(\kp^{\nu+1-l})}\\
			\leq& \left(\sum_{l=1}^{\nu+1}a_{\pi\times\tilde{\pi}}(\kp^{l})\lambda_{\pi\times\tilde{\pi}}(\kp^{\nu+1-l})\right)^{\frac{1}{2}}\left(\sum_{l=1}^{\nu+1}a_{\pi'\times{\tilde{\pi}'}}(p^{l})\lambda_{\pi'\times{\tilde{\pi}'}}(\kp^{\nu+1-l})\right)^{\frac{1}{2}},
		\end{aligned}
	\end{equation*}
	which is just equal to
	$
(\nu+1)\sqrt{\lambda_{\pi\times\tilde{\pi}}(\kp^{\nu+1})\lambda_{\pi'\times{\tilde{\pi}'}}(\kp^{\nu+1})}
	$
	by \eqref{re-3-lambda}. Hence, we conclude from \eqref{hy-step} that
	\[
	|\mu_{\pi\times\pi'}(\kp^{\nu+1})|\leq \sqrt{\lambda_{\pi\times\tilde{\pi}}(\kp^{\nu+1})\lambda_{\pi'\times{\tilde{\pi}'}}(\kp^{\nu+1})}.
	\]
This completes the induction argument and establishes the claimed inequality \eqref{f-sec-rsmm}.

The two inequalities in \eqref{f-sec-rs} are in fact special cases of \eqref{f-sec-rsmm}. Indeed, they follow by taking $\pi' \in \mathfrak{F}_1$ for the first, and $\pi' = \tilde{\pi}$ for the second in \eqref{f-sec-rsmm}.
\end{proof}

\vskip 5mm

\section{Power sieve for number fields}

\subsection{Construction of characters}

The power sieve originated from Heath-Brown’s square sieve \cite{HB-1984}, which was designed to identify squares over the rational field $\Q$ via quadratic characters such as the Legendre symbol. Munshi \cite{Munshi-2009} later generalized this concept to detect $k$-th powers, and established the power sieve. This method has important applications in counting rational points on algebraic varieties (see \cite{Munshi-2009, HB-P-2012}). 

In this work, we extend the power sieve to arbitrary number fields $F$, which broadens its applicability in analytic number theory. In particular, we apply the power sieve to problems involving automorphic forms. This generalization requires replacing detection for rational integers with an ideal-theoretic framework. While the analytic structure of the sieve remains essentially unchanged, the main challenge lies in detecting $k$-th power ideals. To address this, we construct ray class characters of exact order $k$, analogous to Dirichlet characters over $\mathbb{Q}$. 

Let $F$ be a number field with ring of integers $\mathcal{O}_F$, and let $k \geq 2$ be an integer. We now describe the construction of a family of ray class characters that detect $k$-th power ideals. The following lemma provides the character-theoretic condition for the existence of primitive ray class characters of exact order $k$.

\begin{lemma}\label{lem-ray-1}
Let $\kp$ be a nonzero prime ideal of $\mathcal{O}_F$, and let $Cl_\kp(F)$ denote the ray class group modulo $\kp$. Let
\[
K_\kp := (\mathcal{O}_F/\kp)^\times \big/ \phi(\mathcal{O}_F^\times),
\]
where $\phi : \mathcal{O}_F^\times \to (\mathcal{O}_F/\kp)^\times$ is the natural reduction map on units. Then $Cl_\kp(F)$ admits a primitive ray class character of exact order $k$ if and only if $k$ divides $|K_\kp|$.
\end{lemma}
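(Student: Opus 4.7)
The plan is to exploit the short exact sequence
\[
1 \longrightarrow K_\kp \longrightarrow Cl_\kp(F) \longrightarrow Cl(F) \longrightarrow 1
\]
for the ray class group modulo $\kp$, together with the cyclicity of $K_\kp$. Cyclicity is immediate since $(\mathcal{O}_F/\kp)^\times$ is the multiplicative group of the finite field $\mathcal{O}_F/\kp$, hence cyclic, and $K_\kp$ is its quotient by $\phi(\mathcal{O}_F^\times)$. Under the standard characterization, a ray class character modulo $\kp$ is primitive precisely when it does not factor through the projection to $Cl(F)$, equivalently when its restriction to $K_\kp$ is nontrivial.

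For the sufficient direction, I would assume $k \mid |K_\kp|$ and use cyclicity to produce a character $\chi_0 : K_\kp \to \mathbb{C}^\times$ of exact order $k$. Pontryagin duality applied to the exact sequence yields
\[
1 \longrightarrow \widehat{Cl(F)} \longrightarrow \widehat{Cl_\kp(F)} \longrightarrow \widehat{K_\kp} \longrightarrow 1,
\]
so $\chi_0$ lifts to some character $\chi$ of $Cl_\kp(F)$. Any two lifts differ by a character pulled back from $Cl(F)$, and I would use this freedom to adjust the lift so that $\chi$ has order exactly $k$, cancelling any excess by multiplying by a suitable element of $\widehat{Cl(F)}$. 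By construction $\chi|_{K_\kp} = \chi_0$ is nontrivial, hence $\chi$ is primitive of exact order $k$. For the converse, given a primitive character $\chi$ of exact order $k$, primitivity yields $\chi|_{K_\kp}$ nontrivial, and I would argue that this restriction has order exactly $k$, whence $k \mid |K_\kp|$ since every character order on $K_\kp$ divides $|K_\kp|$.

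The main obstacle is the order-control step in the sufficient direction: a generic lift of $\chi_0$ can have order that is a proper multiple of $k$ coming from the $Cl(F)$-direction, and one has to show that this excess can always be absorbed by multiplication by a character of $Cl(F)$. Concretely, this reduces to verifying that the obstruction $\chi^k \in \widehat{Cl(F)}$ lies in the image of the $k$-th power map on $\widehat{Cl(F)}$, which I expect to follow from the cyclic structure of $K_\kp$ combined with the invariant-factor decomposition of $Cl_\kp(F)$.
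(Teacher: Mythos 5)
Your proposal follows the same route as the paper: the exact sequence $1 \to K_\kp \to Cl_\kp(F) \to Cl(F) \to 1$, the criterion that primitivity amounts to nontriviality on $K_\kp$, Pontryagin duality, and the cyclicity of $K_\kp$ (which you state explicitly, and which the paper needs but leaves implicit when it invokes ``an element of order $k$ exists iff $k$ divides the group order'' --- false for general finite abelian groups, true for cyclic ones). One local issue: in the converse direction you propose to argue that the restriction of a primitive character of exact order $k$ to $K_\kp$ again has exact order $k$. It need not; it is merely nontrivial of order dividing $k$, so this step does not by itself give $k \mid |K_\kp|$.

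The more serious point is that the step you single out as ``the main obstacle'' is a genuine gap, and it is exactly the step the paper's own proof asserts without justification (``the existence of a primitive character of exact order $k$ is equivalent to $\widehat{K_\kp}$ containing a character of exact order $k$''). Your reduction is correct: a lift $\chi$ of $\chi_0$ can be corrected to one of exact order $k$ if and only if $\chi^k$, which is trivial on $K_\kp$ and hence lies in $\widehat{Cl(F)}$, is a $k$-th power there. But this obstruction does not vanish for an arbitrary extension with cyclic kernel, so it cannot follow from ``the cyclic structure of $K_\kp$ combined with the invariant-factor decomposition'' alone. Concretely, consider the extension $0 \to k\Z/k^2\Z \to \Z/k^2\Z \to \Z/k\Z \to 0$ in the roles of $K_\kp$, $Cl_\kp(F)$, $Cl(F)$: every character of $\Z/k^2\Z$ of order dividing $k$ is trivial on the subgroup $k\Z/k^2\Z$, so no primitive character of exact order $k$ exists even though $k = |K_\kp|$; equivalently, for any lift $\chi$ of a generator of $\widehat{K_\kp}$ the class $\chi^k$ generates $\widehat{Cl(F)} \cong \Z/k\Z$ and is not a $k$-th power. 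Closing this gap therefore requires input beyond abstract group theory --- some structural fact about the extension of ray class groups (for instance, a splitting) or a weakening of what is claimed --- and as it stands neither your sketch nor the paper's argument supplies it.
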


\begin{proof}
By class field theory, we have the short exact sequence
\[
1 \to K_\kp \to Cl_\kp(F) \xrightarrow{\rho} Cl(F) \to 1,
\]
where $Cl(F)$ denotes the ideal class group of $F$ (see, e.g., \cite[Page 163]{Cohen-2000}). A character $\chi : Cl_\kp(F) \to \mathbb{C}^\times$ is primitive if and only if it does not factor through the projection $\rho$, which is equivalent to its restriction to the kernel $K_\kp$ being nontrivial.

Taking Pontryagin duals yields the exact sequence of character groups
\[
1 \to \widehat{Cl(F)} \xrightarrow{\widehat{\rho}} \widehat{Cl_\kp(F)} \xrightarrow{} \widehat{K_\kp} \to 1,
\]
where $\widehat{G}$ denotes the Pontryagin dual of a finite abelian group $G$. This dual sequence shows that the character group $\widehat{K_\kp}$ is isomorphic to the quotient of $\widehat{Cl_\kp(F)}$ by its subgroup of non-primitive characters. This establishes that the existence of a primitive character of exact order $k$ is equivalent to $\widehat{K_\kp}$ containing a character of exact order $k$. For a finite abelian group, an element of order $k$ exists if and only if $k$ divides the order of the group. Hence, the condition is $k \mid |\widehat{K_\kp}|$. Since $\widehat{K_\kp} \cong K_\kp$, their orders are equal. Thus, the condition is equivalent to $k \mid |K_\kp|$.
\end{proof}

Next, we show this condition is satisfied for prime ideals that split completely in a specific Galois extension. The proof relies on standard consequences of class field theory and Kummer theory.

\begin{lemma}\label{lem-ray-2}
Let $L = F(\zeta_k, \sqrt[k]{\mathcal{O}_F^\times})$ be the Galois extension generated by the $k$-th roots of unity and $k$-th roots of units in $F$. If a prime ideal $\kp$ splits completely in $L$, then $k$ divides $|K_\kp|$.
\end{lemma}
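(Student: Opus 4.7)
The plan is to build a natural surjective group homomorphism from $K_\kp$ onto $\mathbb{Z}/k\mathbb{Z}$; Lagrange's theorem then immediately gives $k\mid|K_\kp|$. The target will be $(\mathcal{O}_F/\kp)^\times/((\mathcal{O}_F/\kp)^\times)^k$, and the two pieces packaged into the extension $L$—the cyclotomic part $F(\zeta_k)$ and the Kummer part $F(\sqrt[k]{\mathcal{O}_F^\times})$—correspond exactly to the two inputs we need: that this quotient has order $k$, and that the canonical projection kills $\phi(\mathcal{O}_F^\times)$.

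First I would exploit the inclusion $F(\zeta_k)\subseteq L$. Complete splitting in $L$ forces complete splitting in $F(\zeta_k)$, so $\zeta_k$ lies in the completion $F_\kp$ and reduces to a primitive $k$-th root of unity in the residue field $\mathcal{O}_F/\kp$. Hence $k$ divides $\N\kp-1$, and since $(\mathcal{O}_F/\kp)^\times$ is cyclic of order $\N\kp-1$, the quotient by $k$-th powers is cyclic of order exactly $k$.

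Next I would use the Kummer part. For each $u\in\mathcal{O}_F^\times$, the intermediate field $F(\zeta_k,\sqrt[k]{u})$ lies inside $L$, so $\kp$ splits completely there as well; picking a prime $\mathfrak{P}$ above $\kp$, we deduce $F(\zeta_k,\sqrt[k]{u})_{\mathfrak{P}}=F_\kp$, and in particular $u$ admits a $k$-th root in $F_\kp$. Since $u$ is a $\kp$-adic unit, so is this root, and reducing modulo $\kp$ shows $\phi(u)\in((\mathcal{O}_F/\kp)^\times)^k$. Ranging over all $u\in\mathcal{O}_F^\times$ gives $\phi(\mathcal{O}_F^\times)\subseteq((\mathcal{O}_F/\kp)^\times)^k$.

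Combining these two inputs, the natural projection $(\mathcal{O}_F/\kp)^\times\twoheadrightarrow(\mathcal{O}_F/\kp)^\times/((\mathcal{O}_F/\kp)^\times)^k\cong\mathbb{Z}/k\mathbb{Z}$ annihilates $\phi(\mathcal{O}_F^\times)$ and therefore factors through a surjection $K_\kp\twoheadrightarrow\mathbb{Z}/k\mathbb{Z}$, completing the proof. The main point to verify carefully is the Kummer step: one must confirm that complete splitting of $\kp$ in $F(\zeta_k,\sqrt[k]{u})/F$ genuinely produces a global $k$-th root of $u$ inside $F_\kp$ which then descends to the residue field. No Hensel-type lifting difficulty arises, since our direction of implication goes from an actual root in the completion down to its reduction, not the reverse.
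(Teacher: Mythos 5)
Your proposal is correct and follows essentially the same two-step route as the paper: the cyclotomic part of $L$ gives $k\mid \N\kp-1$, and the Kummer part gives $\phi(\mathcal{O}_F^\times)\subseteq((\mathcal{O}_F/\kp)^\times)^k$, from which $k\mid|K_\kp|$ follows. The only cosmetic differences are that you justify the Kummer step via the completion $F_\kp$ rather than the Dedekind--Kummer factorization of $X^k-u$ over the residue field, and you phrase the conclusion as a surjection $K_\kp\twoheadrightarrow\mathbb{Z}/k\mathbb{Z}$ instead of counting orders directly; both are equivalent.
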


\begin{proof}
Suppose $\kp$ splits completely in $L = F(\zeta_k, \sqrt[k]{\mathcal{O}_F^\times})$. The complete splitting of $\kp$ in $L$ implies its complete splitting in the subfield $F(\zeta_k)$. It is a standard result that this occurs if and only if the norm of the prime satisfies $\N\kp \equiv 1 \pmod k$ (see, e.g., \cite[Chapter I, \S5]{Lang-1994}). Furthermore, for any unit $u \in \mathcal{O}_F^\times$, $\kp$ must also split completely in the extension $F(\sqrt[k]{u})$. By the Dedekind--Kummer theorem, the splitting of $\kp$ in $F(\sqrt[k]{u})$ is determined by the factorization of the polynomial $X^k - u$ over the residue field $\mathcal{O}_F/\kp$. Complete splitting occurs if and only if this polynomial factors into distinct linear terms, which is equivalent to the condition that the image $\phi(u)$ of $u$ is a $k$-th power in the multiplicative group $(\mathcal{O}_F/\kp)^\times$ (see, e.g., \cite[Chapter I, Proposition 8.3]{Neukirch-1999}). Since this holds for all units $u \in \mathcal{O}_F^\times$, it follows that 
$\phi(\mathcal{O}_F^\times)\subseteq ((\mathcal{O}_F/\kp)^\times)^k$.

Let $m := \N\kp - 1$. The group $(\mathcal{O}_F/\kp)^\times$ is cyclic of order $m$, and its subgroup $((\mathcal{O}_F/\kp)^\times)^k$ has order $m/(m,k)=m/k$. Since $\phi(\mathcal{O}_F^\times)$ is a subgroup of the group of $k$-th powers, Lagrange's theorem implies that $|\phi(\mathcal{O}_F^\times)|$ divides $m/k$. It follows that 
\[
|K_\kp| = \frac{|(\mathcal{O}_F/\kp)^\times|}{|\phi(\mathcal{O}_F^\times)|} = \frac{m}{|\phi(\mathcal{O}_F^\times)|} = k \cdot \frac{m/k}{|\phi(\mathcal{O}_F^\times)|}.
\]
Therefore, $|K_\kp|$ is divisible by $k$.
\end{proof}

\begin{proposition}\label{prop-ray-chi}
Let $F$ be a number field and $k \geq 2$ an integer. Let $L = F(\zeta_k, \sqrt[k]{\mathcal{O}_F^\times})$. Then for any prime ideal $\kp$ that splits completely in $L$, the ray class group $Cl_\kp(F)$ admits a primitive ray class character of exact order $k$.
\end{proposition}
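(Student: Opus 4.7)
The proposition is essentially a direct combination of the two preceding lemmas, so my plan is to simply chain them together. Suppose that $\kp$ is a prime ideal of $\mathcal{O}_F$ that splits completely in the extension $L = F(\zeta_k, \sqrt[k]{\mathcal{O}_F^\times})$. Then Lemma \ref{lem-ray-2} applies verbatim and gives the divisibility $k \mid |K_\kp|$, where $K_\kp = (\mathcal{O}_F/\kp)^\times/\phi(\mathcal{O}_F^\times)$ is the cokernel of the reduction map on units.

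Next I would feed this divisibility into Lemma \ref{lem-ray-1}. That lemma asserts the equivalence between $k \mid |K_\kp|$ and the existence of a primitive ray class character on $Cl_\kp(F)$ of exact order $k$. Since the hypothesis of the ``if'' direction is now verified, the lemma produces the desired primitive character, completing the proof.

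There is essentially no obstacle, since the heavy lifting has already been done in Lemmas \ref{lem-ray-1} and \ref{lem-ray-2}: the first translates the existence of primitive characters into a purely group-theoretic divisibility condition via class field theory and Pontryagin duality, while the second uses Kummer-type splitting criteria in $F(\zeta_k)$ and $F(\sqrt[k]{u})$ for $u \in \mathcal{O}_F^\times$ to ensure that $\phi(\mathcal{O}_F^\times)$ lies inside the subgroup of $k$-th powers of $(\mathcal{O}_F/\kp)^\times$, forcing $k \mid |K_\kp|$. The only thing worth flagging in the write-up is to make the logical direction explicit: we are using the easy ``if'' implication of Lemma \ref{lem-ray-1}, and the splitting hypothesis on $\kp$ in the full extension $L$ (not merely in $F(\zeta_k)$) is what guarantees the inclusion $\phi(\mathcal{O}_F^\times) \subseteq ((\mathcal{O}_F/\kp)^\times)^k$ needed by Lemma \ref{lem-ray-2}. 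Thus the proof reduces to a one-line deduction: Lemma \ref{lem-ray-2} gives $k \mid |K_\kp|$, and Lemma \ref{lem-ray-1} then supplies the required primitive character of exact order $k$.
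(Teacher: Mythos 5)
Your proposal is correct and matches the paper's proof exactly: the paper also deduces the proposition by combining Lemma \ref{lem-ray-2} (which yields $k \mid |K_\kp|$ from the splitting hypothesis) with the ``if'' direction of Lemma \ref{lem-ray-1}. Nothing further is needed.
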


\begin{proof}
The result follows immediately by combining Lemma \ref{lem-ray-1} and Lemma \ref{lem-ray-2}.
\end{proof}

Having established the existence of suitable ray class characters, we now construct the character system for our power sieve. Let $L = F(\zeta_k, \sqrt[k]{\mathcal{O}_F^\times})$, and let $P$ be a large real parameter. Define the set of primes
\[
\mathcal{P} := \left\{ \kp \subset \mathcal{O}_F : \N\kp \leq P,\ \kp \text{ splits completely in } L \right\}.
\]
By Proposition \ref{prop-ray-chi}, each prime ideal $\kp \in \mathcal{P}$ admits a primitive ray class character modulo $\kp$ of exact order $k$. We may therefore select one such character for each $\kp \in \mathcal{P}$ and denote it by $\chi_\kp$.

The fundamental property of this character system $\{\chi_\kp\}_{\kp \in \mathcal{P}}$ is its ability to detect $k$-th power ideals. Since $\chi_\kp$ has order $k$, it is trivial on any $k$-th power ideal $\kn = \mathfrak{b}^k$, provided $(\mathfrak{b}, \kp) = \mathcal{O}_F$. Thus, for such $\kn = \mathfrak{b}^k$, we have
\[
\chi_\kp(\kn) = \chi_\kp(\mathfrak{b}^k) = \left( \chi_\kp(\mathfrak{b}) \right)^k = 1.
\]
This power-detection property is the analytic core of the sieve, and is in direct analogy with the role of the Legendre symbol in Heath-Brown's square sieve for detecting squares in the setting $F=\mathbb{Q}$.

\subsection{Power sieve inequality}
In our context, the key ingredient we extract from the power sieve is the following lemma, which provides an upper bound for the contribution of $k$-th powers weighted by a general arithmetic sequence.

\begin{lemma}\label{lem-powersieve}
Let $\{f(\kn)\}_{\kn}$ be a finite sequence of non-negative numbers indexed by nonzero ideals $\kn \subseteq \mathcal{O}_F$. Let $k \geq 2$ be a fixed integer. Then for any nonzero $\mathfrak{a}\subseteq \mathcal{O}_F$, we have
\[
  \sum_{\kn} f(\kn^k) \ll P^{-2} D_{F}^{70k^{[F:\Q]+1}}\sum_{\kn} f(\kn) \Bigg(\Big| \sum_{\substack{\kp \in \mathcal{P} \\ (\kp, \mathfrak{a})=\mathcal{O}_F}} \chi_{\mathfrak{p}}(\kn) \log \N\kp \Big|^2 + \log^2(2+\N\mathfrak{a}\kn)\Bigg),
\]
where $\mathcal{P}$ is the set of prime ideals defined above with norm bound $P$, and for each $\kp \in \mathcal{P}$, $\chi_\kp$ is the corresponding primitive ray class character of order $k$. The implied constant depends only on $k$ and $[F:\Q]$.
\end{lemma}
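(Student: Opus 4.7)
I would follow the Heath-Brown square sieve template, adapted to the number field setting via the ray class characters built in Section~4.1. Set
\[
M := \sum_{\substack{\kp \in \mathcal{P} \\ (\kp,\mathfrak{a})=\mathcal{O}_F}} \log \N\kp, \qquad T(\kn) := \sum_{\substack{\kp \in \mathcal{P} \\ (\kp,\mathfrak{a})=\mathcal{O}_F}} \chi_\kp(\kn)\log \N\kp.
\]
The crucial observation is that $\chi_\kp$ has order exactly $k$, so for any $k$-th power $\kn = \kn_0^k$ we have $\chi_\kp(\kn_0^k) = \chi_\kp(\kn_0)^k = 1$ whenever $(\kp,\kn_0)=\mathcal{O}_F$, while $\chi_\kp$ vanishes on multiples of $\kp$. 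Consequently
\[
T(\kn_0^k) = M - R(\kn_0), \qquad 0 \leq R(\kn_0) \leq \log \N\kn_0 \leq \log(2+\N\mathfrak{a}\kn_0^k).
\]

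The heart of the argument is a lower bound $M \gg P/k^{[F:\Q]+1}$ for $P$ sufficiently large. I would deduce this from an effective Chebotarev density theorem (of Lagarias--Odlyzko or Weiss type) applied to the splitting field $L = F(\zeta_k, \sqrt[k]{\mathcal{O}_F^\times})$. The relative degree satisfies $[L:F] \leq k^{[F:\Q]+1}$ because $[F(\zeta_k):F] \leq k$ and $\mathcal{O}_F^\times$ has rank at most $[F:\Q]$, while $\operatorname{disc}(L/\Q)$ is bounded by a power of $D_F k$ whose exponent depends only on $[L:\Q]$. After subtracting the at most $\log \N\mathfrak{a}$ arising from the primes dividing $\mathfrak{a}$, effective Chebotarev yields $M \gg P/k^{[F:\Q]+1}$ once $P$ exceeds a suitable power of $D_F^{k^{[F:\Q]+1}}$; the explicit constant $70$ in the statement is the bookkeeping record of this threshold.

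Given the lower bound on $M$, apply the elementary inequality $M^2 \leq 2(M-R)^2 + 2R^2$ to each $\kn_0$, multiply by $f(\kn_0^k) \geq 0$, and sum over $\kn_0$. Using the non-negativity of $f$ to enlarge the right-hand sums to range over all nonzero integral ideals $\kn$, one obtains
\[
M^2 \sum_{\kn} f(\kn^k) \;\leq\; 2\sum_{\kn} f(\kn)\,T(\kn)^2 + 2\sum_{\kn} f(\kn)\log^2(2+\N\mathfrak{a}\kn),
\]
and dividing by $M^2$ delivers the lemma in the high-$P$ regime. In the complementary range where $P^{-2} D_F^{70 k^{[F:\Q]+1}} \geq \log^{-2} 2$, the crude bound $\sum_\kn f(\kn^k) \leq \sum_\kn f(\kn) \leq \log^{-2}(2)\sum_\kn f(\kn)\log^2(2+\N\mathfrak{a}\kn)$ already gives the claim.

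The main obstacle is the effective Chebotarev step. Producing an unconditional lower bound for $M$ of the correct size with tame polynomial dependence on $D_F$ and $k$ requires a careful estimate of $\operatorname{disc}(L/\Q)$ in terms of $D_F$, $k$, and the unit structure of $F$, and then absorbing the Chebotarev error term --- including the possible contribution of a Siegel exceptional zero --- into the threshold on $P$. Everything else in the argument reduces to the power-detection identity together with a single application of the elementary quadratic inequality.
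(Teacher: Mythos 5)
Your proposal is correct and follows essentially the same route as the paper: detect $k$-th powers via the order-$k$ ray class characters, reduce to a lower bound for the weight on $k$-th power ideals, and supply that lower bound by effective Chebotarev for the splitting field $L=F(\zeta_k,\sqrt[k]{\mathcal{O}_F^\times})$ together with the degree bound $[L:F]\le k^{[F:\Q]+1}$ and a discriminant comparison. The only imprecision is your claim that effective Chebotarev gives the full-density bound $M\gg P/k^{[F:\Q]+1}$ once $P$ is a polynomial power of $D_F^{k^{[F:\Q]+1}}$: unconditionally (because of a possible exceptional zero) one only gets a density-deficient bound of Zaman's type, $M\gg P/(D_L^{19}[L:F])\gg P/D_F^{35k^{[F:\Q]+1}}$ for $P\gg D_L^{35}$, and the factor $D_F^{70k^{[F:\Q]+1}}$ in the lemma is there precisely to absorb this deficit (not merely the threshold on $P$); with that substitution, your argument, including the elementary quadratic inequality and the trivial treatment of the small-$P$ range, delivers the stated bound.
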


\begin{remark}
    This estimate provides an upper bound for a sum weighted by $f(\kn)$ over $k$-th powers. In applications, the $f(\kn)$ is typically a structured arithmetic function of interest (for example, the coefficients of an $L$-function). The power of this method lies in transforming the original problem into one of bounding character-twisted sums over $\kn$, which often exhibit crucial cancellation and are therefore smaller than the trivial estimate $\sum_{\kn} f(\kn)$.
\end{remark}

\begin{proof}
  Since the weight terms in the parentheses on the right-hand side of the inequality are non-negative, it suffices to show that they are $\gg P^2D_{F}^{-70k^{[F:\Q]+1}}$ when $\kn$ is the $k$-th power of some ideal $\mathfrak{b}$. Specifically, our goal is to establish that for any ideal $\mathfrak{b}$, the inequality
  \[
  \Big| \sum_{\substack{\kp \in \mathcal{P} \\ (\kp, \mathfrak{a})=\mathcal{O}_F}} \chi_{\mathfrak{p}}(\kn) \log \N\kp \Big|^2 + \log^2(\N\mathfrak{a}\kn)\gg_{k,[F:\Q]} \frac{P^2}{D_{F}^{70k^{[F:\Q]+1}}}
  \]
  holds for $\kn= \mathfrak{b}^k$, provided that $P\gg D_{F}^{35k^{[F:\Q]+1}}$. 
  
  If $\kn= \mathfrak{b}^k$, then $\chi_{\kp}(\kn) = 1$ for all $\kp \in \mathcal{P}$ not dividing $\mathfrak{b}$. Hence, we find
  \[
    \begin{aligned}
      \Big| \sum_{\substack{\kp \in \mathcal{P} \\ (\kp, \mathfrak{a})=\mathcal{O}_F}} \chi_{\mathfrak{p}}(\kn) \log \N\kp \Big|^2 + \log^2(\N\mathfrak{a}\kn)=&\Big(\sum_{\substack{\kp \in \mathcal{P} \\ (\kp, \mathfrak{a}\kn)=\mathcal{O}_F}}\log \N\kp \Big)^2 + \log^2(\N\mathfrak{a}\kn)\\
    \gg&\Big(\sum_{\substack{\kp \in \mathcal{P} \\ (\kp, \mathfrak{a}\kn)=\mathcal{O}_F}}\log \N\kp+ \log(\N\mathfrak{a}\kn)\Big)^2\\
        \gg &\Big(\sum_{\kp \in \mathcal{P}}\log \N\kp
        \Big)^2,  
    \end{aligned}
    \]
where the final bound uses the fact that the number of prime ideals dividing $\mathfrak{a}\kn$ is at most $O(\log(\N\mathfrak{a}\kn))$. Here, the set $\mathcal{P}$ consists of prime ideals $\kp$ splitting completely in $L$ with norm $\N\kp \leq P$. The sum can thus be estimated using an effective version of the Chebotarev density theorem. Our final goal is to prove the following key lower bound
\begin{equation} \label{eq-lower-bound}
\sum_{\substack{\mathrm{N}\mathfrak{p} \leq P \\ \mathfrak{p} \text{ splits completely in } L}} \log \mathrm{N}\mathfrak{p}  \gg \frac{P}{D_{F}^{35k^{[F:\Q]+1}}} \quad \text{for } P\gg D_{F}^{35k^{[F:\Q]+1}}.
\end{equation}
To prove this, we apply the effective Chebotarev density theorem of Zaman \cite[Theorem 3.1]{Zaman-2017}, which asserts that for $P \gg D_L^{35}$, 
\begin{equation}\label{eq-split-lower}
 \sum_{\substack{\mathrm{N}\mathfrak{p} \leq P \\ \mathfrak{p} \text{ splits completely in } L}} \log \mathrm{N}\mathfrak{p} \gg \frac{P}{D_L^{19}[L:F]}.
\end{equation}

To further simplify the bound, we relate the discriminant $D_L$ to the base field discriminant $D_F$ via the standard identity \cite[Corollary 2.10]{Neukirch-1999}
\[
D_L = D_F^{[L:F]} \cdot \N\mathcal{D}_{L/F},
\]
where $\mathcal{D}_{L/F}$ is the relative discriminant of the extension $L/F$. In the case $L = F(\zeta_k, \sqrt[k]{\mathcal{O}_F^\times})$, the norm $\N\mathcal{D}_{L/F}\ll_{k,[F:\mathbb{Q}]}1$. This yields the inequality
\[
D_L \ll_{k,[F:\mathbb{Q}]} D_F^{[L:F]}.
\]
For the degree of the extension, we have
\[
[L:F] \le [F(\zeta_k):F] \cdot [F(\sqrt[k]{\mathcal{O}_F^\times}):F].
\]
We next estimate each term separately. The degree of the cyclotomic extension satisfies $[F(\zeta_k):F]=\varphi(k)\leq k$. By Dirichlet's Unit Theorem, the rank of the unit group $\mathcal{O}_F^\times$ is $r = r_1 + r_2 - 1$, where $r_1$ and $2r_2$ are the number of real and complex embeddings of $F$, respectively. So the group $\mathcal{O}_F^\times$ is generated by $r+1$ elements (one generator for the torsion part and $r$ fundamental units). Adjoining the $k$-th root of each of these generators contributes a factor of at most $k$ to the total extension degree. This gives the bound:
\[
[F(\sqrt[k]{\mathcal{O}_F^\times}):F] \leq k^{r+1}.
\]
Combining these gives an upper bound for the degree of the extension
\[
[L:F] \leq k \cdot k^{r+1} = k^{r+2}.
\]
Using the fact that $r+1 = r_1+r_2 \leq r_1+2r_2 = [F:\mathbb{Q}]$, we have $r+2\leq [F:\mathbb{Q}]+1$. Therefore, 
\[
[L:F] \leq k^{r+2} \leq k^{[F:\mathbb{Q}]+1}.
\]
Substituting the estimates for $[L:F]$ and $D_L$ into \eqref{eq-split-lower} yields a lower bound of the form $\gg_{k,[F:\mathbb{Q}]} P / D_F^{19k^{[F:\Q]+1}}$. Since a weaker bound suffices for our application, we may relax the exponent to $35$, obtaining
\[
\sum_{\substack{\mathrm{N}\mathfrak{p} \leq P \\ \mathfrak{p} \text{ splits completely in } L}} \log \mathrm{N}\mathfrak{p}\gg_{k,[F:\mathbb{Q}]} \frac{P}{D_F^{35k^{[F:\Q]+1}}},
\]
which establishes the desired bound in \eqref{eq-lower-bound} and completes the proof.
\end{proof}

\section{Proof of Theorem \ref{thm-main}}
In this section, we apply the power sieve over number fields to study the distribution of Rankin--Selberg coefficients. To the best of our knowledge, our work presents the first application of the power sieve in the context of automorphic forms.

\subsection{Initial estimate from the power sieve}
 Our goal is to estimate the sum over the $k$-th powers
	\[
	S_{\pi}(X) = \sum_{\kn} \lambda_{\pi\times\tilde{\pi}}(\kn^k)V\Big(\frac{\N\kn^k}{X}\Big)
	\]
	for any fixed $k\geq 2$, where $V:\R \rightarrow [0,\infty)$ is a smooth function compactly supported on $[1/2,5/2]$ which is equal to $1$ on the interval $[1,2]$, and satisfies $V^{(j)}\ll_j 1$ for all $j\geq 1$. 
	By partial integration, the Mellin transform of $V$ satisfies
	\begin{equation}\label{mellin}
		\hat{V}(s)=\int_{0}^{\infty} V(x)x^{s-1}\dd x=\frac{1}{s(s+1)\cdots(s+j-1)}\int_{0}^{\infty} V^{(j)}(x)x^{s+j-1} \dd x  \ll |s|^{-j}
	\end{equation}
	for any $j\geq 1$ and $\varepsilon \leq \sigma=\Re s \leq 2$.
 
  We apply Lemma \ref{lem-powersieve} with $f(\kn)$ to be $\lambda_{\pi\times\tilde{\pi}}(\kn)V(\N\kn/X)$, and the parameter $\mathfrak{a}$ to be the arithmetic conductor $\kq_{\pi}$ of $\pi$, and then get 
  \begin{equation}\label{eq-Spi-tochar}
       S_{\pi}(X) \ll  \frac{D_{F}^{70k^{[F:\Q]+1}}}{P^{2}}\sum_{\kn}\lambda_{\pi\times\tilde{\pi}}(\kn)V\Big(\frac{\N\kn}{X}\Big) \Bigg( \Big|\sum_{\substack{\kp \in \mathcal{P} \\ (\kp, \kq_{\pi})=\mathcal{O}_F}} \chi_{\kp}(\kn) \log \N\kp
        \Big|^2+\log^2(2+\N\kq_{\pi}\kn)\Bigg).
  \end{equation}
  Due to the estimate \eqref{eq-Liconvexity} of Li, Rankin's trick yields
  \begin{equation}\label{eq-Libound}
      \sum_{\N\kn\leq X}\lambda_{\pi\times\tilde{\pi}}(\kn)\ll X\sum_{\N\kn\leq X}\frac{\lambda_{\pi\times\tilde{\pi}}(\kn)}{\N\kn^{1+\frac{1}{\log X}}}\ll X L\Big(1+\frac{1}{\log X},\pi\times\tilde{\pi}\Big) \ll C(\pi)^\varepsilon X.
  \end{equation}
  Thus, the contributions of term $\log^2(2+\N\kq_{\pi}\kn)$ is 
  \begin{equation}\label{eq-contr-sec}
       \ll \Big(\log^2(X\N\kq_{\pi})\Big)\sum_{\kn}\lambda_{\pi\times\tilde{\pi}}(\kn)V\Big(\frac{\N\kn}{X}\Big)\ll C(\pi)^\varepsilon X\log^2 X.
  \end{equation}
  We next express 
	\[
	S = \sum_{\kn} \lambda_{\pi\times\tilde{\pi}}(\kn)V\Big(\frac{\N\kn}{X}\Big)\Big|\sum_{\substack{\kp \in \mathcal{P} \\ (\kp, \kq_{\pi})=\mathcal{O}_F}} \chi_{\mathfrak{p}}(\kn) \log \N\kp
        \Big|^2.
	\]
	Expanding $S$, we get
	\[
	\begin{aligned}
		S &= \sum_{\substack{\kp,\kq \in \mathcal{P} \\ (\kp\kq, \kq_{\pi})=\mathcal{O}_F}}\log \N\kp \log \N\kq \sum_{\kn} \lambda_{\pi\times\tilde{\pi}}(\kn) \chi_{\kp}(\kn) \overline{\chi_{\kq}(\kn)}V\Big(\frac{\N\kn}{X}\Big) \\
		&\ll P (\log P) \sum_{\kn} \lambda_{\pi\times\tilde{\pi}}(\kn) V\Big(\frac{\N\kn}{X}\Big)+ P^2 \max_{\substack{\kp,\kq\in \mathcal{P}\\\kp\neq\kq\\(\kp\kq,\kq_{\pi})=1}} \bigg| \sum_{\kn} \lambda_{\pi\times\tilde{\pi}}(\kn) \psi_{\kp\kq}(\kn)  V\Big(\frac{\N\kn}{X}\Big)\bigg|,
	\end{aligned}
	\]
	where $\psi_{\kp\kq}=\chi_{\kp} \overline{\chi_{\kq}}$ is a primitive ray class character modulo $\kp\kq$. Next, it suffices to estimate the sum in absolute on the right-hand side. 
	
	By Mellin's inverse transform, we can write
	\begin{equation}\label{eq-mellin}
		 \sum_{\kn} \lambda_{\pi\times\tilde{\pi}}(\kn) \psi_{\kp\kq}(\kn)  V\Big(\frac{\N\kn}{X}\Big)=\frac{1}{2\pi i}\int_{(2)}L\big(s,\pi\times\tilde{\pi}(\psi_{\kp\kq})\big)X^{s}\hat{V}(s)\dd s.
	\end{equation}
	Moving the vertical line of integration in \eqref{eq-mellin} to $\Re s=\varepsilon$, we obtain by Cauchy's theorem, \eqref{mellin} and the convexity \eqref{eq-twist-convexity} that
    \[
    \sum_{\kn} \lambda_{\pi\times\tilde{\pi}}(\kn) \psi_{\kp\kq}(\kn)  V\Big(\frac{\N\kn}{X}\Big)\ll P^{n^2}C(\pi)^{n}(P C(\pi) X)^{\varepsilon}.
    \]
Together this with the estimate \eqref{eq-Libound}, we get
\[
S\ll (PX+ P^{n^2+2}C(\pi)^{n})(P C(\pi) X)^{\varepsilon}.
\]
Inserting this and \eqref{eq-contr-sec} into \eqref{eq-Spi-tochar}, we derive the crude estimate
    \[
    S_{\pi}(X) \ll D_{F}^{70k^{[F:\Q]+1}}(P^{-1}X+ P^{n^2}C(\pi)^{n})(PC(\pi)X)^{\varepsilon} \ll C(\pi)^{n+70k^{[F:\Q]+1}+\varepsilon} X^{1-\frac{1}{n^2+1}+2\varepsilon}
    \]
on taking $P=X^{1/(n^2+1)}$ and using the fact that $D_F\leq C(\pi)$ by definition. By dyadic decomposition and the definition of $V$, we then get
\[
\sum_{\N\kn\leq X} \lambda_{\pi\times\tilde{\pi}}(\kn^k)\ll C(\pi)^{n+70k^{[F:\Q]+1}+\varepsilon} X^{k(1-\frac{1}{n^2+1}+3\varepsilon)}. 
\]
Therefore, this yields from partial summation that for any fixed $k\geq 2$,
\begin{equation}\label{eq-allupper}
\sum_{\kn}\frac{\lambda_{\pi\times\tilde{\pi}}(\kn^k)}{\N\kn^{k\sigma_0}}\ll C(\pi)^{n+70k^{[F:\Q]+1}+\varepsilon}
\end{equation}
with $\sigma_0=1-\frac{1}{n^2+1}+4\varepsilon$.

\subsection{Iterative process}
We follow an idea of Iwaniec \cite{Iwaniec-1990} to iteratively reduce the exponent in the $C(\pi)$-aspect. For convenience, we restrict our attention to sums over square-free integral ideals, which simplifies the argument and facilitates the iteration process. We define $F$-M\"obius function $\mu_{F}$ via 
\begin{equation*}
\mu_{F}(\kn)=\left\{\begin{aligned}
&(-1)^{r}  \;\;\;\,\text{ if }\kn=\kp_{1}\kp_{2}\cdots \kp_{r} \text{ for some }r\in\mathbb{N}, \\
&1 \,\;\;\;\quad\quad  \text{ if }\kn=\mathcal{O}_F,\\
&0 \,\;\;\;\quad\quad  \text{ otherwise.}
\end{aligned}
\right.
\end{equation*}
Thus $\mu_{F}$ is multiplicative, supported on non-zero, square-free integral ideals. We get from \eqref{eq-allupper} the initial bound 
\begin{equation}\label{eq-muupper}
\sum_{\kn}\frac{\mu_{F}^2(\kn)\lambda_{\pi\times\tilde{\pi}}(\kn^k)}{\N\kn^{k\sigma_0}}\ll C(\pi)^{n+70k^{[F:\Q]+1}+\varepsilon}.
\end{equation}
By multiplicativity of the function $\lambda_{\pi\times\tilde{\pi}}(\kn)$, one has, for $\kn_1$ and $\kn_2$ two square-free integral ideals, 
\[
\lambda_{\pi\times\tilde{\pi}}(\kn_{1}^k)\lambda_{\pi\times\tilde{\pi}}(\kn_{2}^k)=\lambda_{\pi\times\tilde{\pi}}(\kd^{k})^2 \lambda_{\pi\times\tilde{\pi}}\left(\frac{\kn_{1}^k \kn_{2}^k}{\kd^{2k}} \right),
\]
where $\kd=\left(\kn_1, \kn_2\right)$. Consider the Dirichlet series 
\[
D(k\sigma)=\sum_{\kn}\frac{\mu_{F}^2(\kn)\lambda_{\pi\times\tilde{\pi}}(\kn^k)}{\N\kn^{k\sigma}}
\]
for $\sigma\geq \sigma_0$. Hence, we deduce from \eqref{eq-Liconvexity} and \eqref{eq-LRS-2} that
\[
\begin{aligned}
D(k\sigma)^2\leq& \left(\sum_{\kd} \frac{\mu_{F}^2(\kd)\lambda_{\pi\times\tilde{\pi}}(\kd^{k})^2}{\N\kd^{2k\sigma}}\right)\sum_{\kn}\frac{\mu_{F}^2(\kn) \,d(\kn)\lambda_{\pi\times\tilde{\pi}}(\kn^k)}{\N\kn^{k\sigma}} \\
\leq& D(k\sigma-\varepsilon)\sum_{\kd} \frac{\mu_{F}^2(\kd)\lambda_{\pi\times\tilde{\pi}}(\kd^{k})^2}{\N\kd^{2k\sigma_0}}\\
	\ll& D(k\sigma-\varepsilon)\sum_{\kd} \frac{\mu_{F}^2(\kd)\lambda_{\pi\times\tilde{\pi}}(\kd^{k})}{\N\kd^{k+\varepsilon/2}}\cdot \frac{\lambda_{\pi\times\tilde{\pi}}(\kd^{k})}{\N\kd^{k(1-\frac{2}{n^2+1})+\varepsilon/2}}  \\
	\ll& D(k\sigma-\varepsilon)\sum_{\kd} \frac{\lambda_{\pi\times\tilde{\pi}}(\kd)}{\N\kd^{1+\varepsilon/2k}}\ll C(\pi)^{\varepsilon}D(k\sigma-\varepsilon),
\end{aligned}
\]
where $d(\kn)$ denotes the number of integral ideal divisors of $\kn$, and satisfies $d(\kn)\ll \N\kn^{\varepsilon}$ for any $\varepsilon>0$.
By repeating this process, we arrive at the general form of the iterative inequality
\[
D(k\sigma)^{2^l}\ll C(\pi)^{(2^l-1)\varepsilon}D(k\sigma-l\varepsilon)\ll C(\pi)^{n+70k^{[F:\Q]+1}+2^l\varepsilon}
\]
for any positive integer $l$ and $\sigma-l\varepsilon/k\geq \sigma_0$, where the latter inequality is deduced from \eqref{eq-muupper}. Taking the root of degree $2^l$ with $l=\lceil \log_2((n+70k^{[F:\Q]+1})/\varepsilon) \rceil=O(\varepsilon^{-1/2})$, we obtain the bound 
\[
\sum_{\kn}\frac{\mu_{F}^2(\kn)\lambda_{\pi\times\tilde{\pi}}(\kn^k)}{\N\kn^{k\sigma}}\ll C(\pi)^{2\varepsilon}
\]
for any $\sigma\geq \sigma_0+\sqrt{\varepsilon}$. By the multiplicativity of $\lambda_{\pi\times\tilde{\pi}}(\kn)$, we further deduce  
    \begin{equation}\label{eq-prod-fixk}
        \prod_{\kp}\left(1+ \frac{\lambda_{\pi\times\tilde{\pi}}(\kp^k)}{\N\kp^{k\sigma}}\right)\ll C(\pi)^{2\varepsilon}
    \end{equation}
for any fixed $k\geq 2$ and any $\sigma\geq \sigma_0+\sqrt{\varepsilon}$. 

\subsection{Final conclusions}
Applying the bound \eqref{eq-LRS-2}, we derive
\[
\lambda_{\pi\times\tilde{\pi}}(\kp^k)\N\kp^{-k\sigma_0}\ll \N\kp^{k\big(1-\frac{2}{n^2+1}+\varepsilon-\sigma_0\big)}\ll \N\kp^{-\frac{k}{n^2+1}}
\]
for any $k\geq n^2+2$. Summing over such $k$, it follows that
\[
\sum_{k\geq n^2+2}\frac{\lambda_{\pi\times\tilde{\pi}}(\kp^k)}{\N\kp^{k\sigma_0}}\ll \sum_{k\geq n^2+2} \N\kp^{-\frac{k}{n^2+1}}\ll \N\kp^{-1-\frac{1}{n^2+1}}.
\]
Combining this with the estimate \eqref{eq-prod-fixk}, we deduce
\[
\begin{aligned}
	\prod_{\kp}\left(1+\sum_{k=2}^{\infty} \frac{\lambda_{\pi\times\tilde{\pi}}(\kp^k)}{\N\kp^{k\sigma}}\right)\ll& \prod_{\kp}\left(1+\sum_{2\leq k\leq n^2+1} \frac{\lambda_{\pi\times\tilde{\pi}}(\kp^k)}{\N\kp^{k\sigma}}+O\Big(\N\kp^{-1-\frac{1}{n^2+1}}\Big)\right)\\
	\ll& \prod_{2\leq k\leq n^2+1}\prod_{\kp}\left(1+ \frac{\lambda_{\pi\times\tilde{\pi}}(\kp^k)}{\N\kp^{k\sigma}}\right)\\
	\ll& C(\pi)^{2n^2\varepsilon},
\end{aligned}
\]
for any $\sigma\geq \sigma_0+\sqrt{\varepsilon}$. After noticing $2n^2\varepsilon\leq \sqrt{\varepsilon}$ for sufficiently small $\varepsilon$, and then replacing $4\varepsilon+\sqrt{\varepsilon}$ by $\varepsilon$, the proof of the first assertion in Theorem \ref{thm-main} is complete.

We now turn to the second assertion, concerning the coefficients $a_{\pi\times\tilde{\pi}}(\kn)$. By a similar argument and again using \eqref{eq-LRS-2}, we find that
\[
\sum_{k\geq n^2+2}\frac{a_{\pi\times\tilde{\pi}}(\kp^k)}{\N\kp^{k\sigma}}\ll \sum_{k\geq n^2+2} \N\kp^{-\frac{k}{n^2+1}}\ll \N\kp^{-1-\frac{1}{n^2+1}}
\]
for any $\sigma \geq 1-\frac{1}{n^2+1}+\varepsilon$. Moreover, by the recurrence relation \eqref{re-3-lambda}, we have
\[
a_{\pi\times\tilde{\pi}}(\kp^k)\leq (n^2+1)\lambda_{\pi\times\tilde{\pi}}(\kp^k)
\]
for any $k\leq n^2+1$. Therefore, we get from the first assertion that 
\[
\begin{aligned}
	\prod_{\kp}\left(1+\sum_{k=2}^{\infty} \frac{a_{\pi\times\tilde{\pi}}(\kp^k)}{\N\kp^{k\sigma}}\right)\ll& \prod_{\kp}\left(1+(n^2+1)\sum_{2\leq k\leq n^2+1} \frac{\lambda_{\pi\times\tilde{\pi}}(\kp^k)}{\N\kp^{k\sigma}}+O\Big(\N\kp^{-1-\frac{1}{n^2+1}}\Big)\right)\\
	\ll& \prod_{\kp}\left(1+ \sum_{2\leq k\leq n^2+1}\frac{\lambda_{\pi\times\tilde{\pi}}(\kp^k)}{\N\kp^{k\sigma}}\right)^{n^2+1}\\
	\ll& C(\pi)^{(n^2+1)\varepsilon}.
\end{aligned}
\]
for any $\sigma \geq 1-\frac{1}{n^2+1}+\varepsilon$. The second assertion is derived from relabeling the constant $(n^2+1)\varepsilon$ as a new, arbitrarily small $\varepsilon$.
With both assertions established, the proof of Theorem \ref{thm-main} is now complete.
\vskip 5mm

\section{Unconditional Correlation of Automorphic $L$-function Zeros}

Our Theorem \ref{thm-HH}, which establishes Hypothesis H for automorphic representations of arbitrary degree over any number field $F$, 
has numerous important applications in analytic number theory. As a primary and foundational application, we demonstrate how our result impacts the seminal work of Rudnick and Sarnak on the distribution of zeros of automorphic $L$-functions. While our proof of Hypothesis H is general, we will restrict the discussion to the setting of the rational field $\Q$ to adhere to the original framework of Rudnick and Sarnak.

In their 1996 paper \cite{RS-1996}, extending the results of Montgomery, Rudnick and Sarnak investigated the fine structure of the distribution of non-trivial zeros of primitive principal $L$-functions. These $L$-functions, denoted $L(s, \pi)$, are associated with cuspidal automorphic representations of $\mathrm{GL}_n(\mathbb{A}_{\Q})$. The non-trivial zeros of $L(s, \pi)$, denoted $\rho_{\pi}$, are located in the critical strip. Following Rudnick and Sarnak, we write them as $\rho_{\pi} = 1/2 + i\gamma_{\pi}$, which defines $\gamma_{\pi} := -i(\rho_{\pi} - 1/2)$. If the Generalized Riemann Hypothesis (GRH) for $L(s, \pi)$ holds, all $\gamma_{\pi}$ are real numbers; otherwise, some may be complex.

The central conjecture, supported by numerical evidence, is that the statistical distribution of these zeros follows the Gaussian Unitary Ensemble (GUE) from random matrix theory. Rudnick and Sarnak provided the first major theoretical evidence for this by computing the $m$-level correlation functions. However, their results were conditional for degrees $n\geq 4$ on the assumption of Hypothesis H.

Our main theorem, which establishes Hypothesis H for all degrees, removes this condition. To state the results of Rudnick and Sarnak precisely, we first define the necessary functions and correlation sums. Let $m \geq 2$ denote the level of correlation. We define the test functions and correlation sums as follows.

\begin{enumerate}[label=(\roman*)]
    \item Test function: A smooth test function $f: \mathbb{C}^m \to \mathbb{C}$ satisfies the following:
        \begin{enumerate}[label=(\alph*)]
            \item $f(x_1,\ldots,x_m)$ is symmetric in its variables.
            \item $f(x+t(1,...,1)) = f(x)$ for all $t \in \mathbb{C}$.
            \item $f(x)$ decays rapidly as $|x|\to \infty$ in the hyperplane $\sum_{j=1}^m x_{j}=0$.
        \end{enumerate}
    \item Cutoff function: A cutoff function $h: \mathbb{C} \to \mathbb{C}$ is an entire function, constructed as the Fourier transform of a smooth, compactly supported function $g \in C_c^{\infty}(\R)$.

    \item Correlation sums: Using these functions, two types of $m$-level correlation sums can be defined.
    \begin{enumerate}[label=(\alph*)]
        \item The smoothed $m$-level correlation sum, which does not require GRH, is given by
        \[
        R_{m}(T,f,h)=\sideset{}{^{\prime}}{\sum}_{j_{1},...,j_{m}} h\left(\frac{\gamma_{\pi, j_{1}}}{T}\right)\cdots h\left(\frac{\gamma_{\pi, j_{m}}}{T}\right)f\left(\frac{L}{2\pi}\gamma_{\pi, j_{1}},...,\frac{L}{2\pi}\gamma_{\pi, j_{m}}\right),
        \]
        where the sum is over distinct indices and $L = n \log T$.
        \item To define the $m$-level correlation sum, we require the GRH. In this case, the imaginary parts $\gamma_{\pi,j}$ are real and ordered by value: $0 < \gamma_{\pi,1} \leq \gamma_{\pi,2} \leq \cdots$. The zeros are then normalized as $\tilde{\gamma}_{\pi,j}=(n/2\pi)\gamma_{\pi,j}\log|\gamma_{\pi,j}|$. Let $B_N = \{\tilde{\gamma}_{\pi,1}, \dots, \tilde{\gamma}_{\pi,N}\}$ be this set of normalized zeros. The sum is defined as
        \[
        R_{m}(B_{N},f) = \frac{m!}{N}\sum_{S\subseteq B_N, |S|=m}f(S).
        \]
    \end{enumerate}
\end{enumerate}

With these objects defined, the results of Rudnick and Sarnak can be stated as follows.

\begin{theorem}\label{thm-RS-smooth}
Let $\pi \in \mathfrak{F}_n$ over $\Q$, and let $f$ and $h$ be functions as defined above. Assume the support of $\hat{f}$ is contained in $\sum_{j=1}^{m} |\xi_j| < 2/n$. Then as $T \to \infty$,
\[
R_{m}(T,f,h) \sim \frac{n}{2\pi}T \log T \int_{-\infty}^{\infty}h(r)^{m}\dd r \int_{\R^{m}}f(x)W_{m}(x)\delta\left(\frac{x_{1}+\dots+x_{m}}{m}\right)\dd x_{1}\cdots \dd x_{m},
\]
where $W_m(x)$ is the $m$-level correlation density for the GUE model, defined by $W_m(x) = \det(K(x_i-x_j))$ with $K(x) = \frac{\sin(\pi x)}{\pi x}$, and $\delta(x)$ is the Dirac mass at zero.
\end{theorem}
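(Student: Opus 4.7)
The plan is to follow the strategy of Rudnick and Sarnak \cite{RS-1996}, which is completely general in its structure but depended on Hypothesis H as a technical input; with Theorem \ref{thm-HH} now unconditional, the same argument extends to arbitrary degree $n$. The first step is to Fourier-expand $f$ on the hyperplane $\{\sum_k x_k = 0\}$, writing $f(x) = \int \hat{f}(\xi)\exp(2\pi i\,\xi\cdot x)\,d\sigma(\xi)$, so that $R_m(T,f,h)$ becomes an integral against $\hat{f}$ of the sums
\[
\sideset{}{^{\prime}}{\sum}_{j_1,\ldots,j_m}\prod_{k=1}^{m} h(\gamma_{\pi,j_k}/T)\,\exp\!\Big(iL\,\xi_k\,\gamma_{\pi,j_k}\Big).
\]
The distinctness condition on indices is then removed by inclusion--exclusion, producing an alternating sum over set partitions of $\{1,\ldots,m\}$, each summand being an unrestricted product of sums over individual zeros.

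The second step is to apply the Weil explicit formula for $L(s,\pi)$ to each factor. This converts $\sum_\gamma h(\gamma/T)\exp(iL\xi\gamma)$ into an archimedean main term of size $T\log T$ proportional to $\int h(r)\,dr$, minus an arithmetic term of the shape
\[
\sum_{n=1}^{\infty}\frac{\Lambda(n)(a_\pi(n)+\overline{a_\pi(n)})}{\sqrt{n}}\,\Phi_{\xi,T}(\log n),
\]
where $\Phi_{\xi,T}$ is an explicit smooth transform of $h$ whose Fourier support is concentrated in a window of width $1/T$ around frequencies determined by $\xi L$. Substituting into each of the $m$ factors and expanding yields: a pure archimedean product, which supplies the $\int h(r)^m\,dr$ factor of the target asymptotic, together with cross terms carrying one or more arithmetic sums.

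The decisive third step is the analysis of these cross terms via Rankin--Selberg theory. When all active variables $n_k$ are primes, the only pairings that survive are those in which two prime variables coincide; these contributions are governed by the simple pole of $L(s,\pi\times\tilde\pi)$ at $s=1$, while non-dual Rankin--Selberg $L$-functions $L(s,\pi\times\pi')$ are entire and contribute negligibly. The support restriction $\sum_k|\xi_k|<2/n$ is exactly what permits the requisite contour shifts past $s=1$ and confines the oscillatory supports of the $\Phi_{\xi_k,T}$ to the regime where these poles can be extracted. Integrated against $\hat{f}$ over the hyperplane, this produces the GUE density $W_m(x)=\det(K(x_i-x_j))$ together with the $\delta$-mass enforcing $\sum_k x_k/m=0$. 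The remaining cross terms, in which at least one variable is a prime power $p^{\nu}$ with $\nu\geq 2$, are controlled by Cauchy--Schwarz against
\[
\sum_p \frac{(\log p)^2\,|a_\pi(p^{\nu})|^2}{p^{\nu}},
\]
which is finite by Theorem \ref{thm-HH}. Such contributions lose at least a full power of $\log T$ relative to the main term and are negligible.

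The main obstacle, now dissolved, is precisely the convergence of these prime-power sums for $n\geq 5$, which in \cite{RS-1996} was assumed as Hypothesis H and in subsequent work was verified case-by-case via functoriality only up through $n=4$. With Theorem \ref{thm-HH} providing Hypothesis H for every $\pi\in\mathfrak{F}_n$, the remainder of the Rudnick--Sarnak argument (the combinatorics of partitions, the Rankin--Selberg pole analysis, and the identification of the GUE kernel) carries over verbatim, yielding the asserted asymptotic unconditionally.
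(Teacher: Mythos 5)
Your proposal is correct and takes the same route as the paper: the paper's proof of this theorem consists precisely of citing Theorem 1.1 of Rudnick--Sarnak and observing that its only conditional input for $n\geq 4$, namely Hypothesis H, is now supplied unconditionally by Theorem \ref{thm-HH}. Your additional sketch of the internal Rudnick--Sarnak machinery (explicit formula, Rankin--Selberg pole analysis, and the role of Hypothesis H in killing the prime-power cross terms) is an accurate account of where the hypothesis enters, but it is not needed beyond the citation.
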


\begin{proof}
This is Theorem 1.1 in Rudnick and Sarnak \cite{RS-1996}, whose proof is conditional on either $n \leq 3$ or the validity of Hypothesis H. Since Our Theorem \ref{thm-HH} establishes Hypothesis H for all degrees $n$, the result now holds unconditionally.
\end{proof}

\begin{theorem}
With the notation in Theorem \ref{thm-RS-smooth} and also GRH for $L(s,\pi)$. Then as $N \to \infty$, 
\[
R_{m}(B_{N},f) \rightarrow \int_{\R^{m}}f(x)W_{m}(x)\delta\left(\frac{x_{1}+\dots+x_{m}}{m}\right)\dd x_{1}\cdots \dd x_{m}.
\]
\end{theorem}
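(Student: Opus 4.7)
The plan is to deduce the ordered-zero correlation from the smoothed asymptotic of Theorem~\ref{thm-RS-smooth} by a standard approximation procedure, which is now unconditional for all degrees since Theorem~\ref{thm-HH} removes the dependence on Hypothesis H. Two ingredients enter. The first is the Weyl-type zero count $N(T,\pi):=\#\{j:0<\gamma_{\pi,j}\leq T\}=\frac{n}{2\pi}T\log T+O(T)$, which tethers the index $N$ to a height $T$ via $N\sim \frac{n}{2\pi}T\log T$. The second is the translation invariance of $f$ along $(1,\ldots,1)$ together with its rapid decay on the hyperplane $\sum x_j=0$, which allow the zero-dependent normalization $\tilde{\gamma}_{\pi,j}=\frac{n}{2\pi}\gamma_{\pi,j}\log|\gamma_{\pi,j}|$ to be replaced, up to vanishing error, by the uniform normalization $\frac{L}{2\pi}\gamma_{\pi,j}$ with $L=n\log T$ used in Theorem~\ref{thm-RS-smooth}.

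First I would select $T=T(N)$ with $N(T,\pi)=N$ and pick admissible cutoffs $h_\delta^\pm\geq 0$ (from the class permitted by Theorem~\ref{thm-RS-smooth}) that sandwich the indicator of the positive-zero window $(0,1]$, with $h_\delta^\pm\equiv 1$ on a slightly shrunken interval, decaying rapidly outside a slightly enlarged one, and $\int(h_\delta^\pm)^m\dd r\to 1$ as $\delta\to 0$ after a standard normalization. Rewriting
\[
R_m(B_N,f)=\frac{1}{N}\sideset{}{^{\prime}}{\sum}_{j_{1},\ldots,j_{m}\leq N}f\bigl(\tilde{\gamma}_{\pi,j_{1}},\ldots,\tilde{\gamma}_{\pi,j_{m}}\bigr),
\]
and sandwiching by the analogous sums with $h_\delta^\pm(\gamma_{\pi,j_i}/T)$ inserted, Theorem~\ref{thm-RS-smooth} yields
\[
\frac{1}{N}R_m(T,f,h_\delta^\pm)\sim \frac{n T\log T}{2\pi N}\int_{-\infty}^{\infty}(h_\delta^\pm)^m\dd r\cdot \int_{\R^m} fW_m\,\delta\!\left(\tfrac{x_1+\cdots+x_m}{m}\right)\dd x.
\]
The prefactor $\frac{nT\log T}{2\pi N}$ tends to $1$ by Weyl's law, and the $(h_\delta^\pm)^m$ integral tends to $1$ as $\delta\to 0$, producing the claimed right-hand side. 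The reconciliation of the two normalizations rests on
\[
\tilde{\gamma}_{\pi,j}-\tfrac{L}{2\pi}\gamma_{\pi,j}=\tfrac{n}{2\pi}\gamma_{\pi,j}\bigl(\log|\gamma_{\pi,j}|-\log T\bigr),
\]
which, on any $m$-tuple in the bulk $|\gamma_{\pi,j}|\asymp T$, is nearly a scalar multiple of $(1,\ldots,1)$: translation invariance of $f$ absorbs the common shift, while the off-diagonal residue of order $O(\log\log T/\log T)$ is absorbed by the Schwartz-type decay of $f$ along $\sum x_j=0$.

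The main obstacle is the boundary discrepancy: tuples containing zeros within $O(\delta T)$ of the endpoint $T$, together with any local clustering of normalized zeros near the index $N$, must be shown to contribute $o(1)$. The sandwich reduces this to the upper bound $R_m(T,f,h)\ll T\log T$ for nonnegative admissible $h$, itself an immediate consequence of applying Theorem~\ref{thm-RS-smooth} to a nonnegative test function, showing that the boundary contribution is $O(\delta)$ times the main term and hence vanishes under the joint limit $T\to\infty$ followed by $\delta\to 0$. Tuples containing small zeros near $0$ are handled symmetrically via the decay of $f$ on the hyperplane. The support condition $\sum_{j}|\xi_j|<2/n$ on $\hat{f}$ is inherited verbatim from Theorem~\ref{thm-RS-smooth}, so no new hypothesis is introduced and the argument is complete.
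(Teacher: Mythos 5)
Your argument is correct and is essentially the same as the paper's: the paper simply cites Rudnick--Sarnak's own deduction of their Theorem 1.2 from Theorem 1.1 (which requires GRH to handle the cutoff and is now unconditional in the smoothed input by Theorem \ref{thm-HH}), and your sandwiching by Paley--Wiener cutoffs, Weyl's law, and the reconciliation of the two normalizations via translation invariance is precisely that standard deduction written out.
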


\begin{proof}
This is Theorem 1.2 of Rudnick and Sarnak \cite{RS-1996}. They deduce this result from their Theorem 1.1. This deduction requires assuming the GRH to handle the cutoff function.
\end{proof}

These results establish the conjectured connection between the statistics of zeros of automorphic $L$-functions and random matrix theory. Rudnick and Sarnak proved that GUE statistics are a universal feature of primitive $L$-functions. This universality is notable, as the distribution of the coefficients $a_\pi(p)$ is not universal. Our verification of Hypothesis H for all degrees $n$ thus resolves the conditional nature of this foundational result for $n\geq 4$.

\section{Properties of the na\"ive convolution}

In this section, we investigate the analytic behavior of the Dirichlet series
\[ 
L(s,\pi \otimes \pi') = \sum_{\kn} \lambda_{\pi}(\kn) \lambda_{\pi'}(\kn) \N\kn^{-s},
\]
which is referred to as the na\"ive convolution of $\pi$ and $\pi'$ \cite{DK-2000}. This series naturally arises in analytic number theory, frequently appearing as the main term in the so-called ``diagonal contribution" to problems such as moment estimates or large sieve inequalities (see \cite{DK-2000,KR-2014}, for example).

To analyze this series effectively, it is crucial to relate it to the more structured Rankin–Selberg convolution $L(s, \pi \times \pi')$. This connection is made by introducing a correction factor $H(s, \pi, \pi')$, defined locally for each prime ideal $\kp$ by the identity
	\[
L(s,\pi_{\kp}\otimes\pi_{\kp}')=L(s,\pi_{\kp}\times\pi_{\kp}')H(s,\pi_{\kp}, \pi_{\kp}').
	\]
The global factor is then given by the Euler product $H(s,\pi, \pi'):=\prod_{\kp} H(s,\pi_{\kp}, \pi_{\kp}')$. This factor allows us to express the na\"ive convolution in terms of the Rankin--Selberg $L$-function
        \[
L(s,\pi\otimes\pi')=L(s,\pi\times\pi')H(s,\pi, \pi').
	\]
This factorization makes $H(s, \pi, \pi')$ a powerful analytic tool, and its properties are presented in the following lemma.

\begin{lemma}\label{lem-RSdeco}
	Let $\pi, \pi'\in\mathfrak{F}_n$.
	The Euler product $H(s,\pi, \pi')$ converges absolutely for $\Re(s)> 1-(n^2+1)^{-1}$, and in this region we have the factorization
	\[
L(s,\pi\otimes\pi')=L(s,\pi\times\pi')H(s,\pi, \pi').
	\]
Moreover, the following two bounds hold:
    \begin{itemize}
        \item For any $\varepsilon > 0$, we have
        \[
        H(s,\pi, \pi')\ll_{n,[F:\Q],\varepsilon}(C(\pi)C(\pi'))^{\varepsilon} \quad \text{for}\;\Re(s)\geq 1-(n^2+1)^{-1}+\varepsilon.
        \]
        \item In the special case $\pi'=\tilde{\pi}$ and $s=1$, we have the lower bound 
        \[
        H(1,\pi, \tilde{\pi})\gg_{n,[F:\Q],\varepsilon}C(\pi)^{-\varepsilon}.
        \]
    \end{itemize}
\end{lemma}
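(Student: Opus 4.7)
The plan is to prove the three parts of the lemma via local analysis and a careful appeal to Theorem~\ref{thm-main}. I would first set up the local factorization: for each prime $\kp$, define the local factor
\[
H(s,\pi_\kp,\pi'_\kp) := \frac{L_\kp(s,\pi\otimes\pi')}{L(s,\pi_\kp\times\pi'_\kp)}, \qquad L_\kp(s,\pi\otimes\pi') := \sum_{k\geq 0}\lambda_\pi(\kp^k)\lambda_{\pi'}(\kp^k)\N\kp^{-ks},
\]
and the global object $H(s,\pi,\pi') := \prod_\kp H(s,\pi_\kp,\pi'_\kp)$. Expanding as a power series in $\N\kp^{-s}$ gives the coefficients $e_{\kp,k}=\sum_{l=0}^k\lambda_\pi(\kp^l)\lambda_{\pi'}(\kp^l)\mu_{\pi\times\pi'}(\kp^{k-l})$. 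The crucial observation is that at unramified $\kp$ the identity $\lambda_\pi(\kp)\lambda_{\pi'}(\kp)=\lambda_{\pi\times\pi'}(\kp)$ (from \eqref{eq-RS-unramified}) forces $e_{\kp,0}=1$ and $e_{\kp,1}=0$, so only $k\geq 2$ contribute.

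For convergence and the upper bound, I would write $H(s,\pi_\kp,\pi'_\kp)-1=(L_\kp(s,\pi\otimes\pi')-L(s,\pi_\kp\times\pi'_\kp))\cdot L(s,\pi_\kp\times\pi'_\kp)^{-1}$. Using the vanishing of the $k=1$ term together with \eqref{ineq-2rs}, \eqref{ineq-22rs}, Lemma~\ref{lem-ineq}, and $\sqrt{ab}\leq(a+b)/2$, the first factor is bounded by $\sum_{k\geq 2}(\lambda_{\pi\times\tilde\pi}(\kp^k)+\lambda_{\pi'\times\tilde{\pi'}}(\kp^k))\N\kp^{-k\sigma}$. The reciprocal factor is uniformly $O_n(1)$ throughout $\sigma\geq 1-(n^2+1)^{-1}+\varepsilon$, by expanding $L(s,\pi_\kp\times\pi'_\kp)^{-1}=\prod_{i,j}(1-\alpha_{i,j,\pi\times\pi'}(\kp)\N\kp^{-s})$ and applying the Luo--Rudnick--Sarnak bound \eqref{eq-LRS-2}. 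Taking the product over $\kp$ and using $\log(1+a+b)\leq\log(1+a)+\log(1+b)$ to split the contributions reduces matters to the Euler products $\prod_\kp(1+\sum_{k\geq 2}\lambda_{\pi\times\tilde\pi}(\kp^k)/\N\kp^{k\sigma})$ and its analog for $\pi'$; Theorem~\ref{thm-main} bounds each by $C(\pi)^\varepsilon$ and $C(\pi')^\varepsilon$ respectively, yielding $|H(s,\pi,\pi')|\ll(C(\pi)C(\pi'))^\varepsilon$ after relabeling $\varepsilon$. Absolute convergence of the Euler product follows from the same estimate. The finitely many ramified contributions are treated using that each $|H(s,\pi_\kp,\pi'_\kp)|$ is bounded independently of $\kp$, combined with $\omega(\kq_\pi\kq_{\pi'})\ll \log(C(\pi)C(\pi'))/\log\log(C(\pi)C(\pi'))$.

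For the lower bound at $\pi'=\tilde\pi$ and $s=1$, I would exploit nonnegativity. From \eqref{ineq-2rs} we have $L_\kp(1,\pi\otimes\tilde\pi)\leq L(1,\pi_\kp\times\tilde\pi_\kp)$, hence $0<H(1,\pi_\kp,\tilde\pi_\kp)\leq 1$. The sharp refinement at unramified $\kp$ is the algebraic inequality
\[
H(1,\pi_\kp,\tilde\pi_\kp)\;\geq\;\Big(1+\sum_{k\geq 2}\frac{\lambda_{\pi\times\tilde\pi}(\kp^k)}{\N\kp^k}\Big)^{-1},
\]
which I would prove by clearing denominators and using the unramified equality $|\lambda_\pi(\kp)|^2=\lambda_{\pi\times\tilde\pi}(\kp)$ to reduce the statement to a sum of manifestly nonnegative summands. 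Taking logarithms, summing over unramified $\kp$, and invoking Theorem~\ref{thm-main} gives
\[
-\log H(1,\pi,\tilde\pi)\;\leq\;\log\prod_\kp\Big(1+\sum_{k\geq 2}\frac{\lambda_{\pi\times\tilde\pi}(\kp^k)}{\N\kp^k}\Big)\;\leq\;\varepsilon\log C(\pi),
\]
i.e., $H(1,\pi,\tilde\pi)\gg C(\pi)^{-\varepsilon}$. The finitely many ramified contributions are handled by the cruder uniform bound $H(1,\pi_\kp,\tilde\pi_\kp)\gg L_\kp(1,\pi\times\tilde\pi)^{-1}$ combined with the same control on $\omega(\kq_\pi)$.

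The main obstacle is preserving the correct functional form of the bound. A naive application of $\log(1+x)\leq x$ followed by summation would produce only $\sum_\kp x_\kp\ll C(\pi)^\varepsilon$ via \eqref{eq-ineq-sumprod}, which translates to the far weaker conclusions $|H|\ll\exp(C^\varepsilon)$ and $H\gg\exp(-C^\varepsilon)$. The $\log(1+\cdot)$ structure must be preserved on both sides so that the product bound in Theorem~\ref{thm-main} can be used directly in its logarithmic form. Equally critical is the cancellation of the $k=1$ coefficient at unramified primes; without it, the contribution of $\lambda_{\pi\times\tilde\pi}(\kp)\N\kp^{-1}$ would propagate through the estimate and the resulting pole of $L(s,\pi\times\tilde\pi)$ at $s=1$ would spoil both bounds.
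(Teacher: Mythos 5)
Your proposal is correct, and in two places it takes a genuinely different route from the paper's own proof. For the upper bound, the paper expands each local factor $H(s,\pi_{\kp},\pi_{\kp}')$ as a Dirichlet series whose coefficients $b_{\pi}(k,\kp)$ are convolutions of $\lambda_{\pi}\lambda_{\pi'}$ with the reciprocal coefficients $\mu_{\pi\times\pi'}$; this forces it to invoke Lemma~\ref{lem-ineq}, to split the range of $k$ at $n^2+1$, and to absorb a residual term of size $\lambda_{\pi\times\tilde{\pi}}(\kp)\N\kp^{-1-\varepsilon}$ via Li's bound \eqref{eq-Liconvexity}. Your factorization $H_{\kp}-1=\bigl(L_{\kp}(s,\pi\otimes\pi')-L(s,\pi_{\kp}\times\pi_{\kp}')\bigr)L(s,\pi_{\kp}\times\pi_{\kp}')^{-1}$, with the reciprocal factor bounded by $O_n(1)$ via \eqref{eq-LRS-2} throughout $\Re(s)\geq 1-(n^2+1)^{-1}+\varepsilon$, sidesteps all of this: only \eqref{ineq-2rs} and \eqref{ineq-22rs} are needed (Lemma~\ref{lem-ineq}, which you cite, is actually superfluous in your scheme), the vanishing of the $k=1$ coefficient at unramified $\kp$ kills the dangerous linear term outright, and Theorem~\ref{thm-main} finishes. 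For the lower bound, the paper expands $H(s,\pi_{\kp},\tilde{\pi}_{\kp})^{-1}$ into coefficients $d_{\pi}(k,\kp)$, checks $d_{\pi}(1,\kp)=0$, and asserts that the rest is ``entirely parallel''---which leaves unstated how one controls the coefficients $\mu_{\pi\otimes\tilde{\pi}}(\kp^{m})$ of the reciprocal of the na\"ive convolution. Your positivity argument---$L_{\kp}(1,\pi\otimes\tilde{\pi})\geq 1+\lambda_{\pi\times\tilde{\pi}}(\kp)\N\kp^{-1}$ at unramified $\kp$, whence $H_{\kp}(1)\geq\bigl(1+\sum_{k\geq 2}\lambda_{\pi\times\tilde{\pi}}(\kp^{k})\N\kp^{-k}\bigr)^{-1}$ by multiplying out two series with nonnegative terms---is cleaner and, if anything, more complete than the paper's sketch. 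Your treatment of the ramified primes ($O_n(1)$ bounds on each local factor, raised to the power $\omega(\kq_{\pi}\kq_{\pi'})\ll\log(C(\pi)C(\pi'))/\log\log(C(\pi)C(\pi'))$) matches the paper in substance, and your closing remarks correctly identify the two points (preserving the logarithmic form of the product bound, and the $k=1$ cancellation) where a careless argument would fail.
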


\begin{remark}
The analytic properties of $H(s, \pi, \pi')$ are key to understanding the behavior of the na\"ive convolution. This factor has been studied extensively in prior work, notably by Duke and Kowalski \cite{DK-2000} and Kowalski and Ricotta \cite[Section 5]{KR-2014}. In the latter, the argument depends on the analytic continuation of $H(s,\pi,\pi')$ to the region $\Re(s)>1/2+\delta_n$. However, a direct estimation of the coefficients using only the bound of Luo, Rudnick and Sarnak is insufficient to achieve this. Indeed, a careful calculation shows the coefficient of the ${\N\kp}^{-2s}$ term in the Euler product is of size $O({\N\kp}^{4\delta_n})$, which yields a region of convergence of only $\Re s > 1/2 + 2\delta_n$. Our analysis in Lemma~\ref{lem-RSdeco}, which relies on the new input from Theorem \ref{thm-main}, is therefore essential to fill this analytic gap.
\end{remark}

\begin{remark}\label{rem-naive-mu}
An analogous statement holds when the na\"ive convolution of $\pi$ and $\pi'$ is restricted to square-free integral ideals
\[ 
L^{\flat}(s,\pi \otimes \pi') = \sum_{\kn} \mu_{F}(\kn)^2\lambda_{\pi}(\kn) \lambda_{\pi'}(\kn) \N\kn^{-s},
\]
where $\mu_F$ is the M\"obius function on integral ideals as before. By the same argument of Lemma \ref{lem-RSdeco}, we obtain the factorization 
\[
L^{\flat}(s,\pi \otimes \pi') = L(s,\pi\times\pi')G(s,\pi, \pi'),
\]
where $G(s,\pi, \pi')$ satisfies the same analytic properties as $H(s,\pi, \pi')$.
\end{remark}

\begin{proof}
We compare the na\"ive convolution $L(s,\pi\otimes\pi')$ with the Rankin--Selberg convolution $L(s,\pi\times\pi')$. The Rankin--Selberg convolution has an Euler product by the general theory, and the na\"ive convolution also has one because it is a Dirichlet series whose coefficients are multiplicative. For each prime ideal $\kp$, we have
	\[
	L(s,\pi_{\kp}\otimes\pi_{\kp}')= \sum_{k=0}^{\infty}  \lambda_{\pi}(\kp^k) \lambda_{\pi'}(\kp^k) \N\kp^{-ks},
	\]
while the inverse of the local factor $L(s,\pi_{\kp}\times\pi_{\kp}')$ admits an expansion
  \[	L(s,\pi_{\kp}\times\pi_{\kp}')^{-1}= \sum_{k=0}^{\infty}   \mu_{\pi\times\pi'}(\kp^k)\N\kp^{-ks}. 
  \]
  Note that $\mu_{\pi\times\pi'}(\kp^k)=0$ for $k>nn'$. Thus, the quotient
  \[
H(s,\pi_{\kp}, \pi_{\kp}')=\frac{L(s,\pi_{\kp}\otimes\pi_{\kp}')}{L(s,\pi_{\kp}\times\pi_{\kp}')}= \sum_{k=0}^{\infty} b_{\pi}(k,\kp) \N\kp^{-ks},
  \]
  where the coefficient $b_{\pi}(k,\kp)$ is given by 
  \[
  b_{\pi}(k,\kp)=\sum_{\nu=0}^k \lambda_{\pi}(\kp^{\nu}) \lambda_{\pi'}(\kp^{\nu}) \mu_{\pi\times\pi'}(\kp^{k-\nu}).
  \]
 
  For the prime ideals $\kp$ with $\kp\nmid \kq_{\pi}\kq_{\pi'}$, it follows from \eqref{eq-RS-unramified} and the definitions of $\lambda_{\pi}(\kn), \mu_{\pi\times\pi'}(\kn)$ that 
  \[
  b_{\pi}(1,\kp)=\lambda_{\pi}(\kp) \lambda_{\pi'}(\kp)+\mu_{\pi\times\pi'}(\kp)=0.
  \]
The estimation of the higher order terms proceeds as follows. For $k\geq n^2+2$, applying the bound \eqref{eq-LRS-2}, we obtain
 \[
 b_{\pi}(k,\kp)\N\kp^{-k\sigma_0}\ll \N\kp^{k(2\delta_{n}-\sigma_1)}\ll \N\kp^{-\frac{k}{n^2+1}},
 \]
 where $\sigma_1=1-(n^2+1)^{-1}+\varepsilon$. Summing over such $k$, we deduce
 \[
 \sum_{k\geq n^2+2}\frac{|b_{\pi}(k,\kp)|}{\N\kp^{k\sigma_1}}\ll \sum_{k\geq n^2+2} \N\kp^{-\frac{k}{n^2+1}}\ll \N\kp^{-1-\frac{1}{n^2+1}}.
 \]
 Combining the estimates above, the product over the unramified prime ideals satisfies 
  \begin{equation}\label{eq-H-unramified}
      \begin{aligned}
 \prod_{\kp\nmid \kq_{\pi}\kq_{\pi'}}H(s,\pi_{\kp}, \pi_{\kp}')\ll&\prod_{\kp\nmid \kq_{\pi}\kq_{\pi'}}\left(1+\sum_{k=2}^{n^2+1}\frac{|b_{\pi}(k,\kp)|}{\N\kp^{k\sigma_1}}\right)\left(1+\sum_{k\geq n^2+2}\frac{|b_{\pi}(k,\kp)|}{\N\kp^{k\sigma_1}}\right)\\
 \ll&\prod_{\kp\nmid \kq_{\pi}\kq_{\pi'}}\left(1+\sum_{k=2}^{n^2+1}\frac{|b_{\pi}(k,\kp)|}{\N\kp^{k\sigma_1}}\right).
 \end{aligned}
  \end{equation}
  
 Next, we treat the terms with $2\leq k\leq n^2+1$. By the inequalities in \eqref{ineq-2rs} and Lemma \ref{lem-ineq}, we have 
  \[
  \begin{aligned}
  	|b_{\pi}(k,\kp)|\leq& \sum_{\nu=0}^k \big|\lambda_{\pi}(\kp^{\nu}) \lambda_{\pi'}(\kp^{\nu}) \mu_{\pi\times\pi'}(\kp^{k-\nu})\big|\\
  	\leq& \frac{1}{2}\sum_{\nu=0}^k \left(\lambda_{\pi\times\tilde{\pi}}(\kp^{\nu})\lambda_{\pi\times{\tilde{\pi}}}(\kp^{k-\nu})+\lambda_{\pi'\times\tilde{\pi}'}(\kp^{\nu})\lambda_{\pi'\times{\tilde{\pi}'}}(\kp^{k-\nu}) \right)\\
  	\leq&  \lambda_{\pi\times{\tilde{\pi}}}(\kp^{k})+\lambda_{\pi'\times{\tilde{\pi}'}}(\kp^{k})+\frac{1}{2}\sum_{\nu=1}^{k-1} \left(\lambda_{\pi\times\tilde{\pi}}(\kp^{\nu})+\lambda_{\pi'\times\tilde{\pi}'}(\kp^{\nu}) \right)\N\kp^{2(k-\nu)(\delta_{n}+\varepsilon)}.
  \end{aligned}
  \] 
  Therefore, summing over $k$ in the range $2\leq k\leq n^2+1$, we find
  \[
  \begin{aligned}
  	\sum_{k=2}^{n^2+1}\frac{|b_{\pi}(k,\kp)|}{\N\kp^{k\sigma_1}}&\leq \sum_{k=2}^{n^2+1}\frac{\lambda_{\pi\times{\tilde{\pi}}}(\kp^{k})+\lambda_{\pi'\times{\tilde{\pi}'}}(\kp^{k})}{\N\kp^{k\sigma_1}}\\ &\quad+\sum_{\nu=1}^{n^2} \frac{\lambda_{\pi\times{\tilde{\pi}}}(\kp^{\nu})+\lambda_{\pi'\times{\tilde{\pi}'}}(\kp^{\nu})}{\N\kp^{\nu\sigma_1}} \sum_{k=\nu+1}^{n^2+1}\N\kp^{(k-\nu)(2\delta_{n}+2\varepsilon-\sigma_1)}\\
  	&\ll \sum_{k=2}^{n^2+1}\frac{\lambda_{\pi\times{\tilde{\pi}}}(\kp^{k})+\lambda_{\pi'\times{\tilde{\pi}'}}(\kp^{k})}{\N\kp^{k\sigma_1}} +\sum_{\nu=1}^{n^2} \frac{\lambda_{\pi\times{\tilde{\pi}}}(\kp^{\nu})+\lambda_{\pi'\times{\tilde{\pi}'}}(\kp^{\nu})}{\N\kp^{\nu\sigma_1+\frac{1}{n^2+1}}}\\
  	&\ll \frac{\lambda_{\pi\times{\tilde{\pi}}}(\kp)+\lambda_{\pi'\times{\tilde{\pi}'}}(\kp)}{\N\kp^{1+\varepsilon}}+ \sum_{k=2}^{n^2+1}\frac{\lambda_{\pi\times{\tilde{\pi}}}(\kp^{k})+\lambda_{\pi'\times{\tilde{\pi}'}}(\kp^{k})}{\N\kp^{k\sigma_1}}.
  \end{aligned}
  \] 
Inserting this into \eqref{eq-H-unramified}, and using the estimate \eqref{eq-Liconvexity} and Theorem \ref{thm-main}, the product over the unramified primes satisfies 
  \[
  \begin{aligned}
 \prod_{\kp\nmid \kq_{\pi}\kq_{\pi'}}H(s,\pi_{\kp}, \pi_{\kp}')\ll&\prod_{\kp}\left(1+O\left(\frac{\lambda_{\pi\times{\tilde{\pi}}}(\kp)+\lambda_{\pi'\times{\tilde{\pi}'}}(\kp)}{\N\kp^{1+\varepsilon}}+ \sum_{k=2}^{n^2+1}\frac{\lambda_{\pi\times{\tilde{\pi}}}(\kp^{k})+\lambda_{\pi'\times{\tilde{\pi}'}}(\kp^{k})}{\N\kp^{k\sigma_1}}\right)\right)\\
 \ll& (C(\pi)C(\pi'))^{\varepsilon}
 \end{aligned}
  \]
  for $\Re s\geq \sigma_1$. 
  
  Finally, we consider the contribution from the ramified prime ideals. Using the bound \eqref{eq-LRS-2}, we derive
 \[
 \prod_{\kp\mid \kq_{\pi}\kq_{\pi'}}H(s,\pi_{\kp}, \pi_{\kp}')\ll \prod_{\kp\mid \kq_{\pi}\kq_{\pi'}}\left(\sum_{k=0}^{\infty} \N\kp^{-\frac{k}{n^2+1}}\right)\ll(\N\kq_{\pi}\kq_{\pi'})^{\varepsilon}
 \]
 for $\Re s\geq \sigma_1$, which establishes the desired upper bound.

Next we turn to the proof of the lower bound for $H(1, \pi, \tilde{\pi})$. This is equivalent to proving the upper bound $H(1,\pi, \tilde{\pi})^{-1}\ll C(\pi)^{\varepsilon}$. 
The local factor of the inverse is
\[
H(s,\pi_{\kp}, \tilde{\pi}_{\kp})^{-1} = \frac{L(s,\pi_{\kp}\times\tilde{\pi}_{\kp})}{L(s,\pi_{\kp}\otimes\tilde{\pi}_{\kp})} := \sum_{k=0}^{\infty} d_{\pi}(k,\kp) \N\kp^{-ks}.
\]
Its coefficient $d_{\pi}(k,\kp)$ is the convolution
\[
d_{\pi}(k,\kp) = \sum_{\nu=0}^k \lambda_{\pi\times\tilde{\pi}}(\kp^{\nu}) \mu_{\pi\otimes\tilde{\pi}}(\kp^{k-\nu}),
\]
where the coefficients $\mu_{\pi\otimes\tilde{\pi}}(\kp^{m})$ of $L(s,\pi_{\kp}\otimes\tilde{\pi}_{\kp})^{-1}$ are defined by the recursive identity
\[
\mu_{\pi\otimes\tilde{\pi}}(\kp^{m})=-\sum_{j=1}^{m} |\lambda_{\pi}(\kp^j)|^2 \mu_{\pi\otimes\tilde{\pi}}(\kp^{m-j}), \quad \text{with}\;\mu_{\pi\otimes\tilde{\pi}}(1)=1.
\]
For unramified prime ideals $\kp \nmid \kq_{\pi}$, we have $d(0,\kp)=1$. Crucially, for $k=1$,
\[
d_{\pi}(1,\kp) = \lambda_{\pi\times\tilde{\pi}}(\kp) + \mu_{\pi\otimes\tilde{\pi}}(\kp) = |\lambda_{\pi}(\kp)|^2 - |\lambda_{\pi}(\kp)|^2 = 0.
\]
The vanishing of the linear term $d_{\pi}(1,\kp)$ is the key. The remainder of the argument, bounding the higher-order terms and the contribution from the ramified primes, proceeds in a manner entirely parallel to the upper bound estimate.
This yields the bound
\[
H(1,\pi, \tilde{\pi})^{-1} \ll C(\pi)^{\varepsilon}.
\]
This completes the proof.
\end{proof}

\subsection{Applications: Strong multiplicity one for ${\rm GL}_n$}

The strong multiplicity one theorem quantifies a fundamental principle in the theory of automorphic forms: a cuspidal automorphic representation is uniquely determined by its local components. The analytic version of this principle asks: how far must one search among the local data to distinguish two distinct representations?

This paper focuses on a specific formulation of this question, cast directly in terms of the coefficients of the associated $L$-functions. For each cuspidal automorphic representation $\pi \in \mathfrak{F}_n$, its analytic conductor $C(\pi) \geq 1$ measures its arithmetic and spectral complexity. We define the bounded family
\[
\mathfrak{F}_n(Q) := \left\{ \pi \in \mathfrak{F}_n : C(\pi) \leq Q \right\}.
\]
The question we address is the following (see \cite[Section 5.1]{Michel-2007}).

\begin{problem}
Given two distinct representations $\pi, \pi' \in \mathfrak{F}_n(Q)$, what is the best upper bound for
\[
N(\pi, \pi') := \min\left\{ \N\mathfrak{n} : \lambda_\pi(\mathfrak{n}) \neq \lambda_{\pi'}(\mathfrak{n}) \right\}?
\]
\end{problem}

This problem is motivated by the need to quantify the minimal amount of data required to distinguish automorphic representations. Historically, several methods have been developed, particularly for $\mathrm{GL}_2$ over $\mathbb{Q}$. Goldfeld and Hoffstein~\cite{GH-1993} proved that two distinct holomorphic newforms are distinguished by their first $\ll Q^2 \log Q$ Fourier coefficients, unconditionally. Moreover, as described by Michel \cite[Section 5.1]{Michel-2007}, other approaches for $\mathrm{GL}_2$ include a geometric method based on the Riemann--Roch theorem, and an analytic method relying on Rankin--Selberg $L$-functions. Among these, the $L$-function approach is the most flexible and amenable to generalization to higher rank.

In the general $\mathrm{GL}_n$ context, prior results have mainly addressed a related but different question: bounding the norm of the smallest prime ideal $\mathfrak{p}$ such that $\pi_\mathfrak{p} \not\cong \pi'_\mathfrak{p}$. This implies the existence of $k \geq 1$ such that $\lambda_\pi(\mathfrak{p}^k) \neq \lambda_{\pi'}(\mathfrak{p}^k)$, but it does not directly control the smallest integral ideal $\mathfrak{n}$ at which the coefficients differ, since the exponent $k$ could be arbitrarily large.

In this local formulation, Moreno \cite{Moreno-1985} first established the existence of a prime ideal $\mathfrak{p}$ such that $\pi_\mathfrak{p} \not\cong \pi'_\mathfrak{p}$, with an explicit upper bound of the form
\[
\N\mathfrak{p} \ll_{n,[F:\mathbb{Q}]} \exp\left(c (\log Q)^2\right),
\]
where $c$ is a constant depending only on $n,[F:\mathbb{Q}]$. This subexponential bound was later improved by Brumley \cite{Brumley-2006-ajm}, who obtained the first polynomial bound, showing that
\[
\N\mathfrak{p} \ll_{n,[F:\mathbb{Q}],\varepsilon} Q^{\frac{17}{2}n - 4 + \varepsilon}.
\]
Subsequently, Liu and Wang \cite{LW-2009} sharpened this further to
\[
\N\mathfrak{p} \ll_{n,[F:\mathbb{Q}],\varepsilon} Q^{2n + \varepsilon},
\]
which remains the best known bound in the local representation setting.

However, all these results concern only the smallest prime ideal $\mathfrak{p}$ where the local components $\pi_\mathfrak{p}$ and $\pi'_\mathfrak{p}$ differ. They do not directly yield an upper bound for the smallest integral ideal $\mathfrak{n}$ such that $\lambda_\pi(\mathfrak{n}) \neq \lambda_{\pi'}(\mathfrak{n})$, because the distinction in coefficients $\lambda_\pi(\kp^k) \neq \lambda_{\pi'}(\kp^k)$ may only hold for an arbitrarily large exponent $k$. While the methods of Goldfeld--Hoffstein and Michel  give effective bounds for this stronger problem in the $\mathrm{GL}_2$ case, no such results were previously known for $\mathrm{GL}_n$ with $n \geq 3$.

We shall address this problem by proving the first explicit polynomial bound for distinguishing automorphic representations on $\mathrm{GL}_n$ by their $L$-function coefficients, thus generalizing the classical result of Goldfeld and Hoffstein to higher rank.

\begin{theorem}\label{thm-SMO}
Let $\pi, \pi' \in \mathfrak{F}_n(Q)$ be distinct cuspidal automorphic representations. Then we have
\[
N(\pi, \pi') \ll Q^{7n^3-5n^2+8n-5+\varepsilon},
\]
where the implied constant depends only on $[F:\Q],n$ and $\varepsilon$.
\end{theorem}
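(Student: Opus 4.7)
For distinct $\pi, \pi' \in \mathfrak{F}_n(Q)$, write $N := N(\pi, \pi')$ and probe by the non-negative sum
\[
S(X) := \sum_{\kn} \mu_F(\kn)^2 \bigl|\lambda_\pi(\kn) - \lambda_{\pi'}(\kn)\bigr|^2 V\!\left(\frac{\N\kn}{X}\right),
\]
where $V$ is the smooth weight of Section 5 supported in $[1/2, 5/2]$. If $5X/2 < N$, then $\lambda_\pi(\kn) = \lambda_{\pi'}(\kn)$ throughout the support of $V(\N\kn/X)$, so $S(X) = 0$. The plan is to derive a contradictory lower bound on $S(X)$ once $X$ exceeds $Q^{7n^3-5n^2+8n-5+\varepsilon}$, which then forces $N$ to obey the same bound.

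Expanding the square via $\overline{\lambda_{\pi}(\kn)} = \lambda_{\tilde{\pi}}(\kn)$ and applying Mellin inversion rewrites $S(X)$ on $\Re s = 2$ as
\[
\frac{1}{2\pi i}\int_{(2)} \bigl[L^{\flat}(s,\pi\otimes\tilde{\pi})+L^{\flat}(s,\pi'\otimes\tilde{\pi}')-L^{\flat}(s,\pi\otimes\tilde{\pi}')-L^{\flat}(s,\pi'\otimes\tilde{\pi})\bigr] X^{s}\hat{V}(s)\,\dd s.
\]
By Remark \ref{rem-naive-mu}, each na\"ive convolution factors as $L^{\flat}(s,\pi_1\otimes\tilde{\pi}_2)=L(s,\pi_1\times\tilde{\pi}_2)\,G(s,\pi_1,\tilde{\pi}_2)$, where $G$ is analytic and bounded by $Q^{\varepsilon}$ on $\Re s\geq \sigma_1:=1-(n^2+1)^{-1}+\varepsilon$. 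Shifting the contour from $\Re s=2$ down to $\Re s=\sigma_1$ picks up residues only from the diagonal Rankin--Selberg $L$-functions at $s=1$: since $\pi\neq\pi'$ and both central characters are unitary, no relation $\pi'\simeq\pi\otimes|\cdot|^{it}$ is possible, so the cross terms $L(s,\pi\times\tilde{\pi}')$ and $L(s,\pi'\times\tilde{\pi})$ are entire. The resulting main term
\[
X\hat{V}(1)\bigl\{\Res_{s=1}L(s,\pi\times\tilde{\pi})\cdot G(1,\pi,\tilde{\pi})+\Res_{s=1}L(s,\pi'\times\tilde{\pi}')\cdot G(1,\pi',\tilde{\pi}')\bigr\}
\]
is strictly positive; using the lower bound $G(1,\pi,\tilde{\pi})\gg C(\pi)^{-\varepsilon}$ from Lemma \ref{lem-RSdeco} (and Remark \ref{rem-naive-mu}) together with a polynomial lower bound on the Rankin--Selberg residue of the shape $\Res_{s=1}L(s,\pi\times\tilde{\pi})\gg C(\pi)^{-(7n-5)-\varepsilon}$ produces main term $\gg X\,Q^{-(7n-5)-\varepsilon}$.

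For the shifted integral on $\Re s=\sigma_1$, the convexity bound in Lemma \ref{lem-convexity} combined with $C(\pi_1\times\tilde{\pi}_2)\ll Q^{2n}$ from \eqref{eq-BH} gives $L(\sigma_1+it,\pi_1\times\tilde{\pi}_2)\ll Q^{n/(n^2+1)+\varepsilon}(1+|t|)^{O(1)}$; pairing this with the upper bound on $G$ and the rapid decay of $\hat{V}$ produces an overall contribution of $\ll Q^{n/(n^2+1)+\varepsilon}X^{1-1/(n^2+1)+\varepsilon}$. Setting $S(X)=0$ and balancing these bounds yields
\[
X\ll Q^{(7n-5)(n^2+1)+n+\varepsilon}=Q^{7n^3-5n^2+8n-5+\varepsilon},
\]
and hence $N(\pi,\pi')\leq 5X/2$ obeys the claimed bound. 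The hard part is securing the polynomial residue bound $\Res_{s=1}L(s,\pi\times\tilde{\pi})\gg C(\pi)^{-(7n-5)-\varepsilon}$ with the sharp exponent $7n-5$, which requires a careful quantitative refinement of Hoffstein--Ramakrishnan-type lower bounds with explicit tracking of the conductor dependence; all remaining analytic inputs -- the convexity bound, the na\"ive convolution factorization, and the control of $G(1,\cdot,\cdot)$ -- are supplied by Sections 2--5 and depend ultimately on our main Theorem \ref{thm-main}.
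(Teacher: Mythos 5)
Your argument is essentially the paper's: probe with a smoothed na\"ive convolution, factor it through the Rankin--Selberg $L$-function via the correction factor of Lemma \ref{lem-RSdeco} (Remark \ref{rem-naive-mu} in your square-free variant), shift to $\Re s=\sigma_1$, use convexity plus \eqref{eq-BH} for the upper bound, and play this off against a main term controlled by Brumley's residue lower bound $\Res_{s=1}L(s,\pi\times\tilde{\pi})\gg C(\pi)^{-7n+5-\varepsilon}$ (which, incidentally, is not an open "hard part" -- it is already in the literature, in Brumley's appendix to Lapid's article, and is what the paper cites). Your symmetrized non-negative sum $\sum\mu_F(\kn)^2|\lambda_\pi(\kn)-\lambda_{\pi'}(\kn)|^2V(\N\kn/X)$ is a clean variant of the paper's comparison of $\Sigma_V(\pi,\pi',X)$ with $\Sigma_V(\pi,\pi,X)$, and the exponent bookkeeping $(7n-5)(n^2+1)+n=7n^3-5n^2+8n-5$ matches.

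There is, however, one genuine error. You claim that because both central characters are unitary, "no relation $\pi'\simeq\pi\otimes|\cdot|^{it}$ is possible," and hence the cross terms $L(s,\pi\times\tilde{\pi}')$, $L(s,\pi'\times\tilde{\pi})$ are entire. This is false: if $\pi'=\pi\otimes|\cdot|^{it}$ with $t\neq 0$, then $\omega_{\pi'}=\omega_\pi|\cdot|^{int}$ is still unitary, so $\pi$ and $\pi'$ are distinct members of $\mathfrak{F}_n(Q)$, yet $\tilde{\pi}'=\tilde{\pi}\otimes|\cdot|^{-it}$ and the cross Rankin--Selberg $L$-functions acquire simple poles at $s=1\pm it$. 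Your contour shift then picks up oscillating residues $\asymp X^{1\pm it}$ which can nearly cancel the diagonal main term: indeed in this case $|\lambda_\pi(\kn)-\lambda_{\pi'}(\kn)|^2=|\lambda_\pi(\kn)|^2\,|1-\N\kn^{it}|^2$, which for small $|t|$ makes $S(X)$ genuinely of size $\asymp t^2 X(\log X)^2$ rather than $\asymp X$, so no contradiction is reached and the argument as written fails. The fix is exactly what the paper does: split off the case $\pi'\cong\pi\otimes|\cdot|^{it}$, $t\neq 0$, at the outset and observe that $N(\pi,\pi')$ is then trivially bounded (the coefficients already differ at a bounded ideal), and run your analytic argument only when $\pi'\not\cong\pi\otimes|\cdot|^{it}$ for every $t$, which is precisely the condition ensuring the cross terms are entire. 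With that case distinction inserted, your proof goes through.
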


\begin{remark}
The method of Theorem \ref{thm-SMO} also guarantees that the first distinction between $\pi$ and $\pi'$ can be found at a prime ideal. Specifically, there exists a prime ideal $\kp$ satisfying
\[
\N\kp \ll_{n, [F:\mathbb{Q}]} Q^{7n^3-5n^2+8n-5+\varepsilon},\quad\text{and} \quad \lambda_\pi(\kp) \neq \lambda_{\pi'}(\kp).
\]
Indeed, this follows by first applying the argument to the smoothed sum restricted to square-free ideals
\[
\sum_{\kn} \mu_{F}(\kn)^2\lambda_\pi(\mathfrak{n})\overline{\lambda_{\pi'}(\mathfrak{n})}V\Big(\frac{\N\kn}{X}\Big),
\]
and using the analytic properties of $L^{\flat}(s, \pi \otimes \pi')$ (see Remark \ref{rem-naive-mu}). This establishes the existence of a square-free integral ideal $\kn$ such that
\[
\N\kn \ll_{n, [F:\mathbb{Q}]} Q^{7n^3-5n^2+8n-5+\varepsilon},\quad\text{and} \quad \lambda_\pi(\kn) \neq \lambda_{\pi'}(\kn).
\]
By the multiplicativity of the $L$-function coefficients, such an ideal must have at least one prime factor $\kp$ for which $\lambda_\pi(\kp) \neq \lambda_{\pi'}(\kp)$, and this $\kp$ satisfies $\N\kp \leq \N\kn$.
\end{remark}

\begin{proof}
The proof proceeds by considering two cases for the distinct $\pi$ and $\pi'$. First, suppose that $\pi = \pi' \otimes |\cdot|^{it}$ for some $t \in \mathbb{R} \setminus \{0\}$. In this case, these coefficients are related by $\lambda_\pi(\mathfrak{n}) = \lambda_{\pi'}(\mathfrak{n})\N\mathfrak{n}^{it}$. Since $t \neq 0$, they are not equal for any ideal $\mathfrak{n} \neq \mathcal{O}_F$. This satisfies the bound trivially.

We therefore assume that $\pi\not \cong \pi'$. 
Consider the smooth sum
\[
\Sigma_{V}(\pi,\pi',X)=\sum_{n}\lambda_\pi(\mathfrak{n})\overline{\lambda_{\pi'}(\mathfrak{n})}V\Big(\frac{\N\kn}{X}\Big),
\]
where $V$ is a suitable smooth function as in Section 5.1. By Mellin's inverse transform, we can write
	\begin{equation}\label{eq-mellin2}
		 \Sigma_{V}(\pi,\pi',X)=\frac{1}{2\pi i}\int_{(2)}L(s,\pi\otimes\tilde{\pi}')X^{s}\hat{V}(s)\dd s.
	\end{equation}
Since $\pi \not\cong \pi'$, the $L$-function $L(s,\pi\times\tilde{\pi}')$ is entire. We shift the line of integration in \eqref{eq-mellin2} to $\sigma_1 = 1-(n^2+1)^{-1}+\varepsilon$. By Cauchy's theorem, we get
\[ 
\begin{aligned} 
\Sigma_{V}(\pi,\pi',X) &\ll X^{\sigma_1}\int_{-\infty}^\infty \big|L(\sigma_1+it,\pi\otimes\tilde{\pi}')\hat{V}(\sigma_1+it)\big|\dd t \\ &= X^{\sigma_1}\int_{-\infty}^\infty \big|L(\sigma_1+it,\pi\times\tilde{\pi}')H(\sigma_1+it,\pi,\tilde{\pi}')\hat{V}(\sigma_1+it)\big|\dd t \\ &\ll X^{\sigma_1} \sup_{t} |H(\sigma_1+it,\pi,\tilde{\pi}')| \int_{-\infty}^\infty \big|L(\sigma_1+it,\pi\times\tilde{\pi}')\hat{V}(\sigma_1+it)\big|\dd t.
\end{aligned} 
\]
We apply the bounds from Lemma \ref{lem-RSdeco} for $H(s,\pi,\tilde{\pi}')$ and Lemma \ref{lem-convexity} for $L(s,\pi\times\tilde{\pi}')$. This yields the upper bound
   \[
        \Sigma_{V}(\pi,\pi',X) \ll Q^{\varepsilon} C(\pi\times\tilde{\pi}')^{\frac{1-\sigma_{1}}{2}}X^{\sigma_1}.
   \]

On the other hand, let us choose $X=N(\pi, \pi')/3$ so that the support of $V$ is contained in the region where $\lambda_\pi(\mathfrak{n}) = \lambda_{\pi'}(\mathfrak{n})$. Under this assumption, the sum becomes 
\[
\Sigma_{V}(\pi,\pi',X)=\Sigma_{V}(\pi,\pi,X)=\sum_{n}|\lambda_\pi(\mathfrak{n})|^2V\Big(\frac{\N\kn}{X}\Big).
\]
For this sum, the corresponding $L$-function $L(s,\pi\otimes\tilde{\pi})$ has a simple pole at $s=1$ arising from $L(s,\pi\times\tilde{\pi})$. Shifting the contour to $\sigma_1$ as before, we now pick up a residue yielding a main term and get
    \[
     \Sigma_{V}(\pi,\pi',X) = X\hat{V}(1)H(1,\pi,\tilde{\pi}) \Res_{s=1}L(s,\pi\times\tilde{\pi}) +O\Big(Q^{\varepsilon} C(\pi\times\tilde{\pi})^{\frac{1-\sigma_1}{2}}X^{\sigma_1} \Big).
    \]

Combining these two estimates for $\Sigma_{V}(\pi,\pi',X)$ and using the bound $C(\pi\times\tilde{\pi}') \ll Q^{2n}$ from \eqref{eq-BH}, we arrive at
\[
X \cdot H(1,\pi,\tilde{\pi}) \Res_{s=1}L(s,\pi\times\tilde{\pi}) \ll Q^{n(1-\sigma_1)+\varepsilon}X^{\sigma_1}.
\]
In \cite[Appendix]{Lapid-2013}, Brumley proves the lower bound
\[
\Res_{s=1}L(s,\pi\times\tilde{\pi})\gg C(\pi)^{-7n+5-\varepsilon}\gg Q^{-7n+5-\varepsilon}.
\]
Together with the lower bound for $H(1,\pi,\tilde{\pi})$ from Lemma \ref{lem-RSdeco}, we get
\[
X \ll Q^{n+(7n-5)(1-\sigma_1)^{-1}+\varepsilon}\ll Q^{7n^3-5n^2+8n-5+\varepsilon}.
\]
Therefore, we obtain $N(\pi, \pi')\ll Q^{n+(7n-5)(1-\sigma_1)^{-1}+\varepsilon}\ll Q^{7n^3-5n^2+8n-5+\varepsilon}$. This completes the proof.
\end{proof}

\section{Prime number theorem for Rankin--Selberg $L$-functions}

\subsection{Correspondence from prime powers to primes}

A central goal of analytic number theory is to understand the distribution of prime numbers. The theory of $L$-functions, via the explicit formula, naturally connects the distribution of their zeros to summatory functions associated with the von Mangoldt function $\Lambda_F(n)$, i.e., weighted sums over all prime ideal powers. From an arithmetic perspective, however, the ultimate object of interest is the corresponding sum taken exclusively over prime ideals.

The transition from a sum over prime powers to a sum over primes only is a standard procedure in analytic number theory. The underlying principle is illustrated in the work of Iwaniec and Kowalski (see \cite[Exercise 6]{IK-2004}), where it is shown that the contribution of higher prime powers ($k\geq 2$) can be effectively controlled. For instance, some classical approaches might rely on a strong hypothesis, such as an $l^2$-bound of the form
\[
\sum_{\N\kn\leq x} |a_{\pi\times \tilde{\pi}}(\kn)|^2\Lambda_F(\kn)\ll x\log C(\pi).
\]
Under such an assumption, one could establish the relation
\[
\sum_{\N\kp\leq x} a_{\pi\times \tilde{\pi}}(\kp)\log \N\kp=\sum_{\N\kn\leq x} a_{\pi\times \tilde{\pi}}(\kn)\Lambda_F(\kn)+O\Big(\sqrt{x}\log C(\pi)\Big).
\]
This establishes a bridge between sums over primes and sums involving the von Mangoldt function. Fortunately, the strong assumption mentioned above is not required for our analysis. We can establish this connection directly, as shown in the following theorem.

\begin{theorem}\label{thm-powercontri}
Let $\pi\in \mathfrak{F}_n$ and $\pi'\in \mathfrak{F}_{n'}$. For any $\varepsilon>0$ and any $0<y\leq x$, we have
\[
\sum_{x-y<\N\kp\leq x} a_{\pi\times \pi'}(\kp)\log \N\kp=\sum_{x-y<\N\kn\leq x} a_{\pi\times \pi'}(\kn)\Lambda_F(\kn)+O\Big(x^{1-\frac{1}{\max\{n,n'\}^2+1}+\varepsilon}\max\{C(\pi),C(\pi')\}^{\varepsilon}\Big),
\]
where the implied constant depends on $n,n',[F:\Q]$ and $\varepsilon$.
\end{theorem}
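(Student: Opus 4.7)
The plan is to show that the difference between the two sums is precisely the contribution of higher prime powers, and to control this contribution by combining the Cauchy--Schwarz inequality with the global Euler product bound from Theorem~\ref{thm-main} via a Rankin-type argument.

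First, I would write
\[
\sum_{x-y<\N\kn\leq x} a_{\pi\times \pi'}(\kn)\Lambda_F(\kn) - \sum_{x-y<\N\kp\leq x} a_{\pi\times \pi'}(\kp)\log \N\kp \;=\; \sum_{k\geq 2}\sum_{\substack{\kp,\ x-y<\N\kp^k\leq x}} a_{\pi\times \pi'}(\kp^k)\log \N\kp,
\]
since $\Lambda_F$ is supported on prime ideal powers. Using the coefficient inequality \eqref{a-ineq} gives
\[
|a_{\pi\times\pi'}(\kp^k)|\leq \sqrt{a_{\pi\times\tilde{\pi}}(\kp^k)\,a_{\pi'\times\tilde{\pi}'}(\kp^k)},
\]
and then the Cauchy--Schwarz inequality (with weight $\log\N\kp$ symmetrically distributed) reduces the problem to bounding, for each representation $\Pi\in\{\pi,\pi'\}$, the non-negative quantity
\[
T_\Pi(x) \;=\; \sum_{k\geq 2}\sum_{\N\kp^k\leq x} a_{\Pi\times\tilde{\Pi}}(\kp^k)\log \N\kp.
\]

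Next, I would apply Rankin's trick: since $a_{\Pi\times\tilde{\Pi}}(\kp^k)\geq 0$, multiplying each term by $(x/\N\kp^k)^\sigma\geq 1$ for $\sigma>0$ in the range of summation gives
\[
T_\Pi(x) \;\leq\; x^{\sigma}\log x \sum_{\kp}\sum_{k\geq 2} \frac{a_{\Pi\times\tilde{\Pi}}(\kp^k)}{\N\kp^{k\sigma}}.
\]
Choosing $\sigma = 1-\frac{1}{n_\Pi^2+1}+\varepsilon$ where $n_\Pi$ is the degree of $\Pi$, and combining the elementary inequality \eqref{eq-ineq-sumprod} with the second assertion of Theorem~\ref{thm-main}, the inner double sum is $\ll C(\Pi)^\varepsilon$. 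Therefore
\[
T_\Pi(x) \;\ll\; x^{1-\frac{1}{n_\Pi^2+1}+\varepsilon}\,C(\Pi)^{\varepsilon}.
\]

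Feeding these bounds for $\Pi=\pi$ and $\Pi=\pi'$ back into the Cauchy--Schwarz estimate yields
\[
\Big|\sum_{k\geq 2}\sum_{\N\kp^k\leq x} a_{\pi\times \pi'}(\kp^k)\log \N\kp\Big| \;\ll\; x^{1-\frac{1}{2}\left(\frac{1}{n^2+1}+\frac{1}{n'^2+1}\right)+\varepsilon}\big(C(\pi)C(\pi')\big)^{\varepsilon}.
\]
Finally, observing that $\tfrac12\bigl(\tfrac1{n^2+1}+\tfrac1{n'^2+1}\bigr)\geq \tfrac{1}{\max\{n,n'\}^2+1}$ (since the harmonic mean dominates the smaller of the two reciprocals) and $(C(\pi)C(\pi'))^\varepsilon\leq \max\{C(\pi),C(\pi')\}^{2\varepsilon}$, one recovers the stated error term after relabelling $\varepsilon$. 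The argument is not technically delicate once Theorem~\ref{thm-main} is in hand; the only mildly subtle point is verifying that the Cauchy--Schwarz split produces the correct exponent $\max\{n,n'\}^2+1$, which is the main source of asymmetry in the final bound and the place where the global product estimate does the heavy lifting.
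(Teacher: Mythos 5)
Your proposal is correct and follows essentially the same route as the paper: isolate the $k\geq 2$ prime-power contribution, control $|a_{\pi\times\pi'}(\kp^k)|$ via \eqref{a-ineq}, and bound $\sum_{k\geq 2}\sum_{\N\kp^k\leq x}a_{\Pi\times\tilde{\Pi}}(\kp^k)\log\N\kp$ by Rankin's trick together with the second assertion of Theorem~\ref{thm-main}. The only (immaterial) difference is that you split the geometric mean by Cauchy--Schwarz across the sum, whereas the paper uses $\sqrt{ab}\leq a+b$ termwise; both give a bound dominated by the stated $x^{1-\frac{1}{\max\{n,n'\}^2+1}+\varepsilon}\max\{C(\pi),C(\pi')\}^{\varepsilon}$.
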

\begin{proof}
The difference between the two sums in the statement is the contribution from higher prime ideal powers ($k \geq 2$)
\[
\Delta=\sum_{k=2}^\infty \sum_{x-y<\N\kp^k\leq x} a_{\pi \times \pi'}(\kp^k)\log \N\kp.
\]
The sum over the short interval is bounded by the sum up to $x$. Using the coefficient inequality \eqref{a-ineq}, we get
\[
|\Delta|\leq \sum_{k=2}^\infty \sum_{\N\kp^k\leq x} \Big(a_{\pi \times \tilde{\pi}}(\kp^k)+a_{\pi' \times \tilde{\pi}'}(\kp^k)\Big)\log \N\kp.
\]
We bound the sum involving $\pi$, as the other part is treated analogously. We may apply Rankin's trick. Let $\sigma_1=1-\frac{1}{n^2+1}+\varepsilon$. We bound this sum as follow
\[
\begin{aligned}
\sum_{k=2}^\infty \sum_{\N\kp^k\leq x} a_{\pi \times \tilde{\pi}}(\kp^k)\log \N\kp &\ll \sum_{\kp}\sum_{k=2}^\infty  \Big(\frac{x}{\N\kp^{k}}\Big)^{\sigma_{1}}a_{\pi \times \tilde{\pi}}(\kp^k)\log\N\kp \\
&\ll x^{\sigma_{1}} \sum_{\kp}\sum_{k=2}^\infty  \frac{a_{\pi \times \tilde{\pi}}(\kp^k)}{\N\kp^{k(\sigma_{1}-\frac{\varepsilon}{2})}}\\
&\ll x^{\sigma_{1}}\prod_{\kp}\left(1+\sum_{k=2}^{\infty} \frac{a_{\pi\times\tilde{\pi}}(\kp^k)}{\N\kp^{k(\sigma_{1}-\frac{\varepsilon}{2})}}\right)\\
&\ll x^{\sigma_{1}}C(\pi)^{\varepsilon},
\end{aligned}
\]
where the last step uses Theorem \ref{thm-main}. A similar bound holds for the sum involving $a_{\pi' \times \tilde{\pi}'}(\kp^k)$. Combining these estimates completes the proof.
\end{proof}

\subsection{Prime number theorems}

The prime number theorem for Rankin--Selberg $L$-functions $L(s, \pi \times \pi')$ seeks an asymptotic formula for the sum of its coefficients over prime ideals. Historically, progress on this problem has faced two main difficulties. The first is the need to assume that one of the automorphic representations is self-contragredient. The second is the difficulty of separating the contribution of primes from that of higher prime powers in sums weighted by the von Mangoldt function.

Previous results over the rational field $\mathbb{Q}$ have addressed these issues separately. For sums over prime powers, weighted by the von Mangoldt function, Liu and Ye \cite{LY-2007} established a prime number theorem for fixed representations with a strong error term of the form $O\left(x \exp\left(-b_{1}\sqrt{\log x}\right)\right)$. However, they need to assume one of the representations was self-contragredient. More recently, the highly uniform results of Kaneko and Thorner \cite{KT-2025} achieved a similar strong error term, with a range of validity polynomial in the conductor, but only under the condition $\pi' \in \{\tilde{\pi}, \tilde{\pi}'\}$. Otherwise, they established an upper bound that is uniform and non-trivial, but very weak. An asymptotic formula for sums over primes was first obtained by Wu and Ye \cite{WY-2007}. They successfully removed the prime power contributions, providing an unconditional result for low-rank cases ($n,n' \leq 4$). For general ranks, their result was conditional on Hypothesis H and yielded a weaker error term of $O(x/\log x)$.

 Our main purpose here is to prove the prime number theorem for Rankin–Selberg $L$-functions over primes in full generality, and to generalize the setting to arbitrary number fields. The key to our approach is Theorem \ref{thm-powercontri}. Our results include uniform estimates and even stronger results for the case of fixed representations.

\subsubsection*{\textbf{Uniform estimates}} We now present effective versions of the Prime Number Theorem for Rankin--Selberg $L$-functions. Let $\mathfrak{F}_{n}^{\flat}\subset \mathfrak{F}_n$ be the set of cuspidal automorphic representations $\pi$ of $\mathrm{GL}_n(\mathbb{A}_{F})$ whose central character is unitary and trivial on the diagonally embedded copy of the positive reals. For any $\pi\in \mathfrak{F}_n$ with a unitary central character, there exist unique $\pi_{0} \in \mathfrak{F}_{n}^{\flat}$ and $t_{\pi}\in \R$ such that $\pi=\pi_{0}\otimes |\cdot|^{it_{\pi}}$. 

Our proof requires a zero-free region for Rankin--Selberg $L$-functions. There exists a constant $c_{1}=c_{1}(n,[F:\Q])>0$ such that $L(s, \pi \times \tilde{\pi})$ is non-zero in the region
\begin{equation}\label{eq-rszerofree-1}
    \operatorname{Re}(s) \geq 1-\frac{c_{1}}{\log \left(C(\pi)(|\operatorname{Im}(s)|+3)^{n^2[F:\Q]}\right)}
\end{equation}
apart from at most one exceptional zero $\beta_1<1$. If such a zero exists, then $\beta_1$ is real and simple. This is a standard result, established in \cite[Theorem A.1]{HB-2019} and \cite[Theorem 2.1]{HT-2022}. 

\begin{theorem}\label{cor-uniformPNT}
 Let $\pi\in \mathfrak{F}_{n}^{\flat}$. For $x\gg C(\pi)^{200[F:\Q]n^3}$, we have
\[
\sum_{\N\kp\leq x} a_{\pi\times \tilde{\pi}}(\kp)\log \N\kp=x-\frac{x^{\beta_1}}{\beta_1}+O\left(x \exp\left(\frac{-c_{2}\log x}{\log (C(\pi))+\sqrt{\log x}}\right)(\log xC(\pi))^{4}\right),
\]
where $\beta_1$ is the possible exceptional zero of $L(s,\pi\times\tilde{\pi})$. The constant $c_{2}$ and the implied constant depend only on $n,[F:\Q]$.
\end{theorem}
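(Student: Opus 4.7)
\begin{proofsection}{Proof proposal for Theorem \ref{cor-uniformPNT}}

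The plan is to combine Theorem \ref{thm-powercontri} with a standard contour shift argument for the logarithmic derivative of $L(s,\pi\times\tilde{\pi})$. First I would apply Theorem \ref{thm-powercontri} with $\pi'=\tilde{\pi}$ and $y=x$ to replace the prime sum by the von Mangoldt weighted sum
\[
\psi(x,\pi\times\tilde{\pi}):=\sum_{\N\kn\leq x} a_{\pi\times\tilde{\pi}}(\kn)\Lambda_F(\kn),
\]
at the cost of an error of size $x^{1-\frac{1}{n^2+1}+\varepsilon}C(\pi)^{\varepsilon}$, which is absorbed by the claimed error term once $x\gg C(\pi)^{200[F:\Q]n^3}$. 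The problem is thus reduced to proving the asymptotic $\psi(x,\pi\times\tilde{\pi})=x-x^{\beta_1}/\beta_1+E(x,\pi)$ with the stated uniform error $E(x,\pi)$.

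Second, I would apply the truncated Perron formula to $-L'/L(s,\pi\times\tilde{\pi})$ at height $T$, and then shift the contour from $\Re s=1+1/\log x$ to the line $\Re s=1-\eta(T)$, where
\[
\eta(T)=\frac{c_1}{\log\bigl(C(\pi)(T+3)^{n^2[F:\Q]}\bigr)}
\]
is the zero-free region parameter from \eqref{eq-rszerofree-1}. The pole of $L(s,\pi\times\tilde{\pi})$ at $s=1$ is simple, so $-L'/L$ contributes a residue equal to $x$ from $s=1$, and the only other pole possibly present in the contour is the simple zero $\beta_1$ (if it exists), which contributes $-x^{\beta_1}/\beta_1$. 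All remaining non-trivial zeros lie strictly to the left of the new contour by \eqref{eq-rszerofree-1}.

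Third, I would bound the integral on the shifted contour and the horizontal segments. Using the Hadamard factorization of $\Lambda(s,\pi\times\tilde{\pi})$ together with the conductor bound $C(\pi\times\tilde{\pi},t)\ll C(\pi)^{2n}(1+|t|)^{[F:\Q]n^2}$ from \eqref{eq-BH}, one obtains the standard estimate $|L'/L(s,\pi\times\tilde{\pi})|\ll (\log(C(\pi)(|t|+3)))^2$ uniformly in $\Re s\geq 1-\eta(T)/2$, apart from neighborhoods of zeros. The contour integral is then bounded by
\[
\ll x^{1-\eta(T)}(\log xC(\pi))^{3},
\]
while the truncation error from Perron contributes $\ll x(\log x)^2/T$ (after handling the Dirichlet coefficients through Cauchy--Schwarz and the bound \eqref{eq-Libound} for $\lambda_{\pi\times\tilde{\pi}}$). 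Finally, choosing $T=\exp(\sqrt{\log x})$ balances these two terms and produces the combined bound
\[
\ll x\exp\!\left(\frac{-c_2\log x}{\log C(\pi)+\sqrt{\log x}}\right)(\log xC(\pi))^{4}.
\]

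The main obstacle I anticipate is tracking the uniformity in $C(\pi)$ throughout the contour argument, in particular the precise $L'/L$ bound in the zero-free region. A convenient route is to work from the explicit formula connecting $L'/L(s,\pi\times\tilde{\pi})$ to a sum over its non-trivial zeros, combined with the Landau-type lemma counting zeros in unit boxes (which is standard and appears in \cite{HT-2022}); this is where the weight $(\log xC(\pi))^{4}$ arises. Once these analytic inputs are secured, the choice of $T$ is essentially dictated by the zero-free region shape and leads directly to the stated quantitative error term.

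\end{proofsection}
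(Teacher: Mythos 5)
Your overall architecture matches the paper's: reduce to the von Mangoldt weighted sum via Theorem \ref{thm-powercontri}, then run a truncated Perron formula and a contour shift against the zero-free region \eqref{eq-rszerofree-1}, picking up the pole at $s=1$ and the possible exceptional zero, and finally balance with $T=\exp(c\sqrt{\log x})$. The genuine gap is in your treatment of the Perron truncation error. That error contains short-interval sums such as $\sum_{x<\N\kn\leq x+x/T}a_{\pi\times\tilde{\pi}}(\kn)\Lambda_F(\kn)$ (and dyadic variants over $|\N\kn-x|\sim x/2^j$), each of which must be shown to be $\ll x/T$. Cauchy--Schwarz would require an $\ell^2$ bound of the shape $\sum_{\N\kn\leq x}|a_{\pi\times\tilde{\pi}}(\kn)|^2\Lambda_F(\kn)\ll x\log C(\pi)$, which is a fourth-moment statement about $a_\pi$ that is not available (the paper explicitly flags exactly this as an unproved ``strong hypothesis'' in the discussion preceding Theorem \ref{thm-powercontri}); and \eqref{eq-Libound} is a long-sum bound on $\lambda_{\pi\times\tilde{\pi}}$, so by positivity it only gives $\ll C(\pi)^{\varepsilon}x\log x$ for the interval $(x,x+x/T]$, far larger than the needed $x/T$. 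Hence the claimed bound $\ll x(\log x)^2/T$ does not follow from the tools you invoke.

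The paper closes this by citing the Brun--Titchmarsh type bound of Humphries--Thorner, \cite[Proposition 4.1]{HT-2022}, namely $\sum_{u<\N\kn\leq u+h}a_{\pi\times\tilde{\pi}}(\kn)\Lambda_F(\kn)\ll h$ for $u\gg C(\pi)^{64[F:\Q]n^3}$ and $u^{1-\frac{1}{16n^2[F:\Q]}}\ll h\leq u$; applied to the interval near $x$ and, after a dyadic decomposition, to the range $\N\kn\leq x/T$, it yields the required $O(x/T)$. This is also the step that actually forces the hypothesis $x\gg C(\pi)^{200[F:\Q]n^3}$ --- in your sketch that condition is only used to absorb the (much smaller) error from Theorem \ref{thm-powercontri}, which is a symptom of the missing input. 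With \cite[Proposition 4.1]{HT-2022} inserted at this point, the rest of your argument goes through as written.
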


\begin{proof}
By Theorem \ref{thm-powercontri}, the sum over primes can be replaced by the sum over prime powers weighted by the von Mangoldt function, up to a negligible error term. It suffices to estimate 
$S=\sum_{\N\kn\leq x} a_{\pi\times \tilde{\pi}}(\kn)\Lambda_F(\kn)$.
Following the standard method of \cite[Theorem 5.13]{IK-2004} and using the zero-free region \eqref{eq-rszerofree-1}, one obtains 
\begin{equation}\label{eq-apply513}
\begin{aligned}
S=&x-\frac{x^{\beta_1}}{\beta_1}+O\bigg(x\exp\Big(-\frac{c_{3}\log x}{\log C(\pi)+\log T}\Big)(\log x C(\pi))^{4}\bigg)\\
&+O\Big(\sum_{\N\kn\leq x/T}|a_{\pi\times\tilde{\pi}}(\kn)|\Lambda_F(\kn)\Big)+O\Big(\sum_{x<\N\kn\leq x+x/T}|a_{\pi\times\tilde{\pi}}(\kn)|\Lambda_F(\kn)\Big),
\end{aligned}
\end{equation}
where $c_{3}=c_{1}/3$, and $1\leq T\leq \exp\big(\frac{1}{2}\sqrt{\log x}\big)$ is a parameter to be chosen.

By \cite[Proposition 4.1]{HT-2022}, one has 
\begin{equation}\label{eq-HT-short}
    \sum_{u<\N\kn\leq u+h}a_{\pi\times\tilde{\pi}}(\kn)\Lambda_F(\kn)\ll h
\end{equation}
for any $u\gg C(\pi)^{64[F:\Q]n^3}$ and $u^{1-\frac{1}{16n^2[F:\Q]}}\ll h \leq u$. Thus, one has
\[
\sum_{x<\N\kn\leq x+x/T}a_{\pi\times\tilde{\pi}}(\kn)\Lambda_F(\kn)\ll \frac{x}{T}
\]
for any $x\gg C(\pi)^{100[F:\Q]n^3}$. Applying a dyadic decomposition and the estimate \eqref{eq-HT-short}, one also has
\[
\sum_{\N\kn\leq x/T}a_{\pi\times\tilde{\pi}}(\kn)\Lambda_F(\kn)=\sum_{\sqrt{x/T}<\N\kn\leq x/T}a_{\pi\times\tilde{\pi}}(\kn)\Lambda_F(\kn)+O(\sqrt{x/T}C(\pi)^{\varepsilon})\ll x/T.
\]
for any $x\gg C(\pi)^{200[F:\Q]n^3}$. The error terms on the second line in \eqref{eq-apply513} can be bounded by $O(x/T)$ for $x\gg C(\pi)^{200[F:\Q]n^3}$. On taking $T=\exp\big(\frac{1}{2}\sqrt{\log x}\big)$, this theorem follows.
\end{proof}

\begin{theorem}
 Let $\pi\in \mathfrak{F}_{n}^{\flat}$ and $\pi'\in \mathfrak{F}_{n'}^{\flat}$ with $\pi'\neq \tilde{\pi}$. Then for $x\geq \exp(\left(C(\pi) C\left(\pi'\right)\right)^{2(n+n')^2})$, we have 
\[
\sum_{\N\kp\leq x} a_{\pi\times \pi'}(\kp)\log \N\kp \ll \frac{x}{(\log x)^{\frac{1}{n n'}}},
\]
where the implied constant depends only on $n,n'$ and $[F:\Q]$.
\end{theorem}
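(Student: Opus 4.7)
The plan is to reduce the sum over primes to the sum over prime ideal powers via Theorem \ref{thm-powercontri}, and then estimate the resulting von Mangoldt sum by classical complex-analytic techniques applied to $L(s,\pi\times\pi')$. The key structural input is that $\pi\in\mathfrak{F}_n^{\flat}$, $\pi'\in\mathfrak{F}_{n'}^{\flat}$ and $\pi'\neq\tilde{\pi}$ together force $L(s,\pi\times\pi')$ to be entire. Applying Theorem \ref{thm-powercontri} with $y=x$ gives
\[
\sum_{\N\kp\leq x}a_{\pi\times\pi'}(\kp)\log\N\kp=\psi(x,\pi\times\pi')+O\bigl(x^{1-1/(\max\{n,n'\}^2+1)+\varepsilon}(C(\pi)C(\pi'))^{\varepsilon}\bigr),
\]
where $\psi(x,\pi\times\pi'):=\sum_{\N\kn\leq x}a_{\pi\times\pi'}(\kn)\Lambda_F(\kn)$. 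The hypothesis $x\geq\exp\bigl((C(\pi)C(\pi'))^{2(n+n')^2}\bigr)$ turns $(C(\pi)C(\pi'))^{\varepsilon}$ into a fractional power of $\log x$, so this error is comfortably absorbed into the target bound. It therefore suffices to establish $\psi(x,\pi\times\pi')\ll x/(\log x)^{1/(nn')}$.

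Next, I would invoke Perron's formula at $c=1+1/\log x$ with truncation height $T$, yielding
\[
\psi(x,\pi\times\pi')=-\frac{1}{2\pi i}\int_{c-iT}^{c+iT}\frac{L'}{L}(s,\pi\times\pi')\frac{x^s}{s}\,ds+O\!\left(\frac{x(\log x)^{O(1)}}{T}\right).
\]
Since $L(s,\pi\times\pi')$ is entire, the contour can be shifted to $\Re s=1-\eta$ without encountering a pole. The needed zero-free region, of the form
\[
\Re s\geq 1-\frac{c_{1}}{\log\bigl(C(\pi\times\pi')(|\Im s|+3)\bigr)},
\]
is obtained by the Hoffstein--Ramakrishnan positivity argument applied to the auxiliary product
\[
\zeta_F(s)\,L(s,\pi\times\tilde{\pi})\,L(s,\pi'\times\tilde{\pi}')\,L(s,\pi\times\tilde{\pi}')\,L(s,\tilde{\pi}\times\pi'),
\]
whose Dirichlet coefficients are nonnegative. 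Inside the region, a standard Borel--Carath\'eodory argument supplies $|L'/L(s,\pi\times\pi')|\ll\log\bigl(C(\pi\times\pi')(|t|+3)\bigr)$.

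Taking $\eta=c_{2}/\log(C(\pi\times\pi')T)$ and $T=(\log x)^{A}$ with $A$ to be optimized, and controlling the horizontal pieces by the convexity bound of Lemma \ref{lem-convexity} (where the degree $nn'$ enters through the factor $(|t|+3)^{[F:\Q]nn'}$ in \eqref{eq-BH}), one obtains an estimate of the shape
\[
\psi(x,\pi\times\pi')\ll x^{1-\eta}(\log x)^{O(1)}+\frac{x(\log x)^{O(1)}}{T}.
\]
Under the hypothesis on $x$, \eqref{eq-BH} gives $\log C(\pi\times\pi')\ll(n+n')(\log x)^{1/(2(n+n')^2)}$, which is negligible compared to $\log T$; then $\eta\log x$ grows like a fractional power of $\log x$, so the shifted-contour term decays super-polynomially and the bottleneck is the Perron error $x/T$. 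Balancing $A$ against the degree $nn'$ that governs the permissible range of $T$ through the convexity factor produces the claimed bound with exponent $1/(nn')$.

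The main obstacle in the earlier work of Wu--Ye was precisely the control of the higher prime power contribution, which they could handle only under Hypothesis H, thereby restricting their unconditional result to $n,n'\leq 4$. Our Theorem \ref{thm-powercontri}, made possible by Theorem \ref{thm-main}, removes this obstruction in arbitrary degree and over any number field, so the remaining steps consist only of carefully tracking the conductor uniformity in the zero-free region, the $L'/L$ estimate, and the convexity bound. This bookkeeping, while somewhat technical, follows the classical template for the prime number theorem for $L$-functions and introduces no essentially new analytic difficulty under the lower bound $x\geq\exp\bigl((C(\pi)C(\pi'))^{2(n+n')^2}\bigr)$.
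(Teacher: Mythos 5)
Your reduction to the von Mangoldt sum via Theorem \ref{thm-powercontri} and the absorption of its error term under the hypothesis on $x$ match the paper exactly. The proof breaks down, however, at the zero-free region. You assert a classical de la Vall\'ee Poussin--type region
\[
\Re s\geq 1-\frac{c_1}{\log\bigl(C(\pi\times\pi')(|\Im s|+3)\bigr)}
\]
for $L(s,\pi\times\pi')$ with neither factor self-dual, derived from a positivity argument. No such region is known: this is a well-known open problem precisely because the Goldfeld--Hoffstein--Lieman/Hoffstein--Ramakrishnan positivity machinery fails here. Your auxiliary product $\zeta_F(s)L(s,\pi\times\tilde{\pi})L(s,\pi'\times\tilde{\pi}')L(s,\pi\times\tilde{\pi}')L(s,\tilde{\pi}\times\pi')$ does have nonnegative coefficients, but its last two factors are $L(s,\pi\times\tilde{\pi}')$ and its conjugate, not $L(s,\pi\times\pi')$, so it does not even see the zeros you want to exclude; and after correcting this, a hypothetical zero contributes with multiplicity two against a pole of order (at most) three at $s=1$, which yields no contradiction in the standard inequality, while complex zeros would additionally require twisting by $|\cdot|^{i\gamma}$, which destroys the nonnegativity structure. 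If your claimed region were available, the theorem would in fact give the much stronger error $x\exp(-c\sqrt{\log x})$, not $x/(\log x)^{1/(nn')}$ --- the weakness of the stated conclusion is itself the tell.

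The paper instead uses the far narrower zero-free region of Brumley (appendix to Lapid) and Kaneko--Thorner,
\[
\Re(s)\geq 1-\frac{c_4}{\bigl((C(\pi)C(\pi'))^{n+n'}(3+|t|)^{nn'}\bigr)^{1-\frac{1}{n+n'}+\frac{\varepsilon}{2}}},
\]
whose width decays \emph{polynomially} in $|t|$. Feeding this into the standard explicit-formula argument of \cite[Theorem 5.13]{IK-2004} gives
$x\exp\bigl(-c_5\log x/((C(\pi)C(\pi'))^{n+n'}T^{nn'})^{1-\frac{1}{n+n'}+\frac{\varepsilon}{2}}\bigr)(\log xC(\pi)C(\pi'))^4+x/T$, and the exponent $1/(nn')$ in the theorem arises because one is forced to take $T=(\log x)^{1/(nn')}$ so that $T^{nn'}=\log x$ keeps the exponential term decaying (super-polynomially in $\log x$, thanks to the lower bound on $x$); the bottleneck is then the $x/T$ term. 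Your closing paragraph attributes the exponent $1/(nn')$ to a convexity-bound bookkeeping in the horizontal contours, which is not where it comes from. To repair the proof, replace your zero-free region by the Brumley/Kaneko--Thorner one and redo the optimization of $T$ accordingly; the rest of your outline then goes through.
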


\begin{proof}
By Theorem \ref{thm-powercontri}, it suffices to bound the sum over prime powers. 
The necessary zero-free region is established in the work such as \cite[Theorem A.1]{Lapid-2013} and \cite[Proposition 5.2]{KT-2025}.
    For any $\varepsilon>0$, there exists a constant $c_{4}=c_{4}(n,n',[F:\Q],\varepsilon)>0$ such that $L\left(s, \pi \times \pi'\right) \neq 0$ in the region
$$
\operatorname{Re}(s) \geq 1-\frac{c_{4}}{\left(\left(C(\pi) C\left(\pi'\right)\right)^{n+n'}(3+|t|)^{nn'}\right)^{1-\frac{1}{n+n'}+\frac{\varepsilon}{2}}}.
$$
We proceed as in \cite[Theorem 5.13]{IK-2004} and give the bound
\begin{equation*}
\sum_{\N\kn\leq x} a_{\pi\times \pi'}(\kn)\Lambda_F(\kn)\ll x\exp\Big(-\frac{c_5\log x}{\big(\left(C(\pi) C(\pi')\right)^{n+n'}T^{nn'}\big)^{1-\frac{1}{n+n'}+\frac{\varepsilon}{2}}}\Big)\big(\log x C(\pi)C(\pi')\big)^{4}+\frac{x}{T},
\end{equation*}
where $c_5=c_4/3$, and $1\leq T\leq \exp\big(\frac{1}{2}\sqrt{\log x}\big)$ is a parameter. 

We choose $T=(\log x)^{\frac{1}{n n'}}$ and set $\varepsilon=\frac{1}{n+n'}$ to simplify the exponent. The condition $x\geq \exp(\left(C(\pi) C\left(\pi'\right)\right)^{2(n+n')^2})$ ensures that  the first term is bounded by $O(x(\log x)^{-A})$ for any $A>0$ and is thus negligible. This theorem now follows by an application of Theorem \ref{thm-powercontri}.
\end{proof}


\subsubsection*{\textbf{Fixed representations}}
We conclude with the prime number theorem for fixed representations. In this setting, the error terms take a more familiar shape. The following theorem is derived in part by specializing the uniform estimates above, and in part by applying zero-free regions tailored specifically to fixed representations.

\begin{theorem}\label{cor-PNT3}
 Fix $\pi\in \mathfrak{F}_{n},\pi'\in \mathfrak{F}_{n'}$. We have
\[
\sum_{\N\kp\leq x} a_{\pi\times \pi'}(\kp)\log \N\kp=\begin{cases}
		\displaystyle\frac{x^{1+iu}}{1+iu}+O\left(x \exp\left(-b_{1}\sqrt{\log x}\right)\right), & \text{if } \pi' =\tilde{\pi}\otimes |\cdot|^{iu} \text{ for } u\in \mathbb{R}, \\
		\\
		O\left(\displaystyle\frac{x}{(\log x)^A}\right) \text{ for any } A > 0, & \text{if } \pi'\not\cong \tilde{\pi},
	\end{cases}
\]
where the constant $b_{1}$ and the implied constants depend on $\pi$ and $\pi'$.
\end{theorem}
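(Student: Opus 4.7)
\begin{proofsection}{Proof plan for Theorem~\ref{cor-PNT3}}

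The strategy is to first reduce the sum over prime ideals to the von Mangoldt weighted sum by Theorem~\ref{thm-powercontri}, and then analyze the latter by a classical Perron contour shift, exploiting the fact that for fixed $\pi,\pi'$ the analytic conductor $C(\pi\times\pi')$ is $O(1)$, so that a standard qualitative zero-free region for $L(s,\pi\times\pi')$ together with the absence of Landau--Siegel zeros is available from the literature.

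The first step is to apply Theorem~\ref{thm-powercontri} with $y=x$, which gives
\[
\sum_{\N\kp\leq x} a_{\pi\times\pi'}(\kp)\log\N\kp \;=\; \sum_{\N\kn\leq x} a_{\pi\times\pi'}(\kn)\Lambda_F(\kn)\;+\;O\bigl(x^{1-1/(\max\{n,n'\}^2+1)+\varepsilon}\bigr).
\]
Since $\pi,\pi'$ are fixed, the power-saving error is absorbed into either of the two error terms in the statement, so I only need to analyze the von Mangoldt weighted sum on the right-hand side.

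For this sum I would apply Perron's formula together with the smoothing device used in the proof of Theorem~\ref{cor-uniformPNT} (or the template of \cite[Theorem 5.13]{IK-2004}) and shift the contour to the left of the line $\Re s=1$. For fixed $\pi,\pi'$ the classical log-type zero-free region
\[
\Re(s)\;\geq\; 1-\frac{c(\pi,\pi')}{\log(|\Im s|+3)}
\]
holds for $L(s,\pi\times\pi')$, so the zero-sum arising from the contour shift contributes the standard saving $O\bigl(x\exp(-b_1\sqrt{\log x})\bigr)$. In Case A, where $\pi' = \tilde{\pi}\otimes|\cdot|^{iu}$, one has $L(s,\pi\times\pi') = L(s-iu,\pi\times\tilde\pi)$, which has a simple pole at $s=1+iu$; the residue of $-\tfrac{L'}{L}(s,\pi\times\pi')\,\tfrac{x^s}{s}$ at this pole is exactly $\tfrac{x^{1+iu}}{1+iu}$, producing the advertised main term. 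In Case B, where $\pi'$ is not a unitary twist of $\tilde\pi$, the $L$-function is entire by the properties of Rankin--Selberg $L$-functions recalled in Section~2, no main term appears, and the contour shift leaves an error of shape $O(x\exp(-b\sqrt{\log x}))$, which is absorbed into $O(x/(\log x)^A)$ for any fixed $A>0$.

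The main technical point to address is a potential Landau--Siegel zero of $L(s,\pi\times\pi')$. In Case A this reduces to the self-dual Rankin--Selberg $L$-function $L(s,\pi\times\tilde\pi)$; the exceptional zero is ruled out by the Hoffstein--Ramakrishnan argument, which exploits the non-negativity of the coefficients $\lambda_{\pi\times\tilde\pi}(\kn)$. In Case B, any potential real zero of $L(s,\pi\times\pi')$ is a fixed real number $\beta<1$ depending only on $\pi,\pi'$, so its contribution is $O_{\pi,\pi'}(x^{\beta})$, which is dominated by $x/(\log x)^A$ for every $A>0$. Combining these inputs with the reduction from the first step yields the claimed asymptotics in both cases.

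\end{proofsection}
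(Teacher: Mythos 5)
Your overall skeleton matches the paper's: reduce to the von Mangoldt weighted sum via Theorem~\ref{thm-powercontri}, then run a contour shift. Case A is essentially the paper's argument (reduce to $L(s,\pi_0\times\tilde\pi_0)$ by unitary twisting and use the standard zero-free region \eqref{eq-rszerofree-1}), except that your claim that the exceptional zero is ``ruled out by the Hoffstein--Ramakrishnan argument'' is an overstatement: the non-existence of Landau--Siegel zeros for $L(s,\pi\times\tilde\pi)$ on general $\mathrm{GL}_n$ is open. What saves the day for fixed $\pi$ is Brumley's effective lower bound $1-\beta_1\gg C(\pi_0)^{-7n}$, which for a fixed representation bounds $\beta_1$ away from $1$ by a constant, so that $x^{\beta_1}/\beta_1=O(x^{1-\delta})$ is absorbed into the error term. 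That is the route the paper takes, and your Case A is repairable by the same observation.

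Case B contains a genuine gap. You assert that for fixed $\pi,\pi'$ the classical log-type zero-free region
\[
\Re(s)\geq 1-\frac{c(\pi,\pi')}{\log(|\Im s|+3)}
\]
holds for $L(s,\pi\times\pi')$. This is not known when $\pi'$ is not (a unitary twist of) $\tilde\pi$: the de la Vall\'ee Poussin argument requires an auxiliary Dirichlet series with non-negative coefficients built from $L(s+it,\pi\times\pi')$, and constructing it would require the automorphy of the functorial product $\pi\boxtimes\pi'$, which is unavailable for general $n,n'$. This is exactly why the theorem's second case carries only the error term $O(x/(\log x)^A)$ rather than $O(x\exp(-b\sqrt{\log x}))$: the paper instead invokes the recent (ineffective) zero-free region of Harcos--Thorner, $L(\sigma+it,\pi\times\pi')\neq 0$ for $\sigma\geq 1-b_2(1+|t|)^{-\varepsilon}$, and feeds that much weaker region into the standard machinery of \cite[Theorem 5.13]{IK-2004}. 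If the region you claim were available, the theorem would state the exponential saving in Case B as well; as it stands, your derivation of Case B rests on an input that does not exist in the literature, and you need to replace it with the Harcos--Thorner region (accepting the correspondingly weaker error term).
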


\begin{proof}
For the first assertion, we can write $\pi=\pi_{0}\otimes |\cdot|^{it_{\pi}}$ and $\pi'=\tilde{\pi}_{0}\otimes |\cdot|^{it_{\pi'}}$ for some $\pi_{0}\in \mathfrak{F}_{n}^{\flat}$ and $t_{\pi}, t_{\pi'} \in \R$ with $u=t_{\pi'}-t_{\pi}$. For a fixed representation $\pi_{0}$, its conductor $C(\pi_0)$ is a fixed constant. 
Brumley's estimate for a possible exceptional zero $\beta_1$ gives
    \[
    \beta_1 < 1-\frac{c_{6}}{C(\pi_0)^{7n}}
    \]
for some constant $c_6>0$. This ensures that $\beta_1$ is bounded away from $1$.
The uniform result from Theorem \ref{cor-uniformPNT} thus simplifies to
\[
\sum_{\N\kp\leq x} a_{\pi_0\times\tilde{\pi}_0}(\kp)\log \N\kp=x+O\left(x \exp(-b_{1}\sqrt{\log x})\right),
\]
where $b_{1}>0$ is a constant depending on $\pi$ and $\pi'$. By partial summation, we then get
\[
\sum_{\N\kp\leq x} a_{\pi\times \tilde{\pi}'}(\kp)\log \N\kp=\sum_{\N\kp\leq x} a_{\pi_0\times\tilde{\pi}_0}(\kp)(\N\kp)^{iu}\log \N\kp =\frac{x^{1+iu}}{1+iu}+O\left(x \exp\left(-b_{1}\sqrt{\log x}\right)\right).
\]
This completes the proof of the first assertion.

For the second assertion, according to Harcos and Thorner \cite{HT-2025}, there exists an ineffective constant $b_2=b_2(\pi,\pi',\varepsilon)>0$ such that
\[
L\left(\sigma+it, \pi \times \pi'\right) \neq 0 \text{ for } \sigma\geq 1-\frac{b_2}{(1+|t|)^\varepsilon}.
\]
Applying the standard method with this zero-free region as in \cite[Theorem 5.13]{IK-2004} yields
\[
\sum_{\N\kn\leq x} a_{\pi\times \pi'}(\kn)\Lambda_F(\kn)\ll \frac{x}{(\log x)^A}
\]
for any $A>0$ (see also \cite[Theorem 2.1]{HT-2025}). Finally, the result for the sum over primes follows from Theorem \ref{thm-powercontri}.
\end{proof}

\subsection{Selberg's orthogonality conjecture}

The Selberg orthogonality conjecture, originally proposed by Selberg \cite{Selberg-1989}, was stated for primitive $L$-functions within the axiomatic framework of the Selberg class. This conjecture asserts that the coefficients of two distinct primitive $L$-functions at prime arguments exhibit an orthogonality relation. In the context of the Langlands program, these $L$-functions are widely conjectured to correspond to irreducible automorphic representations, and the conjecture relates the arithmetic independence of $L$-functions to the non-equivalence of the associated representations. The specific form of this conjecture relevant to automorphic forms states as follows.

\begin{conjecture}\label{conj-SOC}
For cuspidal automorphic representations $\pi\in\mathfrak{F}_n$ and $\pi'\in\mathfrak{F}_{n'}$, one has
\[
\sum_{\N\kp \leq x} \frac{a_\pi(\kp) a_{\pi'}(\kp)}{\N\kp} = \delta_{\pi, \pi'} \log \log x + O_{\pi,\pi'}(1), \quad \text{as } x \to \infty,
\]
where $\delta_{\pi, \pi'}= 1$ if $\pi'=\tilde{\pi}$, and $\delta_{\pi, \pi'}= 0$ otherwise.
\end{conjecture}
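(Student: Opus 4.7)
\begin{proofsection}{Proof Proposal}
The plan is to deduce the Selberg orthogonality conjecture from the prime number theorem for Rankin--Selberg $L$-functions, Theorem~\ref{cor-PNT3}, by a standard Abel summation, after first replacing the product $a_{\pi}(\kp) a_{\pi'}(\kp)$ by the Rankin--Selberg coefficient $a_{\pi\times\pi'}(\kp)$.

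First, I would show that at every prime $\kp\nmid\kq_\pi\kq_{\pi'}$ the identity \eqref{eq-RS-unramified} of unramified Satake parameters gives $a_{\pi}(\kp) a_{\pi'}(\kp)=a_{\pi\times\pi'}(\kp)$. At the finitely many ramified primes, the Luo--Rudnick--Sarnak bound \eqref{eq-LRS-finite} yields $|a_{\pi}(\kp) a_{\pi'}(\kp)|/\N\kp \ll_{n,n'} \N\kp^{\delta_n+\delta_{n'}-1}$, and summing over these finitely many primes contributes only $O_{\pi,\pi'}(1)$. So it suffices to prove
\[
T(x):=\sum_{\N\kp\leq x}\frac{a_{\pi\times\pi'}(\kp)}{\N\kp} = \delta_{\pi,\pi'}\log\log x + O_{\pi,\pi'}(1).
\]

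Next, setting $\psi_{\pi\times\pi'}(x):=\sum_{\N\kp\leq x}a_{\pi\times\pi'}(\kp)\log\N\kp$, Abel summation gives
\[
T(x)=\frac{\psi_{\pi\times\pi'}(x)}{x\log x}+\int_{2}^{x}\psi_{\pi\times\pi'}(t)\,\frac{\log t+1}{t^{2}\log^{2}t}\,\dd t + O(1).
\]
In the case $\pi'=\tilde{\pi}$, Theorem~\ref{cor-PNT3} inserts $\psi_{\pi\times\pi'}(t)=t+O(t\exp(-b_{1}\sqrt{\log t}))$, and the identity $\int_{2}^{x}(\log t+1)\,\dd t/(t\log^{2}t) = \log\log x + O(1)$ produces the main term; the error contribution converges under the substitution $s=\log t$, since $\exp(-b_{1}\sqrt{s})/s$ is integrable at infinity. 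In the twisted case $\pi'=\tilde{\pi}\otimes|\cdot|^{iu}$ with $u\neq 0$, the main term of $\psi_{\pi\times\pi'}$ is $t^{1+iu}/(1+iu)$, and one integration by parts on $\int e^{ius}/s\,\dd s$ (after $s=\log t$) shows that the resulting integral is $O(1/|u|)$, which is absorbed into $O_{\pi,\pi'}(1)$. In the remaining case $\pi'\not\cong\tilde{\pi}$, Theorem~\ref{cor-PNT3} gives $\psi_{\pi\times\pi'}(t)\ll t/(\log t)^{A}$ for any $A>0$; taking $A=2$ makes both the boundary term and the integral convergent, giving $T(x)=O_{\pi,\pi'}(1)$.

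The only delicate point is the twisted case $u\neq 0$: one must verify that the oscillation of $t^{iu}$ genuinely cancels and does not contribute a logarithmic divergence. This is classical, amounting to the boundedness of the complex exponential integral, but it is the sole place where the implied constant is forced to depend on $u$ and hence on $\pi,\pi'$. Outside this point the argument is routine given the strong prime number theorem of Theorem~\ref{cor-PNT3}, whose error term $x\exp(-b_{1}\sqrt{\log x})$ is considerably stronger than the $O_{\pi,\pi'}(1)$ required by the conjecture.
\end{proofsection}
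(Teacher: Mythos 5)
Your proposal is correct and follows essentially the same route as the paper, which deduces the theorem by partial summation from the prime number theorem of Theorem~\ref{cor-PNT3}; your preliminary reduction from $a_\pi(\kp)a_{\pi'}(\kp)$ to $a_{\pi\times\pi'}(\kp)$ via \eqref{eq-RS-unramified} at unramified primes, with the finitely many ramified primes absorbed into $O_{\pi,\pi'}(1)$, is exactly the step the paper leaves implicit. The case analysis, including the integration by parts controlling the oscillatory integral when $u\neq 0$, matches the paper's stated refinement.
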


Significant progress has been made toward this conjecture for $\mathrm{GL}_n$ over $\Q$. Assuming Hypothesis H, Rudnick and Sarnak \cite{RS-1996} established the diagonal case $\pi = \pi'$. In a major advance on the off-diagonal case, Liu and Ye \cite{LY-2005} were the first to obtain a result with a strong error term of the form  $O\left(x \exp\left(-b\sqrt{\log x}\right)\right)$ for some $b>0$, but their work was conditional on the GRC and also assumed that at least one of the representations was self-contragredient. In subsequent work, Liu, Wang, and Ye \cite{LWY-2005} succeeded in removing the GRC, replacing it with the weaker Hypothesis H; this important generalization came at the cost of the error term, which was weakened to $O(1)$. Later, this self-contragredient condition was removed by Avdispahi\'c and Smajlovi\'c \cite{AS-2010}. Their proof was still conditional on Hypothesis H, and yielded an error term of only $O(1)$. 

Now Hypothesis H has been established unconditionally for general number fields $F$, we prove Selberg's orthogonality conjecture in full generality for automorphic representations over $F$. A key contribution of our work is achieving strong error estimates for all cases over any number field. In fact, by applying partial summation to the prime number theorem in Theorem \ref{cor-PNT3}, we obtain a more precise version with stronger error estimates.


\begin{theorem}
Conjecture~\ref{conj-SOC} holds unconditionally. More precisely, for any cuspidal automorphic representations $\pi\in\mathfrak{F}_n$ and $\pi'\in\mathfrak{F}_{n'}$, we have
\[
\sum_{\mathrm{N}\mathfrak{p} \leq x}\frac{a_\pi(\mathfrak{p}) a_{\pi'}(\mathfrak{p})}{\mathrm{N}\mathfrak{p}} =
\begin{cases}
    \log\log x + b_3 + O\left(\exp\left(-b_4\sqrt{\log x}\right)\right), \\
    \\
    -\displaystyle\int_{x}^{\infty}\frac{y^{iu-1}}{\log y}\,\mathrm{d}y + b_5 + O\left(\exp\left(-b_6\sqrt{\log x}\right)\right), \\
    \\
    b_7 + O\left((\log x)^{-A}\right) \text{ for any } A > 0,
\end{cases}
\]
where the cases correspond to
\begin{enumerate}
    \itemsep=0pt
    \item[(1)] $\pi' = \tilde{\pi}$,
    \item[(2)] $\pi' = \tilde{\pi}\otimes |\cdot|^{iu}$ for $u\in \mathbb{R}\setminus\{0\}$,
    \item[(3)] $\pi' \not\cong \tilde{\pi}$.
\end{enumerate}
Here, $b_4, b_6$ are positive constants, and all constants $b_j$ depend on $\pi, \pi'$.
\end{theorem}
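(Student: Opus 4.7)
\begin{proofsection}{Proof plan}
The plan is to deduce each of the three asymptotic formulas from Theorem~\ref{cor-PNT3} via partial summation. A preliminary reduction replaces $a_\pi(\kp)a_{\pi'}(\kp)$ by $a_{\pi\times\pi'}(\kp)$: the relation~\eqref{eq-RS-unramified} gives $a_{\pi\times\pi'}(\kp)=a_\pi(\kp)a_{\pi'}(\kp)$ at every prime $\kp$ unramified for both $\pi$ and $\pi'$, while at the finitely many ramified primes the bound \eqref{eq-LRS-finite} makes the discrepancy a finite sum that is $O_{\pi,\pi'}(1)$; this is absorbed into the constants $b_3$, $b_5$, $b_7$. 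It therefore suffices to analyze the sum with $a_{\pi\times\pi'}(\kp)$ in place of $a_\pi(\kp)a_{\pi'}(\kp)$.

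Set $T(y):=\sum_{\N\kp\le y} a_{\pi\times\pi'}(\kp)\log\N\kp$ and $f(y):=1/(y\log y)$. Abel summation yields
\[
\sum_{\N\kp\le x} \frac{a_{\pi\times\pi'}(\kp)}{\N\kp} = T(x)f(x) - \int_2^x T(y) f'(y)\,\dd y.
\]
In cases (1) and (2), write $T=T_0+E$, where $T_0$ is the smooth main term from Theorem~\ref{cor-PNT3} and $E$ is the error. Integration by parts applied to the smooth piece produces the clean identity
\[
T_0(x)f(x) - \int_2^x T_0(y) f'(y)\,\dd y = T_0(2)f(2) + \int_2^x T_0'(y) f(y)\,\dd y,
\]
in which the boundary term $T_0(x)f(x)$ cancels exactly. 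For case (1) one has $T_0'(y)=1$, and the remaining integral evaluates to $\log\log x + O(1)$. For case (2) one has $T_0'(y)=y^{iu}$ with $u\ne 0$; a further integration by parts shows that $I(\infty):=\int_2^\infty y^{iu-1}/\log y\,\dd y$ converges, and writing $\int_2^x = I(\infty) - \int_x^\infty$ isolates the desired main term $-\int_x^\infty y^{iu-1}/\log y\,\dd y$, with $I(\infty)$ absorbed into $b_5$.

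For the error contribution in cases (1) and (2), the estimate $|E(y)|\ll y\exp(-b_1\sqrt{\log y})$ from Theorem~\ref{cor-PNT3} gives $|E(x)f(x)|\ll\exp(-b_1\sqrt{\log x})/\log x$ and $|E(y)f'(y)|\ll\exp(-b_1\sqrt{\log y})/(y\log y)$. The substitution $v=\sqrt{\log y}$ transforms $\int_2^x\exp(-b_1\sqrt{\log y})/(y\log y)\,\dd y$ into $2\int_{\sqrt{\log 2}}^{\sqrt{\log x}} e^{-b_1 v}/v\,\dd v$, which converges at infinity and whose tail is $O(\exp(-b_1\sqrt{\log x})/\sqrt{\log x})$; the limiting constant folds into $b_3$ or $b_5$, and the residual error survives as $O(\exp(-b_4\sqrt{\log x}))$ for any $b_4<b_1$. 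For case (3), Theorem~\ref{cor-PNT3} applied with a sufficiently large exponent yields $|T(y)|\ll y/(\log y)^{A+2}$, so the integrand $T(y)f'(y)$ is absolutely integrable on $[2,\infty)$; the boundary term $T(x)f(x)$ and the tail $\int_x^\infty$ each contribute $O((\log x)^{-A})$, producing the third asymptotic with $b_7$ equal to the convergent principal value of the relevant integral.

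The overall argument is technically routine once Theorem~\ref{cor-PNT3} is available; the only delicate point is preserving the exponential saving $\exp(-b\sqrt{\log x})$ through partial summation against a weight $1/\log y$ that varies very slowly, which is accomplished by the substitution $v=\sqrt{\log y}$. All the genuine analytic difficulty has been absorbed upstream, into Theorem~\ref{thm-powercontri} for the removal of higher prime-power contributions, and ultimately into the unconditional resolution of Hypothesis H in Theorem~\ref{thm-HH}.
\end{proofsection}
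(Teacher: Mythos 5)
Your proposal is correct and takes the same route as the paper, whose proof of this theorem consists precisely of the remark that it follows ``by applying partial summation to the prime number theorem in Theorem~\ref{cor-PNT3}.'' Your additional details --- the reduction from $a_\pi(\kp)a_{\pi'}(\kp)$ to $a_{\pi\times\pi'}(\kp)$ via \eqref{eq-RS-unramified} with the finitely many ramified primes absorbed into the constants, and the substitution $v=\sqrt{\log y}$ to preserve the exponential saving through the $1/\log y$ weight --- correctly fill in the steps the paper leaves implicit.
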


\begin{remark} The integral term in the second case has the following asymptotic expansion by repeated integration by parts
    \[
    -\int_{x}^{\infty}\frac{y^{iu-1}}{\log y} \dd y=-\int_{\log x}^{\infty}\frac{e^{iuv}}{v} \dd v=\frac{x^{iu}}{iu\log x} \sum_{k=0}^{M} \frac{k!}{(iu \log x)^k}+O\big((\log x)^{-M-2}\big),
    \]
    which holds for any fixed integer $M>0$.
\end{remark}

\vskip 5mm

\subsection{Hoheisel's prime number theorem}

Hoheisel's classical theorem \cite{Hoheisel-1930} asserts that there exists a constant $\delta>0$ such that
\[
\sum_{x < p \leq x + y} \log p = (1 + o(1)) y
\]
uniformly for $x^{1 - \delta} \leq y \leq x$. The proof crucially relies on two ingredients. One is the zero-free region for the Riemann zeta function $\zeta(s)$ of the form
    \begin{equation}\label{eq-zerofree-L}
        \sigma > 1 - c \frac{\log \log (|t| + 3)}{\log (|t| + 3)},
    \end{equation}
originally due to Littlewood, and the other is the zero density estimate
    \[
    N(\alpha, T) := \#\left\{ \rho = \beta + i\gamma : \zeta(\rho) = 0,\ \beta > \alpha,\ |\gamma| \leq T \right\} \ll T^{4\alpha(1 - \alpha)}(\log T)^{13},
    \]
established by Hoheisel himself, where $c > 0$ is an absolute and effective constant.

When considering the automorphic analogues, such as the standard $L$-function $L(s,\pi)$ for $\pi \in \mathfrak{F}_n$, or its Rankin--Selberg convolution $L(s,\pi \times \tilde{\pi})$, a major obstacle arises: no analogue of the zero-free region~\eqref{eq-zerofree-L} is currently known for such $L$-functions. To overcome this, one must resort to log-free zero density estimates, which can partially compensate for the absence of this kind of zero-free regions.

Such a log-free zero density estimate was previously developed by Soundararajan and Thorner \cite{ST-2019}. More recently, Humphries and Thorner \cite{HT-2022} extended these ideas to the setting of number fields, obtaining a uniform log-free zero density estimate for Rankin--Selberg $L$-functions associated to $\pi$ and $\tilde{\pi}$ over an arbitrary number field. As an application, they established the following Hoheisel-type result: Let $\pi\in\mathfrak{F}_n$. Let $A>c_7$. For $x \geq C(\pi)^{c_8 A^2}$ and $x^{1 - A^{-1}} \leq h \leq x$,
	\begin{equation}\label{eq-hoheisel-GLn}
	\sum_{x<\N\kn\leq x+h}\Lambda_{F}(\kn)a_{\pi\times\tilde{\pi}}(\kn)=\begin{cases}
		h(1-\xi^{\beta_1-1})(1+O(e^{-c_{9} A})), &\mbox{if $\beta_1$ exists,}\\
		h(1+O(e^{-c_{9} A})), &\mbox{otherwise,}
	\end{cases}
	\end{equation}
where the real number $\beta_1$ is the exceptional zero, $\xi\in[x,x+h]$ satisfies $(x+h)^{\beta_1}-x^{\beta_1}=\beta_1 h \xi^{\beta_1-1}$, the implied constant and $c_7>0$ are absolute, and the positive constants $c_8,c_9$ depend only on $n$ and $[F:\Q]$.

As in Hoheisel's original work, Humphries and Thorner aimed to obtain asymptotic formulas for sums over prime ideals. However, they pointed out that due to current limitations toward the GRH, one cannot rule out the possibility that the  contribution from prime powers is $\gg h$. Even under the assumption of Hypothesis H, it is insufficient to ensure that the contributions from higher prime powers are negligible.

Nevertheless, under Hypothesis \eqref{eq-hypotheis-BTZ}, which is known to hold for $n\leq 4$, Humphries and Thorner managed to obtain an asymptotic formula restricted to prime ideals. Specifically, they proved that for $\pi \in \mathfrak{F}_n$ with $n \leq 4$
\[
\sum_{x < \N\kp \leq x + h} |a_\pi(\kp)|^2 \log \N\kp =
\begin{cases}
	h(1 - \xi^{\beta_1 - 1})(1 + O(e^{-c_9 A})), & \text{if $\beta_1$ exists}, \\
	h(1 + O(e^{-c_9 A})), & \text{otherwise}.
\end{cases}
\]

In the following, we extend this result to all $\mathrm{GL}_n$ cuspidal automorphic representations.

\begin{theorem}
   Let $\pi\in\mathfrak{F}_n$. Let $A>c_7$. For $x \geq C(\pi)^{c_8 A^2}$ and $x^{1 - A^{-1}} \leq h \leq x$, we have
    \[
\sum_{x < \N\kp \leq x + h} |a_\pi(\kp)|^2 \log\N\kp =
\begin{cases}
	h(1 - \xi^{\beta_1 - 1})(1 + O(e^{-c_9 A})), & \text{if an exceptional zero $\beta_1$ exists}, \\
	h(1 + O(e^{-c_9 A})), & \text{otherwise},
\end{cases}
\]
where $\xi \in [x, x + h]$ satisfies the identity $(x + h)^{\beta_1} - x^{\beta_1} = \beta_1 h \xi^{\beta_1 - 1}$, the implied constant and $c_7>0$ are absolute, and the positive constants $c_8,c_9$ depend only on $n$ and $[F:\Q]$.
\end{theorem}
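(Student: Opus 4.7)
The plan is to reduce the desired sum over primes to the sum over prime ideal powers $\sum_{x<\N\kn\leq x+h}\Lambda_F(\kn)a_{\pi\times\tilde\pi}(\kn)$ already treated in \eqref{eq-hoheisel-GLn}, using Theorem~\ref{thm-powercontri} and the unramified identity for Rankin--Selberg parameters. The first observation is that every prime ideal $\kp$ dividing the conductor $\kq_\pi$ has norm at most $\N\kq_\pi \leq C(\pi)$, so under the hypothesis $x \geq C(\pi)^{c_8 A^2}$ there are no ramified primes in the interval $(x, x+h]$. For each unramified $\kp$ in this interval, \eqref{eq-RS-unramified} yields
\begin{equation*}
a_{\pi\times\tilde\pi}(\kp) = \sum_{j,j'=1}^{n} \alpha_{j,\pi}(\kp)\overline{\alpha_{j',\pi}(\kp)} = \Bigl|\sum_{j=1}^n \alpha_{j,\pi}(\kp)\Bigr|^2 = |a_\pi(\kp)|^2,
\end{equation*}
so the sum we wish to estimate coincides exactly with $\sum_{x<\N\kp\leq x+h} a_{\pi\times\tilde\pi}(\kp)\log\N\kp$.

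Next, I would apply Theorem~\ref{thm-powercontri} with $\pi'=\tilde\pi$ on the interval $(x, x+h]$ to convert the prime sum into a sum over prime ideal powers, picking up the error $O(x^{1-(n^2+1)^{-1}+\varepsilon} C(\pi)^\varepsilon)$. The resulting sum $\sum_{x<\N\kn\leq x+h}\Lambda_F(\kn)a_{\pi\times\tilde\pi}(\kn)$ is then evaluated by Humphries--Thorner's formula \eqref{eq-hoheisel-GLn}, producing the asserted main term $h(1-\xi^{\beta_1-1})(1+O(e^{-c_9 A}))$ (or $h(1+O(e^{-c_9 A}))$ when no exceptional zero exists).

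The only remaining task is to verify that the auxiliary error from Theorem~\ref{thm-powercontri} is dominated by $h\,e^{-c_9 A}$. Using $h \geq x^{1-1/A}$ together with $C(\pi)^\varepsilon \leq x^{\varepsilon/(c_8 A^2)}$, this reduces to showing
\begin{equation*}
x^{-\frac{1}{n^2+1}+\frac{1}{A}+\varepsilon+\frac{\varepsilon}{c_8 A^2}} \ll e^{-c_9 A}.
\end{equation*}
After enlarging $c_7$ so that $A > 2(n^2+1)$ and taking $\varepsilon$ small, the exponent on the left is bounded above by $-c$ for some $c=c(n)>0$. The lower bound $x \geq C(\pi)^{c_8 A^2} \geq 2^{c_8 A^2}$ then forces $x^{-c} \leq \exp(-c\, c_8 A^2 \log 2)$, which is smaller than $e^{-c_9 A}$ once $c_8$ is sufficiently large relative to $c_9$.

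The main technical obstacle here is not genuine: the deep inputs (the log-free zero density estimate underlying \eqref{eq-hoheisel-GLn}, and the control on prime power Rankin--Selberg coefficients coming from Theorem~\ref{thm-main} that powers Theorem~\ref{thm-powercontri}) are both in place. What remains is a bookkeeping exercise on the constants $c_7, c_8, c_9$ to ensure that the prime power error from Theorem~\ref{thm-powercontri} is swallowed by the main error term, which is straightforward because the exponent $1-1/(n^2+1)$ is strictly smaller than $1-1/A$ when $A$ is large.
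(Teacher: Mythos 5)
Your proposal is correct and follows essentially the same route as the paper: combine the Humphries--Thorner asymptotic \eqref{eq-hoheisel-GLn} with Theorem \ref{thm-powercontri}, then identify $a_{\pi\times\tilde{\pi}}(\kp)$ with $|a_\pi(\kp)|^2$ at unramified primes (the paper instead bounds the ramified discrepancy by $\N\kq_{\pi}^{1-1/(2(n^2+1))}$, but your observation that no ramified prime lies in $(x,x+h]$ under the hypothesis $x\geq C(\pi)^{c_8A^2}$ is equally valid and cleaner). Your explicit bookkeeping --- in particular the need to take $A$ larger than roughly $n^2+1$ so that the prime-power error $x^{1-1/(n^2+1)+\varepsilon}C(\pi)^{\varepsilon}$ is absorbed into $h\,e^{-c_9A}$ --- addresses a point the paper silently glosses over; it does force $c_7$ to depend on $n$, contrary to the statement's claim that it is absolute, but this caveat applies equally to the paper's own argument.
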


\begin{proof}
    By \eqref{eq-hoheisel-GLn}, Theorem \ref{thm-powercontri} and our condition $x\geq C(\pi)^{c_{7} A^2}$, we obtain
    \[
  \sum_{x < \N\kp \leq x + h} a_{\pi \times \tilde{\pi}}(\kp) \log \N\kp =
 \begin{cases}
	h(1 - \xi^{\beta_1 - 1})(1 + O(e^{-c_4 A})), & \text{if $\beta_1$ exists}, \\
	h(1 + O(e^{-c_4 A})), & \text{otherwise}.
 \end{cases}
 \]
It remains to compare $a_{\pi \times \tilde{\pi}}(\kp)$ and $|a_\pi(\kp)|^2$. By \eqref{eq-LRS-finite} and \eqref{eq-LRS-2}, we get the difference 
    \[
  \sum_{x < \N\kp \leq x + h} (a_{\pi \times \tilde{\pi}}(\kp)-|a_\pi(\kp)|^2) \log \N\kp \ll \sum_{\kp|\kq_{\pi}} n^2 \N\kp^{1-\frac{1}{n^2+1}} \log \N\kp \ll \kq_{\pi}^{1-\frac{1}{2(n^2+1)}},
  \]
which is also negligible in view of the assumed lower bound on $x$. This completes the proof.
\end{proof}

\vskip 5mm
\textbf{Acknowledgements.} The author is grateful to Professor Jianya Liu and Guangshi L\"u for their encouragement and support. He would also like to thank Jesse Thorner, Hongbo Yin, and Zihao Wang for helpful discussions.

\vskip 5mm

\end{document}